\documentclass[12pt]{amsart}
\usepackage[all]{xy}
\usepackage{amssymb}

\title{Gorenstein dimensions and Hirsch length of groups}
\author{Ioannis Emmanouil, Konstantinos Golfis and Wei Ren}

\newtheorem{Lemma}{Lemma}[section]
\newtheorem{Proposition}[Lemma]{Proposition}
\newtheorem{Theorem}[Lemma]{Theorem}
\newtheorem{Corollary}[Lemma]{Corollary}
\newtheorem{Remark}[Lemma]{Remark}
\newtheorem{Example}[Lemma]{Example}

\begin{document}

\begin{abstract}
We study the relation between the Gorenstein homological and the Gorenstein
cohomological dimension of groups. We prove that for every module of type
$FP_\infty$ over any ring the Gorenstein projective and the Gorenstein
flat dimensions are always equal to each other. In particular, we have an
equality $\mathrm{Gcd}_kG=\mathrm{Ghd}_kG$ for every group $G$ of type
$FP_\infty$ over a commutative coefficient ring $k$. For non-locally-finite
groups in Kropholler's class ${\scriptstyle\mathbf{LH}}\mathfrak F$, we
characterize the groups of Gorenstein (co)homological dimension one, in
terms of actions on trees with finite stabilizers. The Hirsch length of 
a virtually soluble group $G$ determines its Gorenstein (co)homological
dimension, even when $G$ has torsion: We show that 
$\mathrm{Ghd}_\mathbb{Z}G =h(G)$ and, if $G$ is countable,
$h(G)\leq\mathrm{Gcd}_\mathbb{Z}G\leq h(G)+1$. Some consequences for
elementary amenable groups of finite Hirsch length are also obtained.
Finally, we investigate the Gorenstein dimension of modules over certain
algebras of groups with torsion and discuss applications to classifying
spaces for proper actions.
\end{abstract}

\subjclass{20J05, 20C07}

\medskip

\maketitle
\tableofcontents

\section{Introduction}
\label{sec:int}

\noindent
The homological and cohomological dimensions of a group $G$ over a commutative
ring $k$ satisfy the inequality $\mathrm{hd}_k G\leq \mathrm{cd}_k G$, which may
very well be strict. Understanding when these two dimensions coincide and when
a positive gap occurs is a classical theme in the homological theory of groups.
For instance, if $G$ is a Bieri--Eckmann duality group over $k$, or more generally
a group of type $FP_\infty$ over $k$, then $\mathrm{hd}_k G = \mathrm{cd}_k G$.
For soluble groups, the difference between the two dimensions is closely related
to the Hirsch length. Bieri \cite[Theorem 7.10]{Bie} proved that, if $G$ is
torsion-free and soluble, then
$\mathrm{hd}_\mathbb{Z}G=h(G)$ and $h(G)\leq \mathrm{cd}_
 \mathbb{Z}G\leq h(G)+1$.
Thus, even in a well-structured class of groups, the homological and cohomological
dimensions may not be equal.

The presence of torsion makes the classical dimensions considerably less
effective; torsion provides a fundamental obstruction to the finiteness of
homological and cohomological dimensions. This phenomenon is particularly
visible for elementary amenable groups. Kropholler and Mart\'{\i}nez-P\'{e}rez
\cite{K-MP} showed that, for an elementary amenable group $G$, the dimension
$\mathrm{hd}_kG$ is finite if and only if $G$ has finite Hirsch length and
no $k$-torsion; then, $\mathrm{hd}_kG=h(G)$. Elementary amenable groups of
finite Hirsch length, however, may contain substantial torsion. Consequently,
ordinary homological and cohomological dimensions do not always provide for 
finite invariants, that are capable of detecting the Hirsch length throughout 
this class.

Gorenstein homological algebra provides a natural framework to overcome this
obstruction. Replacing projective and flat resolutions by their Gorenstein
counterparts gives rise to the Gorenstein cohomological dimension
$\mathrm{Gcd}_k G$ and Gorenstein homological dimension $\mathrm{Ghd}_k G$.
These invariants extend the classical dimensions, while remaining meaningful
for many groups with torsion. The Gorenstein cohomological dimension is
closely related to the Bredon cohomological dimension and hence to the existence
and dimension of finite dimensional models for the classifying space
$\underline{E}G$ for proper actions, thereby connecting the theory with
questions such as Brown's conjecture. More broadly, Gorenstein projective
modules are maximal Cohen--Macaulay modules under mild conditions and have
connections with representation theory, singularity theory and commutative
algebra; see for example \cite{Buc, Chr, EJ, Hap}. It is natural to study
the extent to which the classical relationship between $\mathrm{cd}_kG$
and $\mathrm{hd}_kG$ persists in the Gorenstein setting. In particular, one
may ask whether $\mathrm{Gcd}_kG=\mathrm{Ghd}_kG$ and, if equality fails, how
large the difference between the two dimensions can be. A closely related
question is to ask whether the Hirsch length continues to determine these
dimensions for virtually soluble and elementary amenable groups, under the
presence of torsion.

We first investigate the equality between the Gorenstein homological and
the Gorenstein cohomological dimension. A general module-theoretic result 
established in Appendix~A plays an important role here: For every ring $R$ 
and every $R$-module $M$ of type $FP_\infty$, we prove in Corollary
\ref{cor:GP-PGF-GF-dim} that $\mathrm{Gpd}_R M = \mathrm{Gfd}_R M$. Thus,
even though the relation between Gorenstein projective and Gorenstein flat
modules is somehow mysterious in general, the corresponding dimensions
coincide on modules of type $FP_\infty$.  Applied to the case of the
trivial $kG$-module $k$, this result shows that
$\mathrm{Gcd}_kG=\mathrm{Ghd}_kG$ for every group $G$ of type $FP_\infty$
over a commutative ring $k$; see Theorem \ref{thm:Ghd=Gcd-FP}. In particular, 
this equality holds for virtual duality groups

The classical Stallings--Swan theorem \cite{Stal,Swan} states that 
$\mathrm{hd}_{\mathbb Z}G=\mathrm{cd}_{\mathbb Z}G=1$ if and only if 
$G$ is a free group, i.e.\ if and only if $G$ acts freely on a tree.  
For a non-locally-finite group $G$ in Kropholler's class
${\scriptstyle\mathbf{LH}}\mathfrak F$, we obtain a Gorenstein analogue:
$\mathrm{Gcd}_{\mathbb Z}G=\mathrm{Ghd}_{\mathbb Z}G=1$ if and only if
$\dim\underline{E}G=1$, i.e.\ if and only if $G$ acts on a tree with finite
stabilizers; see Proposition~\ref{prop:LHF}.

We then pass to virtually soluble groups. The Hirsch length turns out to
have a particularly transparent relationship with Gorenstein homological
dimensions. Unlike the ordinary homological dimension, the Gorenstein
homological dimension detects the Hirsch length without any torsion-free
hypothesis. We show in Theorem \ref{thm:Ghd-Hir} that, for every virtually
soluble group $G$, we have equalities
$\mathrm{Ghd}_\mathbb{Z}G = \mathrm{Ghd}_\mathbb{Q}G=
\mathrm{hd}_\mathbb{Q}G =h(G)$.
Moreover, if $G$ is an elementary amenable group of finite Hirsch length,
then the above invariants are also equal to the Bredon homological dimension
$\underline{\mathrm{hd}}G$; see Corollary \ref{cor:Ghd-Hir}. Hence, we
may reinterpret \cite[Corollary 4]{FN} as follows: If $G$ is a countable
elementary amenable group of finite Hirsch length, whose Bredon cohomolgical
dimension is not equal to 2, then $G$ admits a model for $\underline{E}G$
whose dimension is either $\mathrm{Ghd}_\mathbb{Z}G$ or $\mathrm{Ghd}_\mathbb{Z}G + 1$
(Corollary \ref{cor:Ghd-Bredon}). Moreover, for a soluble group of cardinality
$<\aleph_\omega$, Mislin's characterization \cite[Corollary 4.5]{M} for the
finiteness of the Hirsch length and the dimension of $\underline{E}G$ is
extended to the finiteness of the integral and the rational Gorenstein
homological dimensions of $G$; see Corollary \ref{cor:soluble-finite-dim}.

The corresponding cohomological picture is more subtle. For a countable
virtually soluble group $G$, we establish in Theorem \ref{thm:Gcd-Hir}
that $h(G)\leq \mathrm{Gcd}_\mathbb{Q}G=\mathrm{cd}_\mathbb{Q}G
\leq \mathrm{Gcd}_\mathbb{Z}G  \leq h(G)+1$.
Thus, the Hirsch length controls the Gorenstein cohomological dimension,
but does not determine it precisely. This distinction reflects, in the
Gorenstein setting, the classical difference between homological and
cohomological dimension of soluble groups. It also bears on Talelli's
conjecture \cite{T14}, which predicts the equality
$\mathrm{Gcd}_\mathbb{Z}G= \mathrm{Gcd}_\mathbb{Q}G$
for elementary amenable groups. Our estimates in Corollary \ref{cor:Tal}
give uniform bounds on the possible discrepancy between the integral and
rational Gorenstein cohomological dimensions and hence provide some
information regarding this question.

For virtually soluble groups, the equality between the two Gorenstein
dimensions is closely related to classical finiteness and duality
properties. Building on results by Kropholler \cite{K86},
Mart\'{\i}nez-P\'erez and Nucinkis \cite{MP-N10}, we show that the
following conditions are equivalent for a virtually soluble group $G$:
(i) $\mathrm{Gcd}_\mathbb{Z}G = \mathrm{Ghd}_\mathbb{Z}G<\infty$ and
$G$ is virtually torsion-free, (ii) $G$ is of type $FP_\infty$ over
$\mathbb{Z}$ and (iii) $G$ is a virtual duality group over $\mathbb{Z}$;
see Proposition \ref{prop:v-dual}. In particular, homological finiteness,
duality and equality of the Gorenstein dimensions meet naturally in the
virtually soluble setting. It is worth noting the {\em virtually
torsion-free} condition in this characterization, since there are many
virtually soluble groups of type $FP_\infty$ which contain non-trivial
torsion. Recall that whenever $G$ has finite virtual cohomological dimension,
$\mathrm{Gcd}_\mathbb{Z}G = \mathrm{vcd}_\mathbb{Z}G$. Hence, the equality
of Gorenstein cohomological and homological dimensions for virtually soluble
groups is closely related to Brown's conjecture on classifying spaces for
proper actions; see Corollary \ref{cor:dimEG}.

Finally, we present several applications, with particular emphasis on
Gorenstein dimensions of modules over group rings and the geometric
interpretation of these dimensions, focusing on explicit virtually soluble
groups with torsion. Finite extensions $G=H\rtimes F$ of torsion-free
soluble groups $H$ of type $FP_\infty$ furnish a broad family for which
$\mathrm{Ghd}_\mathbb{Z}G=\mathrm{Gcd}_\mathbb{Z}G=h(G)$; see Proposition
\ref{prop:eg1}. For virtually soluble groups of type $FP_\infty$, we also
obtain uniform bounds for the Gorenstein projective and Gorenstein flat
dimensions of arbitrary $\mathbb{Z}G$-modules, as well as the inequalities
$h(G) \leq \mathrm{G.wgldim} \, \mathbb{Z}G
      \leq \mathrm{G.gldim} \, \mathbb{Z}G\leq h(G)+1$;
see Corollary \ref{cor:Gpd-Gfd}. On the geometric side, the Gorenstein
dimensions coincide with the Hirsch length and the Bredon cohomological and
geometric dimensions associated with the classifying space $\underline{E}G$
for proper actions. We illustrate these phenomena first by virtually abelian
and crystallographic groups, whose standard affine actions on Euclidean spaces
provide concrete models for proper actions. We then consider finite extensions
of the soluble Baumslag--Solitar groups $BS(1,n)$, which admit a natural affine
description in terms of transformations $x\mapsto \varepsilon n^k x+b$, with
$\varepsilon = {\pm1}$ and $b\in\mathbb{Z}[1/n]$; this realization makes the
occurrence of torsion through reflections particularly transparent. The
lamplighter groups $C_p\wr\mathbb{Z}$, which are metabelian but not of type
$FP_\infty$, show that beyond the $FP_\infty$ setting the Gorenstein homological
and cohomological dimensions may no longer coincide.

The paper is organized as follows. In Section 2 we recall basic definitions
and preliminary results that are used later in the paper. In Section 3 we 
study the equality between Gorenstein cohomological and homological dimensions 
for some classes of groups. Section 4 develops the theory for virtually soluble 
groups and elementary amenable groups, relating their Gorenstein dimensions to 
the Hirsch length and comparing the integral and rational dimensions. Section 5
presents examples and develops several applications regarding Gorenstein
dimensions and classifying spaces for proper actions. Finally, in Appendix A 
we prove the equality between the Gorenstein projective and Goresntein flat
dimensions for modules of type $FP_{\infty}$ (over any ring).


\section{Preliminaries}
\label{sec:pre}

\noindent
In this section, we record certain prerequisite notions and facts that
are used throughout the paper. They concern the basic definitions on
Gorenstein projective and flat modules and the Gorenstein cohomological
and homological dimensions of groups.

\vspace{0.1in}
\noindent
{\bf I.\ Gorenstein projective and Gorenstein flat modules.}
Let $R$ be an associative unital ring. Unless otherwise stated, 
all modules considered are left $R$-modules. An acyclic complex 
of projective $R$-modules
\[\mathbf{T} = \cdots\longrightarrow T_{1}\longrightarrow T_{0}
\longrightarrow T_{-1}\longrightarrow\cdots\]
is totally acyclic, if the complex of abelian groups
$\mathrm{Hom}_R(\mathbf{T}, P)$
remains acyclic for any projective $R$-module $P$. An $R$-module is
called Gorenstein projective if it is a syzygy of a totally acyclic
complex of projective $R$-modules \cite{EJ}. The Gorenstein projective
dimension $\mathrm{Gpd}_R M$ of an $R$-module $M$ is the length of a
shortest resolution of $M$ by Gorenstein projective modules; we let
$\mathrm{Gpd}_R M = \infty$ if there is no such resolution of finite
length.

An acyclic complex of flat left $R$-modules
\[\mathbf{F} = \cdots\longrightarrow F_{1}\longrightarrow F_{0}
\longrightarrow F_{-1}\longrightarrow\cdots\]
is called a totally acyclic complex of flat modules, provided that
for any injective right $R$-module $I$, the complex remains acyclic
after applying $I\otimes_R-$. A left $R$-module $M$ is called Gorenstein
flat, if it is a syzygy of a totally acyclic complex of flat modules
\cite{EJ}. The Gorenstein flat dimension $\mathrm{Gfd}_R M$ of $M$
is the length of a shortest resolution of $M$ by Gorenstein flat
modules; we let $\mathrm{Gfd}_R M = \infty$ if there is no such
resolution of finite length.

It is clear that projective modules are flat and hence
$\mathrm{fd}_R M\leq \mathrm{pd}_R M$ for any $R$-module $M$. The
relation between Gorenstein projective and Gorenstein flat modules
is more subtle in general; we can not compare directly $\mathrm{Gpd}_R M$
and $\mathrm{Gfd}_R M$ for an arbitrary $R$-module $M$. The projectively
coresolved Gorenstein flat modules (PGF-modules, for short) were introduced
by \v{S}aroch and \v{S}t$\!$'$\!$ov\'{i}\v{c}ek in \cite{SS}; these are
the syzygies of the acyclic complexes of projective modules that remain
acyclic when applying the functor $I\otimes_R-$ for any injective right
module $I$. It is clear that PGF-modules are Gorenstein flat. As shown
in \cite[Theorem 4.4]{SS}, PGF-modules are also Gorenstein projective.

\vspace{0.1in}
\noindent
{\bf II.\ Gorenstein cohomological and homological dimensions of groups.}
Let $G$ be a group and $k$ a commutative ring. We always view $k$ as a
$kG$-module with trivial group action. Then, the Gorenstein cohomological
dimension $\mathrm{Gcd}_k G$ of $G$ over $k$ is defined as the Gorenstein
projective dimension of the $kG$-module $k$; cf.\ \cite{BDT, ET18}. The
Gorenstein homological dimension $\mathrm{Ghd}_k G$ of $G$ is defined as
the Gorenstein flat dimension of the $kG$-module $k$; cf.\ \cite{ABHS, RY}.

Let $\mathrm{silp} \, k$ and $\mathrm{spli} \, k$ denote, respectively, the
supremum of the injective dimension of projective $k$-modules and the supremum
of the projective dimension of injective $k$-modules. These invariants were
introduced by Gedrich and Gruenberg \cite{GG}. If $\mathrm{silp} \, k < \infty$,
then the following conditions are equivalent (cf.\ \cite[Corollary 1.3]{ET25}
or \cite[Theorem 2.4]{R}):\\
\indent (i) $\mathrm{Gcd}_k G < \infty$;\\
\indent (ii) every $kG$-module has finite Gorenstein projective dimension;\\
\indent (iii) there exists a $k$-split $kG$-monomorphism $k\rightarrow A$,
with $A$ a $k$-projective module
satisfying $\mathrm{pd}_{kG}A <\infty$; in this case,
$\mathrm{pd}_{kG}A = \mathrm{Gcd}_k G$.

Analogously, it follows from \cite{ET25, RY} that whenever $\mathrm{sfli} \, k$,
the supremum of the flat dimension of injective $k$-modules, is finite, then
$\mathrm{Ghd}_k G<\infty$ if and only if every $kG$-module has finite Gorenstein
flat dimension, if and only if there exists a $k$-pure $kG$-monomorphism
$\iota: k\rightarrow A$, where $A$ is $k$-flat and has finite flat dimension;
in this case, $\mathrm{fd}_{kG}A = \mathrm{Ghd}_k G$.

For any commutative ring $k$, $\mathrm{silp} \, k\leq \mathrm{spli} \, k$ with
equality if $\mathrm{spli} \, k < \infty$; cf.\ \cite[Corollary 24]{DE}. In that
case, $k$ is a Gorenstein ring. Since any $k$-split map is $k$-pure and
$\mathrm{fd}_{kG}A\leq \mathrm{pd}_{kG}A$, the characterizations above show that
for any group $G$ we have an inequality $\mathrm{Ghd}_k G\leq \mathrm{Gcd}_k G$,
provided that $k$ is a commutative Gorenstein ring. We note that this inequality
is known to hold under a more general hypothesis on $k$, namely if
$\mathrm{sfli} \, k < \infty$; cf.\ \cite[Theorem 3.10]{KS}. In the present paper
though, we shall only need the inequality when $k= \mathbb{Z}$ or $\mathbb{Q}$.


\section{Gorenstein homological and cohomological dimensions}
\label{sec:eqls}

\noindent Let $G$ be a group and $k$ a commutative coefficient ring.
Since every projective module is flat, we always have
$\mathrm{hd}_k G\leq \mathrm{cd}_k G$. Even though equality does not
hold in general, it does hold for several important classes of groups.
For example, if $G$ is of type $FP_\infty$, then
$\mathrm{hd}_k G = \mathrm{cd}_k G$. In particular, if $G$ is a
Bieri--Eckmann duality group over $k$ of dimension $n$, with dualizing
module $D$, then the duality isomorphisms
\[
\mathrm{H}^i(G; M)\cong \mathrm{H}_{n-i}(G; D\otimes_k M)
\]
imply that $\mathrm{hd}_k G = \mathrm{cd}_k G = n$.
In dimension one, the Stallings--Swan theorem \cite{Stal, Swan} asserts
that $\mathrm{cd}_\mathbb{Z} G =1$ if and only if $G$ is a free group or,
equivalently by the Bass--Serre theory, if and only if $G$ acts freely on
a tree. It is clear that we always have $\mathrm{Gcd}_k G \leq \mathrm{cd}_k G$
(respectively, $\mathrm{Ghd}_k G \leq \mathrm{hd}_k G$) with equality if
$\mathrm{cd}_k G$ (respectively, $\mathrm{hd}_k G $) is finite. However,
$\mathrm{cd}_k G$ and $\mathrm{hd}_k G$ fail to be finite if $G$ has
non-trivial $k$-torsion elements. In this section, we intent to examine
the equality between the Gorenstein cohomological and homological dimensions
of groups.

The relation between Gorenstein projective and Gorenstein flat modules is
subtle in general. However, in Appendix~A we show that the two notions
behave particularly well for modules of type $FP_\infty$ over any ring $R$;
Corollary~\ref{cor:GP-PGF-GF-dim} implies that
\[
\mathrm{Gpd}_R M
=
\mathrm{Gfd}_R M
\]
for every $R$-module $M$ of type $FP_\infty$. Applied to the special case
of the group ring $R=kG$ and the trivial module $M=k$, this result yields
the following group-theoretic consequence. We note that, under an extra 
assumption on $k$, the equality $\mathrm{Ghd}_k G=\mathrm{Gcd}_k G$ for 
groups $G$ of type $FP_\infty$ over $k$ was obtained in 
\cite[Corollary 3.4]{Ster}, using special cases of some results presented 
in Appendix A. The following result is valid over any commutative coefficient 
ring.

\begin{Theorem}\label{thm:Ghd=Gcd-FP}
Let $k$ be a commutative ring. For any group $G$ of type $FP_\infty$ over $k$,
the equality $\mathrm{Gcd}_k G =\mathrm{Ghd}_k G$ holds.
\end{Theorem}

Recall that a group is said to have virtually a given property if some subgroup
of finite index has that property. A group $G$ is a virtual duality group of
virtual dimension $n$ if it has a subgroup $N$ of finite index, such that $N$
is a duality group of cohomological dimension $n$; in that case, any torsion-free
subgroup of finite index in $G$ is a duality group of cohomological dimension
$n$. However, the absolute cohomological dimension $\mathrm{cd}_k G$ may be
infinite. Standard examples include finite extensions of surface groups and
arithmetic lattices, such as $\mathrm{GL}(n,\mathbb{Z})$. As an immediate
consequence of Theorem \ref{thm:Ghd=Gcd-FP}, vitual duality groups being 
of type $FP_\infty$, we recover the corresponding equality for such groups.

\begin{Corollary}\label{cor:Ghd=Gcd-VD}
Let $G$ be a virtual duality group over a commutative ring $k$. Then,
the equality $\mathrm{Gcd}_k G =\mathrm{Ghd}_k G$ holds.
\end{Corollary}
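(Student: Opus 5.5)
The plan is to deduce the Corollary from Theorem~\ref{thm:Ghd=Gcd-FP}. To do so, it suffices to check that a virtual duality group $G$ over $k$ is of type $FP_\infty$ over $k$. By definition, $G$ has a subgroup $N$ of finite index that is a duality group over $k$ of dimension $n$. Bieri--Eckmann duality groups over $k$ are of type $FP$ over $k$, so $N$ is of type $FP_\infty$: the trivial $kN$-module $k$ has a resolution by finitely generated projective $kN$-modules, of finite length $n$.

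The key step is passing this finiteness property from $N$ up to $G$. Let $P_\ast\to k$ be a projective resolution of the trivial $kG$-module $k$. Restricted to $N$, it is a projective resolution over $kN$, because $kG$ is a free $kN$-module of finite rank $[G:N]$. By Bieri's criterion, $k$ is of type $FP_\infty$ over $kG$ if and only if the functors $\mathrm{Ext}^i_{kG}(k,-)$ commute with direct limits, or equivalently with direct products, for all $i$. Now restriction along $kN\subseteq kG$ has exact adjoints, and because the index is finite, induction and coinduction coincide. Thus $kG\otimes_{kN}-$ preserves finitely generated projectives and also preserves products. A more direct route is the standard fact that a finitely generated $kG$-module is finitely presented exactly when its restriction to $kN$ is, and that the same holds in every degree. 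This follows by induction along the syzygies, using Schanuel's lemma together with the fact that a $kG$-module is finitely generated if and only if it is finitely generated over $kN$. Either argument gives that $G$ is of type $FP_\infty$ over $k$ if and only if $N$ is; in particular $G$ is of type $FP_\infty$.

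Once this is established, Theorem~\ref{thm:Ghd=Gcd-FP} applies directly and gives $\mathrm{Gcd}_kG=\mathrm{Ghd}_kG$. The only real content of the argument is the finite-index inheritance of $FP_\infty$. This is classical (it appears in Bieri's notes and in Brown's book), so I expect to cite it rather than reprove it. I do not expect any serious obstacle, but I will note that this inheritance fails for type $FP$ when $G$ has torsion, which is why the reduction goes through $FP_\infty$ and not through $FP$.
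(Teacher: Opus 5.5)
Your proposal is correct and follows the paper's route: the paper deduces the corollary in one line from Theorem~\ref{thm:Ghd=Gcd-FP}, using without proof that a group containing a finite-index duality subgroup is of type $FP_\infty$, which is exactly the classical finite-index inheritance you reduce to. One slip in your aside: $\mathrm{Ext}^i_{kG}(k,-)$ commutes with direct products for every module, so the product form of Bieri's criterion has to be phrased with $\mathrm{Tor}$, namely via the maps $\mathrm{H}_i(G;\prod kG)\to\prod\mathrm{H}_i(G;kG)$; this does not matter, because your direct syzygy argument via Schanuel's lemma is correct and suffices on its own.
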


Kropholler's class ${\scriptstyle\mathbf{H}}\mathfrak{F}$ is the smallest
group class containing all finite groups and all groups that act on a finite
dimensional contractible CW-complex with stabilizers already contained
in ${\scriptstyle\mathbf{H}}\mathfrak{F}$. The class
${\scriptstyle\mathbf{LH}}\mathfrak{F}$ consists of those groups whose finitely
generated subgroups lie in ${\scriptstyle\mathbf{H}}\mathfrak{F}$; we refer to
\cite{K93} for the precise definition. The following result may be viewed as a
proper and Gorenstein analogue of the Stallings--Swan theorem in dimension one.
Recall that the groups of integral cohomological dimension one are characterized
as free groups, or equivalently as groups acting freely on a tree. Under the
presence of torsion, freeness of the action is replaced by finiteness of the
stabilizers and is therefore related to the existence of a one-dimensional
model for the classifying space $\underline{E}G$ for proper $G$-actions. For
non-locally-finite ${\scriptstyle\mathbf{LH}}\mathfrak{F}$-groups, the
following result shows that this geometric condition is equivalent to having
both rational and integral Gorenstein cohomological dimension equal to one.

\begin{Proposition}\label{prop:LHF}
If $G$ is an ${\scriptstyle\mathbf{LH}}\mathfrak{F}$ group that is not locally
finite, then the following conditions are equivalent:
\begin{enumerate}
\item[(i)] $\mathrm{Gcd}_{\mathbb{Z}}G = \mathrm{Ghd}_{\mathbb{Z}}G = 1$,
\item[(ii)] $\mathrm{Gcd}_{\mathbb{Z}}G = 1$,
\item[(iii)] $\mathrm{cd}_{\mathbb{Q}}G = 1$ and
\item[(iv)] $\mathrm{dim} \, \underline{E}G = 1$, equivalently $G$ acts on a tree
with finite stabilizers.
\end{enumerate}
\end{Proposition}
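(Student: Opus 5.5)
Our plan is to prove the cycle (iv)$\Rightarrow$(i)$\Rightarrow$(ii)$\Rightarrow$(iii)$\Rightarrow$(iv). The ingredients are: the inequality $\mathrm{Ghd}_\mathbb{Z}G\leq \mathrm{Gcd}_\mathbb{Z}G$ recorded in the preliminaries; the characterization of finiteness of $\mathrm{Gcd}$ via a $k$-split monomorphism $k\to A$ with $\mathrm{pd}_{kG}A<\infty$; and known facts about groups of rational cohomological dimension one, namely Dunwoody's theorem. We also need that for a group which is not locally finite all these dimensions are at least $1$. Indeed, if $\mathrm{Ghd}_\mathbb{Z}G=0$, then $\mathbb{Z}$ is Gorenstein flat, which forces every finitely generated subgroup to be finite (e.g.\ via the known result that $\mathrm{Ghd}_kG=0$ iff $G$ is locally finite, or by restricting to an infinite finitely generated subgroup $H$, for which $H^0(H,\mathbb{Z}H)=0$ contradicts the embedding of $\mathbb{Z}$ into a flat module).

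\textbf{(iv)$\Rightarrow$(i).} If $G$ acts on a tree $T$ with finite stabilizers, the augmented cellular chain complex $0\to \mathbb{Z}[E]\to\mathbb{Z}[V]\to\mathbb{Z}\to 0$ is a resolution of $\mathbb{Z}$ by permutation modules with finite stabilizers. Such modules $\mathbb{Z}[G/F]$, $F$ finite, are Gorenstein projective. Hence $\mathrm{Gcd}_\mathbb{Z}G\le 1$, so $\mathrm{Ghd}_\mathbb{Z}G\leq 1$ as well. Non-local-finiteness gives $\mathrm{Ghd}_\mathbb{Z}G\ge 1$, so both dimensions equal $1$. The implication (i)$\Rightarrow$(ii) is trivial.

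\textbf{(ii)$\Rightarrow$(iii).} Since $\mathrm{Gcd}_\mathbb{Z}G=1<\infty$ and $\mathrm{silp}\,\mathbb{Z}<\infty$, there is a $\mathbb{Z}$-split monomorphism $\mathbb{Z}\to A$ with $A$ free abelian and $\mathrm{pd}_{\mathbb{Z}G}A=1$. Tensoring with $\mathbb{Q}$ gives a split (over $\mathbb{Q}$) monomorphism $\mathbb{Q}\to A\otimes\mathbb{Q}$ with $\mathrm{pd}_{\mathbb{Q}G}(A\otimes\mathbb{Q})\le 1$. It follows that $\mathrm{Gcd}_\mathbb{Q}G\le 1$.

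Over $\mathbb{Q}$, for $\mathbf{LH}\mathfrak F$ groups, finiteness of $\mathrm{Gcd}_\mathbb{Q}G$ should coincide with $\mathrm{cd}_\mathbb{Q}G$. The argument would use that in an $\mathbf{LH}\mathfrak F$ group with $\mathrm{Gcd}_\mathbb{Q}G<\infty$ every finite subgroup is $\mathbb{Q}$-projective, since $\mathbb{Q}$ is invertible on finite orders. Rather than proving that equality in general, we would directly argue that $\mathbb{Q}$ is a direct summand of $A\otimes\mathbb{Q}$ restricted to finite subgroups, and then run Kropholler's $\mathbf{H}\mathfrak F$ induction to get $\mathrm{cd}_\mathbb{Q}G\le 1$. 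This is the step we expect to be the main obstacle; if a cleaner route is needed we would cite the known equality $\mathrm{Gcd}_\mathbb{Q}G=\mathrm{cd}_\mathbb{Q}G$ for $\mathbf{LH}\mathfrak F$ groups. Combined with non-local-finiteness, which gives $\mathrm{cd}_\mathbb{Q}G\ge 1$, we get $\mathrm{cd}_\mathbb{Q}G=1$.

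\textbf{(iii)$\Rightarrow$(iv).} This is Dunwoody's theorem: a group of rational cohomological dimension at most one acts on a tree with finite stabilizers. This holds for arbitrary, not necessarily finitely generated, groups. Equivalently, $\dim\underline{E}G\le 1$, and equality holds since $G$ is infinite.
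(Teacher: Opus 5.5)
Your proof is correct and uses the same ingredients as the paper: Mislin's \cite[Theorem 1.3]{M} (i.e.\ Dunwoody's theorem) for (iii)$\Rightarrow$(iv), Talelli's $\mathrm{cd}_{\mathbb Q}G=\mathrm{Gcd}_{\mathbb Q}G\leq\mathrm{Gcd}_{\mathbb Z}G$ for ${\scriptstyle\mathbf{LH}}\mathfrak F$-groups for (ii)$\Rightarrow$(iii), and $\mathrm{Ghd}_{\mathbb Z}G\leq\mathrm{Gcd}_{\mathbb Z}G$ together with $\mathrm{Ghd}_{\mathbb Z}G>0$ for non-locally-finite $G$ \cite[Corollary 5.3]{KS} to reach (i). You arrange these as a cycle and spell out, via Gorenstein projectivity of $\mathbb{Z}[G/F]$ for finite $F$, the bound $\mathrm{Gcd}_{\mathbb Z}G\leq 1$ coming from a tree with finite stabilizers, which the paper simply calls clear; once you cite Talelli's equality, as the paper does, your sketched direct ${\scriptstyle\mathbf{H}}\mathfrak F$-induction is unnecessary.
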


\begin{proof}
The equivalence between (iii) and (iv) holds for all groups, as shown
in \cite[Theorem 1.3]{M}. Since $G$ is not locally finite, it is infinite
and hence both invariants $\mathrm{Gcd}_\mathbb{Z}G$ and
$\mathrm{cd}_\mathbb{Q}G$ are positive. It is then clear that
(iv)$\Rightarrow$(ii). Since $G$ is an ${\scriptstyle\mathbf{LH}}\mathfrak{F}$-group,
we have
$\mathrm{cd}_\mathbb{Q}G = \mathrm{Gcd}_\mathbb{Q}G \leq
 \mathrm{Gcd}_\mathbb{Z}G$
as shown in \cite[Theorem 3.2 and Theorem 3.5(1)]{T14}. We conclude that
(ii)$\Rightarrow$(iii). It only remains to prove that (ii)$\Rightarrow$(i).
Since $\mathrm{Ghd}_{\mathbb{Z}}G \leq \mathrm{Gcd}_{\mathbb{Z}}G =1$,
$\mathrm{Ghd}_{\mathbb{Z}}G$ is equal to either 0 or 1. But $G$ is not
locally finite and hence \cite[Corollary 5.3]{KS} implies that
$\mathrm{Ghd}_{\mathbb{Z}}G > 0$. It follows that
$\mathrm{Ghd}_{\mathbb{Z}}G = 1$ and the proof is complete.
\end{proof}

The following version of Serre's theorem on the Gorenstein homological
dimension of groups shows that $\mathrm{Ghd}_k$ is invariant under passage
to finite-index subgroups. The proof is omitted, as the argument is analogous
to that of \cite[Theorem~3.3]{R} for the Gorenstein cohomological dimension.
In fact, the result holds more generally for the Gorenstein flat dimension
of modules under a Frobenius extension of rings; recall that the extension
$kH\subseteq kG$ is Frobenius, if $H\leq G$ has finite index.

\begin{Proposition}\label{prop:Ser-Ghd}
Let $G$ be a group and $k$ a commutative ring. For any subgroup $H\leq G$
of finite index, we have an equality $\mathrm{Ghd}_k G = \mathrm{Ghd}_k H$.
\end{Proposition}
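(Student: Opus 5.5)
We prove the two inequalities $\mathrm{Ghd}_k H\leq \mathrm{Ghd}_k G$ and $\mathrm{Ghd}_k G\leq \mathrm{Ghd}_k H$ separately. Both directions rest on two facts about the Frobenius extension $kH\subseteq kG$: restriction $\mathrm{res}^G_H$ and induction $kG\otimes_{kH}-$ are both exact, and each preserves flat modules. Moreover, $\mathrm{Ind}^G_H=kG\otimes_{kH}-$ agrees with coinduction $\mathrm{Hom}_{kH}(kG,-)$, because $[G:H]<\infty$.

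For the inequality $\mathrm{Ghd}_k H\leq \mathrm{Ghd}_k G$ we show that restriction preserves Gorenstein flat modules. Let $\mathbf F$ be a totally acyclic complex of flat $kG$-modules. Then $\mathrm{res}\,\mathbf F$ is an acyclic complex of flat $kH$-modules. For an injective right $kH$-module $I$, we have
\[
I\otimes_{kH}\mathrm{res}\,\mathbf F\cong (I\otimes_{kH}kG)\otimes_{kG}\mathbf F .
\]
Here $I\otimes_{kH}kG\cong \mathrm{Hom}_{kH}(kG,I)$ is coinduced from an injective module, so it is an injective right $kG$-module. Hence this complex is acyclic, and $\mathrm{res}\,\mathbf F$ is totally acyclic. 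Now take a Gorenstein flat resolution of the $kG$-module $k$ of length $n=\mathrm{Ghd}_kG$ and restrict it. This gives a Gorenstein flat resolution of the trivial $kH$-module $k$ of the same length.

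For the inequality $\mathrm{Ghd}_k G\leq \mathrm{Ghd}_k H$ the symmetric argument shows that induction preserves Gorenstein flatness. If $\mathbf F$ is totally acyclic over $kH$, then for an injective right $kG$-module $I$,
\[
I\otimes_{kG}(kG\otimes_{kH}\mathbf F)\cong \mathrm{res}(I)\otimes_{kH}\mathbf F .
\]
Since $\mathrm{res}\,I$ is injective, because $kG$ is free over $kH$, this complex is acyclic. So if $\mathrm{Ghd}_kH=n$, the $kG$-module $kG\otimes_{kH}k=k[G/H]$ has $\mathrm{Gfd}_{kG}\leq n$.

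It remains to pass from $k[G/H]$ to $k$, and this is the main obstacle. The trivial module $k$ need not be a direct summand of $k[G/H]$ when $[G:H]$ is not invertible in $k$. Following the strategy of \cite[Theorem 3.3]{R}, we use the $k$-split monomorphism $k\to k[G/H]$ given by the norm element; its cokernel $C$ is $k$-free. We then use the following characterization: a $kG$-module $M$ has $\mathrm{Gfd}\leq n$ if and only if its restriction to $H$ has $\mathrm{Gfd}\leq n$. Here Gorenstein flat dimension is read off via vanishing of $\mathrm{Tor}^{kG}_{i}(I,-)$ for $i>n$ and injective $I$, together with finiteness. For finiteness we plan to show that $\mathrm{Gfd}_{kG}k<\infty$ by iterating the short exact sequences $0\to k\to k[G/H]\to C\to 0$, since $\mathrm{res}^G_H$ of everything involved has finite Gorenstein flat dimension. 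Concretely, $\mathrm{Ind}\,\mathrm{res}\,M\to M$ is a split epimorphism after restriction, and the Eckmann--Shapiro isomorphism $\mathrm{Tor}^{kG}_i(I,\mathrm{Ind}\,N)\cong \mathrm{Tor}^{kH}_i(\mathrm{res}\,I,N)$ gives $\mathrm{Tor}^{kG}_i(I,k)=0$ for $i>n$. The reason is that $\mathrm{Tor}^{kG}_i(I,k)$ is a natural direct summand of $\mathrm{Tor}^{kG}_i(I\otimes_k \mathrm{End}\ldots)$, using the Frobenius splitting of $M\to\mathrm{Coind}\,\mathrm{res}\,M\cong \mathrm{Ind}\,\mathrm{res}\,M$ as a map of $kG$-modules tensored appropriately. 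This Frobenius-extension bookkeeping, namely establishing finiteness of $\mathrm{Gfd}_{kG}k$ and the relevant summand argument, is where we expect most of the work to lie.
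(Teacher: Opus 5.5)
Your first half is fine: the arguments that restriction and induction along $kH\subseteq kG$ preserve Gorenstein flat modules are correct, and they give $\mathrm{Ghd}_kH\leq\mathrm{Ghd}_kG$ and $\mathrm{Gfd}_{kG}k[G/H]\leq n=\mathrm{Ghd}_kH$. The gap is the passage from $k[G/H]$ to $k$. Your plan needs $\mathrm{Gfd}_{kG}k<\infty$ before a Tor criterion can be applied, and iterating $0\to M\to\mathrm{Ind}\,\mathrm{res}\,M\to C\to 0$ cannot give this. It only produces an unbounded coresolution $0\to k\to X^0\to X^1\to\cdots$ with $\mathrm{Gfd}_{kG}X^i\leq n$. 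Each short exact sequence bounds $\mathrm{Gfd}_{kG}$ of a module only in terms of $\mathrm{Gfd}_{kG}$ of the next cokernel, which is just as unknown as $\mathrm{Gfd}_{kG}k$.

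Your Tor argument is also misstated. The map $M\to\mathrm{Coind}\,\mathrm{res}\,M$ is only $kH$-split; for $M=k$ it is the norm map, which in general does not split over $kG$, as you observed yourself. The splitting that works is on the injective side: $I\to\mathrm{Coind}\,\mathrm{res}\,I\cong\mathrm{res}\,I\otimes_{kH}kG$ splits because $I$ is injective. Hence $\mathrm{Tor}^{kG}_i(I,C)$ is a direct summand of $\mathrm{Tor}^{kH}_i(\mathrm{res}\,I,\mathrm{res}\,C)$ for every $kG$-module $C$.

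The missing idea is to prove a module-level statement, as in the Frobenius-extension argument of \cite[Theorem 3.3]{R} that the paper refers to: a $kG$-module $K$ whose restriction to $H$ is Gorenstein flat is itself Gorenstein flat.

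To prove it, embed $\mathrm{res}\,K$ in a flat $kH$-module $F$ with Gorenstein flat cokernel $Y$. Then the composite $K\to\mathrm{Coind}\,\mathrm{res}\,K\to\mathrm{Coind}\,F\cong\mathrm{Ind}\,F$ is a monomorphism into a flat $kG$-module. Its cokernel again has Gorenstein flat restriction: that restriction is an extension of $\mathrm{res}\,\mathrm{Ind}\,Y$ by a $kH$-direct summand of $\mathrm{res}\,\mathrm{Ind}\,\mathrm{res}\,K$. Here you use the Mackey formula, your two preservation results, and closure of Gorenstein flat modules under extensions and direct summands \cite{SS}. Iterating gives a flat coresolution of $K$ whose cokernels, like $K$ itself, satisfy $\mathrm{Tor}^{kG}_{\geq 1}(I,-)=0$ by the summand argument above. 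Splicing it with a projective resolution of $K$ gives a totally acyclic complex of flat $kG$-modules.

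Now apply this to the $n$-th syzygy $K$ of $k$ in a projective $kG$-resolution. Its restriction is an $n$-th syzygy of $k$ over $kH$, hence Gorenstein flat since $\mathrm{Gfd}_{kH}k=n$. So $K$ is Gorenstein flat and $\mathrm{Ghd}_kG\leq n$, with no separate finiteness step needed.
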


Let $G$ be a virtually torsion-free group; then, $G$ has a torsion-free
subgroup of finite index. It follows from Serre's theorem
\cite[Theorem VIII 3.1]{Bro} that all such subgroups have the same
cohomolgical dimension. The common value of these dimensions is the
virtual cohomological dimension $\mathrm{vcd}_\mathbb{Z}G$ of $G$. In
view of \cite[Theorem VIII 11.1]{Bro}, $G$ has finite virtual cohomological
dimension if and only if there exists a finite dimensional contractible
and proper $G$-CW complex $X$. As shown by Kropholler and Mislin
\cite[p.\ 126]{KM}, all groups of finite virtual cohomological dimension belong to
${\scriptstyle\mathbf{H}}_1\mathfrak{F}$. For a group $G$ which is not virtually
torsion-free, we set $\mathrm{vcd}_\mathbb{Z}G=\infty$. The virtual homological
dimension of a virtually torsion-free group is defined analogously. There are
also versions of these invariants over any commutative coefficient ring $k$.
Although the following facts are well-known, we include them here for convenient
reference and give a brief argument to prove them.

\begin{Proposition}\label{prop:vcd-vhd}
Let $G$ be a group and $k$ a commutative ring.
\begin{enumerate}
\item[(i)] There are inequalities $\mathrm{Gcd}_k G \leq \mathrm{vcd}_k G \leq \mathrm{cd}_k G$.
The first two invariants coincide if
$\mathrm{vcd}_k G <\infty$, and all three are equal if $\mathrm{cd}_k G <\infty$.
\item[(ii)] There are inequalities $\mathrm{Ghd}_k G \leq \mathrm{vhd}_k G \leq \mathrm{hd}_k G$.
The first two invariants coincide if
$\mathrm{vhd}_k G <\infty$, and all three are equal if $\mathrm{hd}_k G <\infty$.
\end{enumerate}
\end{Proposition}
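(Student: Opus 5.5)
We prove (i) and then indicate the changes needed for (ii). Throughout we use the definition $\mathrm{vcd}_kG=\mathrm{cd}_kH$ for a subgroup $H\leq G$ of finite index with $\mathrm{cd}_kH$ minimal; for a group that is not virtually torsion-free we set $\mathrm{vcd}_kG=\infty$. The inequality $\mathrm{vcd}_kG\leq\mathrm{cd}_kG$ is immediate, since we may take $H=G$: restriction of a projective $kG$-resolution of $k$ gives a projective $kH$-resolution of $k$ for every $H\leq G$. If $\mathrm{cd}_kG<\infty$, the remaining claims will follow once we have shown that $\mathrm{Gcd}_kG=\mathrm{vcd}_kG$ whenever $\mathrm{vcd}_kG<\infty$ and that $\mathrm{Gcd}_kG=\mathrm{cd}_kG$. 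The second equality is the remark made at the start of Section~\ref{sec:eqls}: the Gorenstein projective dimension of a module of finite projective dimension coincides with its projective dimension.

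The key step is the finite-index invariance $\mathrm{Gcd}_kG=\mathrm{Gcd}_kH$ for $H\leq G$ of finite index. This is \cite[Theorem~3.3]{R}, the cohomological counterpart of Proposition~\ref{prop:Ser-Ghd}. If one wants an argument, I would argue through the Frobenius extension $kH\subseteq kG$. Induction $kG\otimes_{kH}-$ and coinduction agree, and both restriction and induction preserve projective modules and totally acyclic complexes; the latter is checked by adjunction, since $\mathrm{Hom}_{kG}(kG\otimes_{kH}\mathbf T,P)\cong\mathrm{Hom}_{kH}(\mathbf T,P)$ and $P$ restricts to a projective $kH$-module. Restriction therefore gives $\mathrm{Gcd}_kH\leq\mathrm{Gcd}_kG$.

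For the reverse inequality, the module $k$ is a direct summand of neither $kG\otimes_{kH}k$ nor its restriction in general, so a direct summand argument is not available. Instead I would use the characterization of Gorenstein projective dimension via vanishing of $\mathrm{Ext}^{>n}$ against projectives together with Gorenstein projectivity of high syzygies. Given $\mathrm{Gcd}_kH=n$, the $n$-th syzygy $K$ of $k$ in a projective $kG$-resolution restricts to a Gorenstein projective $kH$-module. One then constructs a totally acyclic complex over $kG$ with syzygy $K$. To do this, take a complete resolution over $kH$ and apply the Frobenius functor, which yields $kG\otimes_{kH}K$ as a Gorenstein projective $kG$-module. Next, use the monomorphism $K\to kG\otimes_{kH}K$ given by the norm map $x\mapsto\sum_{g\in G/H}g\otimes g^{-1}x$; its cokernel is again of the same type. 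This is exactly where I expect the main obstacle to lie, and it is why I would simply cite \cite[Theorem~3.3]{R}.

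Granting finite-index invariance, suppose $\mathrm{vcd}_kG<\infty$ and choose $H$ of finite index with $\mathrm{cd}_kH=\mathrm{vcd}_kG<\infty$. Then $\mathrm{Gcd}_kG=\mathrm{Gcd}_kH=\mathrm{cd}_kH=\mathrm{vcd}_kG$. If $\mathrm{vcd}_kG=\infty$, the inequality $\mathrm{Gcd}_kG\leq\mathrm{vcd}_kG$ is trivial. This proves (i).

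For (ii) the argument is the same, with three substitutions. Flat resolutions replace projective ones; Proposition~\ref{prop:Ser-Ghd} replaces \cite[Theorem~3.3]{R}; and the standard fact that $\mathrm{Gfd}$ equals $\mathrm{fd}$ whenever $\mathrm{fd}$ is finite replaces its projective counterpart. These together give $\mathrm{Ghd}_kG=\mathrm{Ghd}_kH=\mathrm{hd}_kH=\mathrm{vhd}_kG$ when $\mathrm{vhd}_kG<\infty$, and the other cases are handled exactly as before.
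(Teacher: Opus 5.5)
Your proposal is correct and follows essentially the same route as the paper: $\mathrm{vcd}_kG\leq\mathrm{cd}_kG$ (resp.\ $\mathrm{vhd}_kG\leq\mathrm{hd}_kG$) comes from restricting to a finite-index subgroup, and the rest comes from finite-index invariance of $\mathrm{Gcd}_k$ via \cite[Theorem~3.3]{R} (resp.\ of $\mathrm{Ghd}_k$ via Proposition~\ref{prop:Ser-Ghd}) together with the agreement of Gorenstein and ordinary dimensions when the latter are finite, so your tentative norm-map sketch of the reverse inequality is not needed. The one caveat, which the paper's own argument shares, is that the convention $\mathrm{vcd}_kG=\infty$ for groups that are not virtually torsion-free fits with ``take $H=G$'' only when torsion forces $\mathrm{cd}_kG=\infty$, as over $\mathbb{Z}$ (over $\mathbb{Q}$ a countably infinite locally finite group has $\mathrm{cd}_{\mathbb{Q}}G=1$), whereas your minimum-over-finite-index-subgroups definition, used without that convention, makes every step valid for arbitrary $k$.
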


\begin{proof}
We only prove the assertions of (ii), since those of (i) hold by a similar argument.
If $G$ is not virtually torsion-free, then
$\mathrm{Ghd}_k G \leq \mathrm{vhd}_k G = \mathrm{hd}_k G = \infty$. Assume that $G$
is virtually torsion-free and let $H \leq G$ be a torsion-free subgroup of finite
index. By Proposition \ref{prop:Ser-Ghd} we have $\mathrm{Ghd}_k G = \mathrm{Ghd}_k H$.
Then, $\mathrm{Ghd}_k G \leq \mathrm{hd}_k H = \mathrm{vhd}_k G \leq \mathrm{hd}_k G$.
If $ \mathrm{vhd}_k G = \mathrm{hd}_k H$ is finite, then
$\mathrm{Ghd}_k H= \mathrm{hd}_k H$, and the equality
$\mathrm{Ghd}_k G = \mathrm{vhd}_k G$ holds. If $\mathrm{hd}_k G <\infty$, then
$\mathrm{Ghd}_k G = \mathrm{hd}_k G$ and these are both equal to $\mathrm{vhd}_k G$.
\end{proof}


\section{Virtually soluble groups and Hirsch length}
\label{sec:vs-Hl}

\noindent Let $G$ be a soluble group. Recall that the Hirsch length
(or Hirsch number) of $G$, denoted by $h(G)$, is defined as the sum
of the torsion-free ranks of the derived factors of $G$:
\[h(G)=\sum\limits_{n\geq 0} \mathrm{dim}_\mathbb{Q}(G^{(n)}/G^{(n+1)} \otimes_\mathbb{Z} \mathbb{Q}).\]
This is a non-negative integer or $\infty$. The function $h$
is additive in the sense of group extensions.
If $G$ is a virtually soluble group containing a soluble subgroup $N$
of finite index, we can define the Hirsch length of $G$ as $h(N)$;
this is independent of the choice of such a subgroup $N$.

\begin{Lemma}\label{lem:Hir.len}
If $G$ is any virtually soluble group, then
$h(G)<\infty$ if and only if $\mathrm{Ghd}_\mathbb{Q}G<\infty$.
\end{Lemma}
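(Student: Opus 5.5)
By Proposition \ref{prop:Ser-Ghd}, $\mathrm{Ghd}_\mathbb{Q}G=\mathrm{Ghd}_\mathbb{Q}N$ for a soluble subgroup $N$ of finite index, and $h(G)=h(N)$ by definition. So we may assume throughout that $G$ is soluble. The plan is to compare $\mathrm{Ghd}_\mathbb{Q}G$ with the ordinary rational homological dimension $\mathrm{hd}_\mathbb{Q}G$, and then use the classical fact that for soluble (indeed elementary amenable) groups, $\mathrm{hd}_\mathbb{Q}G<\infty$ if and only if $h(G)<\infty$, in which case $\mathrm{hd}_\mathbb{Q}G=h(G)$. This is the result of Kropholler and Mart\'{\i}nez-P\'erez \cite{K-MP} quoted in the introduction: over $k=\mathbb{Q}$ there is no $k$-torsion, so the only condition is finiteness of the Hirsch length.

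The direction $h(G)<\infty\Rightarrow \mathrm{Ghd}_\mathbb{Q}G<\infty$ is then immediate. By \cite{K-MP}, $\mathrm{hd}_\mathbb{Q}G=h(G)<\infty$, and always $\mathrm{Ghd}_\mathbb{Q}G\leq\mathrm{hd}_\mathbb{Q}G$ (cf.\ Proposition \ref{prop:vcd-vhd}(ii)).

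For the converse, I will show that over $\mathbb{Q}$ finiteness of $\mathrm{Ghd}$ forces finiteness of $\mathrm{hd}$. Since $\mathbb{Q}$ is a field, $\mathrm{sfli}\,\mathbb{Q}=0$, so by the characterization in Section 2, $\mathrm{Ghd}_\mathbb{Q}G=n<\infty$ yields a $\mathbb{Q}G$-monomorphism $\iota:\mathbb{Q}\to A$ with $\mathrm{fd}_{\mathbb{Q}G}A=n$. The aim is to split $\iota$, or at least to make $\mathbb{Q}$ a direct summand of something of finite flat dimension, using amenability of $G$. Soluble groups are amenable, so there is an invariant mean on $\ell^\infty(G)$. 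Write $A$ as a directed union of finitely generated submodules containing $\iota(\mathbb{Q})$, or equivalently dualize: $\mathrm{fd}_{\mathbb{Q}G}A=n$ gives $\mathrm{id}_{\mathbb{Q}G}A^*\leq n$ for the character module $A^*=\mathrm{Hom}_\mathbb{Q}(A,\mathbb{Q})$, together with a $\mathbb{Q}G$-epimorphism $A^*\to\mathbb{Q}$. Using the invariant mean, I will try to produce a $\mathbb{Q}G$-splitting of $\mathbb{Q}^{*}\to A^{*}$ type maps. The standard route is the averaging that shows, for amenable $G$, that the trivial module $\mathbb{Q}$ is a direct summand of $\ell^\infty(G,\mathbb{Q})$-like coinduced modules. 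This yields $\mathrm{id}_{\mathbb{Q}G}\mathbb{Q}^*<\infty$, hence $\mathrm{fd}_{\mathbb{Q}G}\mathbb{Q}<\infty$, i.e.\ $\mathrm{hd}_\mathbb{Q}G<\infty$. Then \cite{K-MP} gives $h(G)<\infty$.

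The main obstacle is this splitting step: invariant means are real-valued, not $\mathbb{Q}$-valued. So I would first try a cleaner alternative that avoids averaging altogether, namely the known result \cite[Corollary 5.3]{KS}-type statement or the results of \cite{T14,ET25} that for groups in ${\scriptstyle\mathbf{LH}}\mathfrak{F}$ (which contains all soluble groups) one has $\mathrm{Ghd}_\mathbb{Q}G=\mathrm{hd}_\mathbb{Q}G$. Rationally, finite subgroups have invertible order, so Gorenstein flat $\mathbb{Q}F$-modules are flat, and the ${\scriptstyle\mathbf{LH}}\mathfrak{F}$ induction transfers this to $G$. If that is not available in the needed form, a fallback is to pass to $\mathbb{R}$ coefficients, where the mean argument works and $\mathrm{hd}_\mathbb{R}G=\mathrm{hd}_\mathbb{Q}G$ by flat base change. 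Once $\mathrm{hd}_\mathbb{Q}G<\infty$ is established, the conclusion follows from \cite{K-MP}.
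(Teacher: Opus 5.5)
Your reduction to a soluble subgroup of finite index and your forward direction are fine; quoting \cite{K-MP} instead of Stammbach's theorem is harmless. The converse, however, is not proved. It rests entirely on the implication $\mathrm{Ghd}_\mathbb{Q}G<\infty\Rightarrow\mathrm{hd}_\mathbb{Q}G<\infty$, and none of your three routes establishes it.

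\emph{The averaging route.} A splitting of $A^*\to\mathbb{Q}$ amounts to a $G$-invariant functional $\psi$ on $A$ with $\psi(\iota(1))=1$. To obtain $\psi$ from some functional $\phi$ via an invariant mean, the functions $g\mapsto\phi(ga)$ must be bounded. For the module $A$ supplied by the characterization in Section~2, there is no reason for this. An invariant mean only gives a $G$-equivariant retraction of $B(G,\mathbb{R})$ onto the constants, and nothing in your argument shows $\mathrm{fd}_{\mathbb{R}G}B(G,\mathbb{R})<\infty$. So real-valuedness is not the real obstacle, and passing to $\mathbb{R}$ does not help.

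\emph{The ``cleaner alternative.''} This appeals to a result you neither prove nor pin down. As used in this paper, \cite[Corollary 5.3]{KS} characterizes the groups with $\mathrm{Ghd}_\mathbb{Z}=0$, and \cite{T14} concerns $\mathrm{Gcd}_\mathbb{Q}$. Your one-line sketch also skips the only nontrivial point. Semisimplicity of $\mathbb{Q}F$ for finite $F$ is just the base case. To climb the hierarchy you must pass information from $G$ down to arbitrary subgroups (the stabilizers), and nothing in your proposal does that.

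The missing ingredient is monotonicity: $\mathrm{Ghd}_\mathbb{Q}H\le\mathrm{Ghd}_\mathbb{Q}G$ for every subgroup $H\le G$ \cite[Proposition 4.1]{RY}. With it, the converse needs no comparison of $\mathrm{Ghd}_\mathbb{Q}G$ with $\mathrm{hd}_\mathbb{Q}G$, and this is exactly the paper's argument. Suppose $G$ is soluble with $h(G)=\infty$. Choose $l$ with $h(G^{(l)})<\infty$ and $G^{(l-1)}/G^{(l)}$ of infinite torsion-free rank. For each $n$ there is $H_n\ge G^{(l)}$ with $H_n/G^{(l)}\cong\mathbb{Z}^n$, so $n\le h(H_n)<\infty$. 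Your forward direction, together with the equality case of Proposition~\ref{prop:vcd-vhd}(ii), then gives $\mathrm{Ghd}_\mathbb{Q}G\ge\mathrm{Ghd}_\mathbb{Q}H_n=h(H_n)\ge n$ for every $n$.

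Monotonicity would also rescue your ${\scriptstyle\mathbf{LH}}\mathfrak{F}$ route, and would prove more. By transfinite induction, every stabilizer $K_\sigma$ that occurs satisfies $\mathrm{hd}_\mathbb{Q}K_\sigma=\mathrm{Ghd}_\mathbb{Q}K_\sigma\le\mathrm{Ghd}_\mathbb{Q}G$. The cellular chain complex of the finite dimensional contractible complex then bounds $\mathrm{hd}_\mathbb{Q}$ at each stage by $\dim X+\mathrm{Ghd}_\mathbb{Q}G$. Since $\mathrm{hd}_\mathbb{Q}$ is detected on finitely generated subgroups, this yields $\mathrm{Ghd}_\mathbb{Q}G=\mathrm{hd}_\mathbb{Q}G$ for all ${\scriptstyle\mathbf{LH}}\mathfrak{F}$ groups. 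For soluble groups, though, the direct argument above is all that is needed.
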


\begin{proof}
For any group $G$, the inequality
$\mathrm{Ghd}_\mathbb{Q}G \leq \mathrm{hd}_\mathbb{Q} G$ is actually 
an equality if $\mathrm{hd}_\mathbb{Q} G < \infty$. In view of 
\cite[Theorem 1]{Stam}, for any virtually soluble group $G$ we have an 
equality $\mathrm{hd}_\mathbb{Q} G = h(G)$. Hence, $h(G)<\infty$ implies 
that $\mathrm{Ghd}_\mathbb{Q}G = \mathrm{hd}_\mathbb{Q} G = h(G) < \infty$.

To show the reverse implication, assume that $h(G)=\infty$. Then, there is
a factor $G^{(l-1)}/G^{(l)}$ in the sequence of derived factors of $G$
having infinite Hirsch length with $h(G^{(l)}) < \infty$. For any $n$
we can find a subgroup $H_n$ of $G$ containing $G^{(l)}$, such that
the quotient $H_n/G^{(l)}$ is free abelian of rank $n$; it follows that
$n \leq h(H_n)<\infty$. Then, as we noted in the first part of the proof, we
have $\mathrm{Ghd}_\mathbb{Q}H_n = \mathrm{hd}_\mathbb{Q}H_n = h(H_n) \geq n$.
Since $\mathrm{Ghd}_\mathbb{Q}G \geq \mathrm{Ghd}_\mathbb{Q}H_n$ (see,
for example, \cite[Proposition 4.1]{RY}) and $n$ is arbitrary, it follows
that $\mathrm{Ghd}_\mathbb{Q}G = \infty$.
\end{proof}

By a classical result of Stammbach \cite{Stam}, as further developed by
Bieri \cite[Theorem 7.10]{Bie}, we know that $\mathrm{hd}_\mathbb{Z}G=h(G)$
if $G$ is a torsion-free soluble group. We now state the following:

\begin{Theorem}\label{thm:Ghd-Hir}
Let $G$ be a virtually soluble group. Then, there are equalities
\[ \mathrm{Ghd}_\mathbb{Z}G= \mathrm{Ghd}_\mathbb{Q}G =
\mathrm{hd}_\mathbb{Q}G = h(G).\]
\end{Theorem}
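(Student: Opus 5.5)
We already know from the proof of Lemma \ref{lem:Hir.len} that $\mathrm{Ghd}_\mathbb{Q}G=\mathrm{hd}_\mathbb{Q}G=h(G)$ for every virtually soluble $G$. If $h(G)<\infty$ this is Stammbach's theorem. If $h(G)=\infty$, then $\mathrm{Ghd}_\mathbb{Q}G=\infty$ by the lemma, and $\mathrm{hd}_\mathbb{Q}G\geq\mathrm{Ghd}_\mathbb{Q}G$ forces $\mathrm{hd}_\mathbb{Q}G=\infty$ as well. So the plan reduces to proving $\mathrm{Ghd}_\mathbb{Z}G=h(G)$, which I will do via two inequalities.

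\emph{Lower bound.} I will show $\mathrm{Ghd}_\mathbb{Q}G\leq\mathrm{Ghd}_\mathbb{Z}G$ for every group $G$.
\begin{enumerate}
\item[(a)] Assume $\mathrm{Ghd}_\mathbb{Z}G=n<\infty$. Since $\mathrm{sfli}\,\mathbb{Z}=0<\infty$, Section 2 gives a $\mathbb{Z}$-pure monomorphism $\mathbb{Z}\to A$ of $\mathbb{Z}G$-modules, with $A$ torsion-free and $\mathrm{fd}_{\mathbb{Z}G}A=n$.
\item[(b)] Tensor with $\mathbb{Q}$. This gives a $\mathbb{Q}$-split (hence pure) monomorphism $\mathbb{Q}\to A\otimes\mathbb{Q}$.
\item[(c)] A flat $\mathbb{Z}G$-resolution of $A$ tensored with $\mathbb{Q}$ stays exact, since $\mathbb{Q}$ is $\mathbb{Z}$-flat, and yields a flat $\mathbb{Q}G$-resolution. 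Hence $\mathrm{fd}_{\mathbb{Q}G}(A\otimes\mathbb{Q})\leq n$.
\item[(d)] The $\mathbb{Q}$-version of the characterization then gives $\mathrm{Ghd}_\mathbb{Q}G\leq n$.
\end{enumerate}
Alternatively, one can tensor a Gorenstein flat resolution of $\mathbb{Z}$ with $\mathbb{Q}$ and check that Gorenstein flatness is preserved under base change $\mathbb{Z}G\to\mathbb{Q}G$. The same argument handles $h(G)=\infty$: then $\mathrm{Ghd}_\mathbb{Z}G\geq\mathrm{Ghd}_\mathbb{Q}G=\infty$, so we may assume $h(G)<\infty$ from now on.

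\emph{Upper bound $\mathrm{Ghd}_\mathbb{Z}G\leq h(G)$ when $h(G)<\infty$.} By Proposition \ref{prop:Ser-Ghd} I may pass to a soluble subgroup of finite index, so assume $G$ soluble of finite Hirsch length. By the structure theory of such groups (Hillman--Linnell type results), $G$ has a maximal normal locally finite subgroup $T$, and $G/T$ is virtually torsion-free soluble with $h(G/T)=h(G)$.
\begin{enumerate}
\item[(a)] Passing to a further finite-index subgroup, I get an extension $1\to T\to H\to Q\to 1$ with $T$ locally finite and $Q$ torsion-free soluble, $h(Q)=h(G)$.
\item[(b)] By Stammbach--Bieri, $\mathrm{hd}_\mathbb{Z}Q=h(Q)$.
\item[(c)] Since $T$ is locally finite, $\mathrm{Ghd}_\mathbb{Z}T=0$, by \cite{KS} or directly since locally finite groups are directed unions of finite groups, each of Gorenstein homological dimension $0$.
\item[(d)] Apply a subadditivity estimate for extensions: $\mathrm{Ghd}_\mathbb{Z}H\leq\mathrm{Ghd}_\mathbb{Z}T+\mathrm{hd}_\mathbb{Z}Q$.
\end{enumerate}
To prove (d) I would take a flat resolution of $\mathbb{Z}$ over $\mathbb{Z}Q$ of length $h(Q)$ and inflate it to $\mathbb{Z}H$. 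Each term $\mathbb{Z}Q=\mathbb{Z}H\otimes_{\mathbb{Z}T}\mathbb{Z}$ is induced from the trivial $T$-module, so it has Gorenstein flat dimension $0$ over $\mathbb{Z}H$, because induction preserves Gorenstein flatness. Flat $\mathbb{Z}Q$-modules are direct limits of free ones, so they are also Gorenstein flat over $\mathbb{Z}H$, using that the class of Gorenstein flat modules is closed under direct limits (\v{S}aroch--\v{S}t'ov\'{i}\v{c}ek). This gives a resolution of $\mathbb{Z}$ by Gorenstein flat $\mathbb{Z}H$-modules of length $h(G)$.

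The main obstacle is the upper bound: setting up the reduction to a locally-finite-by-torsion-free extension, and justifying that inflated flat $\mathbb{Z}Q$-modules are Gorenstein flat over $\mathbb{Z}H$. If the direct-limit closure is awkward, I would instead use the criterion of Section 2 over $\mathbb{Z}T$ to build a pure embedding $\mathbb{Z}\to A$ with $\mathrm{fd}_{\mathbb{Z}H}A\leq h(Q)$.
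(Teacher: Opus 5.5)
Your proposal is correct and follows essentially the paper's route: the paper likewise reduces to $\mathrm{Ghd}_\mathbb{Z}G\le h(G)$ using $\mathrm{Ghd}_\mathbb{Q}\le\mathrm{Ghd}_\mathbb{Z}$ (cited from \cite[Proposition 3.8]{RY}, where you prove it via characteristic modules), passes to a finite-index locally-finite-by-torsion-free-soluble subgroup (via Mal'cev's structure theorem, where you use Hillman--Linnell), and then combines the subadditivity \cite[Proposition 4.4]{RY} (which you prove by hand in the needed case via inflation and closure of Gorenstein flat modules under direct limits), $\mathrm{Ghd}_\mathbb{Z}T=0$ from \cite[Corollary 5.3]{KS}, and Bieri's $\mathrm{hd}_\mathbb{Z}Q=h(Q)$. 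One harmless slip: $\mathrm{sfli}\,\mathbb{Z}=1$, not $0$ (e.g.\ $\mathbb{Q}/\mathbb{Z}$ is injective but not flat), but your argument only uses that it is finite.
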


\begin{proof}
For the virtually soluble group $G$, we have
$\mathrm{Ghd}_\mathbb{Q}G \leq \mathrm{hd}_\mathbb{Q} G = h(G)$.
If $h(G) <\infty$, we have
$\mathrm{Ghd}_\mathbb{Q}G = \mathrm{hd}_\mathbb{Q} G <\infty$.
Moreover, it follows from Lemma \ref{lem:Hir.len} that
$h(G) =\infty$ if and only if $\mathrm{Ghd}_\mathbb{Q}G = \infty$.
It follows that $\mathrm{Ghd}_\mathbb{Q}G = \mathrm{hd}_\mathbb{Q}G = h(G)$.
By \cite[Proposition 3.8]{RY}, the inequality
$\mathrm{Ghd}_\mathbb{Q} G\leq \mathrm{Ghd}_\mathbb{Z} G$
holds without any assumption on $G$. Hence, it suffices to prove that
$\mathrm{Ghd}_\mathbb{Z}G \leq h(G)$, an inequality that is obvious if
$h(G) = \infty$.

Assume that $h(G)$ is finite. Then, $G$ contains a normal soluble
subgroup $N$ of finite index, such that $h(N) = h(G) < \infty$. By
the structure theorem of Mal'cev \cite{Mal}, there is a series
\[N_0 \lhd\, N_1\lhd\, \cdots \lhd\, N_n \unlhd\, N_{n+1}=N, \]
where $N_0$ is locally finite, the index $[N: N_n]$ is finite and
the quotients $N_i/N_{i-1}$ are torsion-free abelian of finite 
torsion-free rank for $1\leq i\leq n$. Since both indices 
$[N: N_n]$ and $[G: N]$ are finite, the index $[G: N_n]$ is finite 
as well. It follows that $h(G) = h(N_n)$ and
$\mathrm{Ghd}_\mathbb{Z} G = \mathrm{Ghd}_\mathbb{Z} N_n$; cf.\
Proposition 3.4. We now consider the group extension
\[ 1 \longrightarrow N_0 \longrightarrow N_n \longrightarrow N_n/N_0
     \longrightarrow 1 \]
and invoke \cite[Proposition 4.4]{RY} to conclude that
\[ \mathrm{Ghd}_\mathbb{Z} G = \mathrm{Ghd}_\mathbb{Z} N_n \leq
   \mathrm{Ghd}_\mathbb{Z} N_0 + \mathrm{Ghd}_\mathbb{Z} (N_n/N_0) =
   \mathrm{Ghd}_\mathbb{Z} (N_n/N_0) . \]
Here, the last equality follows by applying \cite[Corollary 5.3]{KS}
to the locally finite group $N_0$. We also have $h(N_0)=0$ and hence
$h(G) = h(N_n) = h(N_n/N_0)$. Applying \cite[Theorem 7.10]{Bie}
to the torsion-free soluble group $N_n/N_0$, we conclude that
\[ \mathrm{hd}_\mathbb{Z}(N_n/N_0) = h(N_n/N_0) = h(G) < \infty . \]
Consequently, we have
\[\mathrm{Ghd}_\mathbb{Z}G \leq \mathrm{Ghd}_\mathbb{Z}(N_n/N_0)
=\mathrm{hd}_\mathbb{Z}(N_n/N_0)=h(G),\]
as needed.
\end{proof}

The theorem implies that the Gorenstein homological dimension of
virtually soluble groups (over $\mathbb{Z}$) exactly recovers the
Hirsch length, giving a well-behaved invariant even when the groups
have torsion and the ordinary (co)homological dimension is infinite.
In the next section, more examples are discussed.

\begin{Example}
The infinite dihedral group
$D_\infty = \langle t, r \mid r^2=1, rtr^{-1}=t^{-1}\rangle$ is virtually
cyclic with Hirsch length 1. In particular, it is virtually soluble and
virtually torsion-free. Since $D_\infty$ contains a torsion element
$r$ of order 2,  its cohomological and homological dimensions over
$\mathbb{Z}$ are infinite, i.e.
$\mathrm{cd}_\mathbb{Z}D_\infty = \mathrm{hd}_\mathbb{Z}D_\infty = \infty$.
However, it follows from the above theorem that
$\mathrm{Ghd}_\mathbb{Z}D_\infty = \mathrm{Ghd}_\mathbb{Q}D_\infty =
 \mathrm{hd}_\mathbb{Q}D_\infty = h(D_\infty) = 1$.
\end{Example}

Recall that the class of elementary amenable groups is the smallest
class of groups, which is closed under extensions and directed unions,
and contains all finite and all abelian groups. The Hirsch length
provides a natural measure of the torsion-free size of groups in this
class. The relationship between Hirsch length and homological dimension
was recently determined by Kropholler and Mart\'{\i}nez-P\'{e}rez
\cite{K-MP}: For any elementary amenable group $G$ and any commutative
ring $k$, one has  $\mathrm{hd}_kG<\infty$ if and only if $G$ has no
$k$-torsion and $h(G)<\infty$; whenever these conditions hold,
$\mathrm{hd}_kG =h(G)$. Elementary amenable groups of finite Hirsch
length have a particularly rigid structure \cite{HL}: There are
characteristic subgroups
\[T\leq N\leq H\leq G \]
such that $T$ is the largest normal locally finite subgroup of $G$,
$N/T$ is torsion-free nilpotent, $H/N$ is free abelian of finite rank
and $G/H$ is finite.

In particular, torsion occurs abundantly among elementary amenable groups,
constituting an obstruction to the finiteness of the integral homological
dimension. Gorenstein homological dimension behaves quite differently. The
following result shows that the presence of torsion is harmless in the
Gorenstein setting. We note that Flores and Nucinkis have related in
\cite{FN} the Gorenstein homological dimension of elementary amenable
groups to their Bredon homological dimension $\underline{\mathrm{hd}} \, G$.

\begin{Corollary}\label{cor:Ghd-Hir}
If $G$ is an elementary amenable group with $h(G)<\infty$, then
\[\mathrm{Ghd}_\mathbb{Z}G= \mathrm{Ghd}_\mathbb{Q}G
= \mathrm{hd}_\mathbb{Q}G = \underline{\mathrm{hd}} \, G = h(G).\]
\end{Corollary}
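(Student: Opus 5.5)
The idea is to reduce to Theorem \ref{thm:Ghd-Hir} by passing to a virtually soluble quotient, and then to quote the known results on rational and Bredon homological dimension.

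Let $G$ be elementary amenable with $h(G)<\infty$, and take the Hillman--Linnell series $T\leq N\leq H\leq G$ described above. The quotient $G/T$ is virtually soluble: $N/T$ is torsion-free nilpotent, $H/N$ is free abelian of finite rank and $G/H$ is finite. Since $T$ is locally finite, $h(T)=0$, so $h(G)=h(G/T)$.

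\textbf{Step 1: Gorenstein homological dimensions.} For the upper bound, apply \cite[Proposition 4.4]{RY} to the extension $1\to T\to G\to G/T\to 1$. Together with $\mathrm{Ghd}_\mathbb{Z}T=0$ from \cite[Corollary 5.3]{KS}, this gives
\[
\mathrm{Ghd}_\mathbb{Z}G\leq \mathrm{Ghd}_\mathbb{Z}(G/T)=h(G/T)=h(G),
\]
where the middle equality is Theorem \ref{thm:Ghd-Hir}. For the lower bound, I would use $\mathrm{Ghd}_\mathbb{Q}G\leq\mathrm{Ghd}_\mathbb{Z}G$ from \cite[Proposition 3.8]{RY}, and then show directly that $\mathrm{hd}_\mathbb{Q}G=h(G)$. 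Since $G$ has no $\mathbb{Q}$-torsion, \cite{K-MP} gives $\mathrm{hd}_\mathbb{Q}G=h(G)<\infty$. Because this is finite, $\mathrm{Ghd}_\mathbb{Q}G=\mathrm{hd}_\mathbb{Q}G$. Combining,
\[
h(G)=\mathrm{Ghd}_\mathbb{Q}G\leq\mathrm{Ghd}_\mathbb{Z}G\leq h(G),
\]
which yields all the equalities except the Bredon one.

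\textbf{Step 2: the Bredon term.} It remains to show $\underline{\mathrm{hd}}\,G=h(G)$. I would cite Flores--Nucinkis \cite{FN}, who prove that for elementary amenable groups of finite Hirsch length the Bredon homological dimension equals $h(G)$. If one wants more self-containment, the same conclusion can be approached through the sandwich
\[
\mathrm{hd}_\mathbb{Q}G\leq\underline{\mathrm{hd}}\,G,
\]
which holds for any group because the rational chain complex of a Bredon projective resolution is a rational resolution by projectives induced from finite subgroups, which are $\mathbb{Q}G$-flat. The upper bound $\underline{\mathrm{hd}}\,G\leq h(G)$ is the substantive input from \cite{FN}, and I would quote it rather than reprove it.

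\textbf{Main obstacle.} The delicate point is Step 2: matching the exact hypotheses of the cited Bredon result, namely whether a countability assumption is needed. If \cite{FN} requires countable groups, I would handle the general case by writing $G$ as the directed union of its countable subgroups, each of which is elementary amenable with Hirsch length at most $h(G)$ and with $h(G)$ attained. Bredon homology commutes with directed unions, so the dimension bound passes to $G$.
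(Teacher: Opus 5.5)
Your proposal is correct and follows the paper's proof essentially step for step. The paper uses \cite{K-MP} for $h(G)=\mathrm{hd}_\mathbb{Q}G=\mathrm{Ghd}_\mathbb{Q}G$, then the Hillman--Linnell extension $1\to T\to G\to G/T\to 1$ together with \cite[Propositions 3.8 and 4.4]{RY}, \cite[Corollary 5.3]{KS} and Theorem~\ref{thm:Ghd-Hir} to get $\mathrm{Ghd}_\mathbb{Q}G\leq\mathrm{Ghd}_\mathbb{Z}G\leq\mathrm{Ghd}_\mathbb{Z}(G/T)=h(G)$, and quotes \cite[Theorem 1]{FN} for the Bredon term.

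Two small remarks on your optional asides. First, the paper quotes \cite[Theorem 1]{FN} with no countability hypothesis, so your directed-union fallback is not needed. Second, to get $\mathrm{hd}_\mathbb{Q}G\leq\underline{\mathrm{hd}}\,G$ you should rationalize a Bredon \emph{flat} resolution: a projective one only gives the weaker bound by $\underline{\mathrm{cd}}\,G$.
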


\begin{proof}
Since $h(G)$ is assumed to be finite, it follows from \cite[Theorem A]{K-MP}
that $h(G)= \mathrm{hd}_\mathbb{Q}G = \mathrm{Ghd}_\mathbb{Q}G$. Moreover,
it follows from \cite{HL} that there is an extension of groups
\[1\rightarrow T\rightarrow G\rightarrow G/T\rightarrow 1,\]
where $T$ is locally finite and $G/T$ is virtually
soluble. By Theorem \ref{thm:Ghd-Hir}, we have equalities $\mathrm{Ghd}_\mathbb{Z}(G/T)= h(G/T)= h(G)$.
Moreover, it follows from \cite[Propositions 3.8 and 4.4]{RY} that
\[ \mathrm{Ghd}_\mathbb{Q}G \leq \mathrm{Ghd}_\mathbb{Z}G \leq
   \mathrm{Ghd}_\mathbb{Z}T + \mathrm{Ghd}_\mathbb{Z}(G/T) =
   \mathrm{Ghd}_\mathbb{Z}(G/T) = h(G) . \]
Indeed, we have $\mathrm{Ghd}_\mathbb{Z}T = 0$, in view
of \cite[Corollary 5.3]{KS}. It follows that
$\mathrm{Ghd}_\mathbb{Z}G = h(G)$. Since the equality
$\underline{\mathrm{hd}} \, G = h(G)$ is proved in
\cite[Theorem 1]{FN}, the proof is complete.
\end{proof}

\begin{Corollary}\label{cor:soluble-finite-dim}
Let $G$ be a soluble group of cardinality $<\aleph_\omega$. Then the
following conditions are equivalent:
(i) $h(G)<\infty$; (ii) $\mathrm{Ghd}_\mathbb{Z}G<\infty$;
(iii) $\mathrm{Ghd}_\mathbb{Q}G<\infty$;
(iv) $\mathrm{hd}_\mathbb{Q}G<\infty$; (v) $\mathrm{cd}_\mathbb{Q}G<\infty$;
(vi) $\mathrm{dim}_G\underline{E}G<\infty$.
\end{Corollary}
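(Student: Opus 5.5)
By Theorem \ref{thm:Ghd-Hir}, conditions (i), (ii), (iii) and (iv) are already equivalent for any virtually soluble group, since $\mathrm{Ghd}_\mathbb{Z}G=\mathrm{Ghd}_\mathbb{Q}G=\mathrm{hd}_\mathbb{Q}G=h(G)$. It therefore remains to fit (v) and (vi) into this chain of equivalences. I will prove the implications (vi)$\Rightarrow$(v)$\Rightarrow$(iv) and (i)$\Rightarrow$(vi).

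For (vi)$\Rightarrow$(v), let $X$ be a finite dimensional model for $\underline{E}G$. Its cellular chain complex gives a resolution of $\mathbb{Q}$ by permutation modules $\mathbb{Q}[G/F]$ with $F$ finite. Each such module is $\mathbb{Q}G$-projective, because $|F|$ is invertible in $\mathbb{Q}$. Hence $\mathrm{cd}_\mathbb{Q}G\leq\dim X<\infty$. The implication (v)$\Rightarrow$(iv) is immediate from the general inequality $\mathrm{hd}_\mathbb{Q}G\leq\mathrm{cd}_\mathbb{Q}G$.

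For (i)$\Rightarrow$(vi), I will use the cardinality hypothesis. A soluble group of finite Hirsch length is elementary amenable, so Corollary \ref{cor:Ghd-Hir} gives $\underline{\mathrm{hd}}\,G=h(G)<\infty$. The standard passage from Bredon homological to Bredon cohomological dimension, in the style of Nucinkis, works for groups of cardinality $\aleph_n$ and loses at most $n+1$. This yields $\underline{\mathrm{cd}}\,G\leq h(G)+n+1<\infty$. By the Eilenberg--Ganea type result of L\"uck, this gives a finite dimensional model for $\underline{E}G$ (of dimension at most $\max(3,\underline{\mathrm{cd}}\,G)$). Alternatively, I could cite Mislin's result \cite[Corollary 4.5]{M} directly: for soluble groups of cardinality $<\aleph_\omega$ it states that $h(G)<\infty$ if and only if $\underline{E}G$ has a finite dimensional model. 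That already covers (i)$\Leftrightarrow$(vi), and I would use this citation as the primary route.

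The main obstacle is (i)$\Rightarrow$(vi), which is exactly where the cardinality restriction enters. The rest is formal. I intend to defer this step to Mislin's theorem and not reprove the Bredon dimension estimates.
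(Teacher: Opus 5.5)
Your proposal is correct and follows essentially the paper's proof. Theorem \ref{thm:Ghd-Hir} handles (i)--(iv), and Mislin's \cite[Corollary 4.5]{M} supplies the hard implication (i)$\Rightarrow$(vi); the only difference is that you check (vi)$\Rightarrow$(v)$\Rightarrow$(iv) directly (permutation modules $\mathbb{Q}[G/F]$ with $F$ finite are $\mathbb{Q}G$-projective, and $\mathrm{hd}_\mathbb{Q}G\leq\mathrm{cd}_\mathbb{Q}G$), whereas the paper quotes Mislin for the whole equivalence of (i), (v) and (vi). Your alternative route to (i)$\Rightarrow$(vi) is also sound: it goes through $\underline{\mathrm{hd}}\,G=h(G)$, Nucinkis' bound $\underline{\mathrm{cd}}\,G\leq\underline{\mathrm{hd}}\,G+n+1$ and L\"uck's Eilenberg--Ganea type result.
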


\begin{proof}
By Theorem~\ref{thm:Ghd-Hir}, conditions (i)--(iv) are equivalent. The
equivalence of (i), (v) and (vi) follows from \cite[Corollary 4.5]{M}.
\end{proof}

For a countable elementary amenable group $G$, Flores and Nucinkis
\cite[Corollary 4]{FN} obtain a model for $\underline{E}G$ of
dimension at most $h(G)+1$. We may reinterpret this as follows.

\begin{Corollary}\label{cor:Ghd-Bredon}
Let $G$ be a countable elementary amenable group of finite Hirsch
length. If the Bredon cohomolgical dimension of $G$ is not equal
to 2, then $G$  admits a model for $\underline{E}G$ whose dimension
is either $\mathrm{Ghd}_\mathbb{Z}G$ or $\mathrm{Ghd}_\mathbb{Z}G + 1$.
\end{Corollary}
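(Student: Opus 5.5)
The plan is to combine Corollary \ref{cor:Ghd-Hir} with the Flores--Nucinkis bound \cite[Corollary 4]{FN} and with the standard relation between the Bredon cohomological dimension and the minimal dimension of a model for $\underline{E}G$. Write $\underline{\mathrm{cd}}\,G$ for the Bredon cohomological dimension. By Corollary \ref{cor:Ghd-Hir}, since $G$ is elementary amenable with $h(G)<\infty$, we have $\mathrm{Ghd}_\mathbb{Z}G = \underline{\mathrm{hd}}\,G = h(G)$. So it suffices to show that $G$ has a model for $\underline{E}G$ of dimension $h(G)$ or $h(G)+1$.

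First I will bound $\underline{\mathrm{cd}}\,G$ on both sides. From above, \cite[Corollary 4]{FN} gives a model for $\underline{E}G$ of dimension at most $h(G)+1$ when $G$ is countable, so $\underline{\mathrm{cd}}\,G\leq h(G)+1$. From below, Bredon homological dimension never exceeds Bredon cohomological dimension, since projective Bredon modules are flat. Hence
\[
h(G)=\underline{\mathrm{hd}}\,G\leq \underline{\mathrm{cd}}\,G\leq h(G)+1 .
\]

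Next I will pass to geometric dimension using the Lück--Meintrup theorem. It states that if $\underline{\mathrm{cd}}\,G=d\neq 2$, then $G$ admits a model for $\underline{E}G$ of dimension $d$. Concretely, it gives $\max(3,d)$ in general, and for $d\neq 2$ the geometric dimension equals $d$; the case $d\le 1$ is Dunwoody's theorem. The hypothesis $\underline{\mathrm{cd}}\,G\neq 2$ is exactly what this step needs. Applying it with $d=\underline{\mathrm{cd}}\,G\in\{h(G),h(G)+1\}$ produces a model of dimension $h(G)$ or $h(G)+1$, that is, $\mathrm{Ghd}_\mathbb{Z}G$ or $\mathrm{Ghd}_\mathbb{Z}G+1$.

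The one point needing care is the degenerate case $\underline{\mathrm{cd}}\,G=0$, where $G$ is finite and a point is a model. This is consistent with the conclusion, because then $h(G)=0$. I do not expect a genuine obstacle: the argument is an assembly of cited results. The main thing to verify is that \cite[Corollary 4]{FN} applies under exactly the hypotheses stated, namely $G$ countable, elementary amenable, and of finite Hirsch length.
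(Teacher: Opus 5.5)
Your proposal is correct and follows the paper's implicit argument: the paper presents this corollary as a reinterpretation of \cite[Corollary 4]{FN} through Corollary~\ref{cor:Ghd-Hir}, namely $\mathrm{Ghd}_\mathbb{Z}G=\underline{\mathrm{hd}}\,G=h(G)\le\underline{\mathrm{cd}}\,G\le h(G)+1$, and the hypothesis $\underline{\mathrm{cd}}\,G\neq 2$ is there precisely to invoke the L\"uck--Meintrup/Dunwoody equality $\underline{\mathrm{gd}}\,G=\underline{\mathrm{cd}}\,G$, as you do. If \cite[Corollary 4]{FN} really gives a model of dimension at most $h(G)+1$, as the paper paraphrases it, then the bound $\dim X\ge\underline{\mathrm{cd}}\,G\ge h(G)$ for every model $X$ already finishes the proof; your route through $\underline{\mathrm{cd}}\,G$ still works if that reference only yields $\underline{\mathrm{cd}}\,G\le h(G)+1$.
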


By \cite[Theorem 7.10]{Bie}, if $G$ is a countable torsion-free soluble
group, then $h(G)\leq \mathrm{cd}_\mathbb{Z}G \leq h(G)+1$. Thus, in the
classical setting, the cohomological dimension is not precisely determined
by the Hirsch length, in contrast with the homological dimension. A similar
distinction persists in the Gorenstein setting. While the Gorenstein homological
dimension of a virtually soluble group is equal to the Hirsch length, the
Gorenstein cohomological dimension is more delicate and may differ from it.
It is therefore natural to ask how much can $\mathrm{Gcd}_\mathbb{Z}G$
differ from the Hirsch length. We address this question for certain virtually
soluble groups below.

\begin{Theorem}\label{thm:Gcd-Hir}
If $G$ is a virtually soluble group of cardinality $\aleph_n$, then
\[h(G)\leq \mathrm{Gcd}_\mathbb{Q}G = \mathrm{cd}_\mathbb{Q}G
\leq \mathrm{Gcd}_\mathbb{Z}G\leq h(G)+n+1.\]
\end{Theorem}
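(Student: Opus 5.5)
We prove the chain from left to right. It is vacuous unless $h(G)<\infty$: if $h(G)=\infty$, then $\mathrm{cd}_\mathbb{Q}G\geq \mathrm{hd}_\mathbb{Q}G=h(G)=\infty$ by Theorem \ref{thm:Ghd-Hir}, and the upper bound is trivially true. So assume $h(G)<\infty$.

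\textbf{Lower bound.} For the first inequality, use $\mathrm{Ghd}_\mathbb{Q}G\leq \mathrm{Gcd}_\mathbb{Q}G$, which holds because $\mathbb{Q}$ is a Gorenstein ring (Section 2). Combined with Theorem \ref{thm:Ghd-Hir}, this gives $h(G)=\mathrm{Ghd}_\mathbb{Q}G\leq \mathrm{Gcd}_\mathbb{Q}G$.

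\textbf{The middle relations.} The equality $\mathrm{Gcd}_\mathbb{Q}G=\mathrm{cd}_\mathbb{Q}G$ and the inequality $\mathrm{Gcd}_\mathbb{Q}G\leq \mathrm{Gcd}_\mathbb{Z}G$ hold for ${\scriptstyle\mathbf{LH}}\mathfrak F$-groups by \cite[Theorems 3.2, 3.5]{T14}, as used in Proposition \ref{prop:LHF}. Virtually soluble groups are elementary amenable, hence lie in ${\scriptstyle\mathbf{LH}}\mathfrak F$.

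\textbf{Upper bound.} The main step is $\mathrm{Gcd}_\mathbb{Z}G\leq h(G)+n+1$. Mimic the proof of Theorem \ref{thm:Ghd-Hir}. Pass to the normal soluble subgroup $N_n$ of finite index given by Mal'cev's series, using Serre's theorem for $\mathrm{Gcd}$ \cite[Theorem 3.3]{R}, so that $\mathrm{Gcd}_\mathbb{Z}G=\mathrm{Gcd}_\mathbb{Z}N_n$. Then use the extension $1\to N_0\to N_n\to N_n/N_0\to 1$, with $N_0$ locally finite and $Q=N_n/N_0$ torsion-free soluble with $h(Q)=h(G)$. We need a subadditivity result for Gorenstein cohomological dimension over extensions, namely
\[
\mathrm{Gcd}_\mathbb{Z}N_n\leq \mathrm{Gcd}_\mathbb{Z}N_0+\mathrm{cd}_\mathbb{Z}Q,
\]
which is available for extensions with quotient of finite cohomological dimension (e.g.\ \cite{ET18} or \cite{R}).

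\textbf{Bounding the two terms.} For the locally finite group $N_0$ of cardinality at most $\aleph_n$, we have $\mathrm{cd}_\mathbb{Q}N_0\leq n$, and indeed $\mathrm{Gcd}_\mathbb{Z}N_0\leq n$. To see this, write $N_0$ as a directed union of finite subgroups and use the Gorenstein analogue of the Holt / Kropholler–Mart\'{\i}nez-P\'erez bound: a directed union of $\aleph_n$ groups of $\mathrm{Gcd}$ zero has $\mathrm{Gcd}\leq n$. Concretely, the $\aleph_n$-union gives a resolution of $\mathbb{Z}$ of length $n$ by modules induced from finite subgroups, each Gorenstein projective (permutation modules $\mathbb{Z}[N_0/F]$ with $F$ finite are Gorenstein projective over $\mathbb{Z}$). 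For the quotient, Bieri's theorem for countable torsion-free soluble groups generalizes to $\mathrm{cd}_\mathbb{Z}Q\leq h(Q)+1$ only in the countable case. To handle uncountable $Q$, we should instead write $Q$ itself as a directed union of countable subgroups, each of cd at most $h+1$, giving $\mathrm{cd}_\mathbb{Z}Q\leq h(G)+1+n$. This would overcount the $n$ twice.

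\textbf{Main obstacle.} A better plan is to avoid the extension splitting. Write $G$ (equivalently $N_n$) as a directed union of countable subgroups $G_\alpha$, all virtually soluble with $h(G_\alpha)\leq h(G)$, and first prove the countable case $\mathrm{Gcd}_\mathbb{Z}G_\alpha\leq h(G)+1$. In the countable case, $N_0$ is countable locally finite, so $\mathrm{Gcd}_\mathbb{Z}N_0\leq 1$. This is still too weak for the extension bound, since it gives $h+2$ rather than $h+1$. So in the countable case use instead the Flores–Nucinkis model of $\underline EG$ of dimension at most $h(G)+1$ \cite{FN}, together with $\mathrm{Gcd}_\mathbb{Z}G\leq \underline{\mathrm{cd}}\,G$. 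Then apply the Gorenstein directed-union bound (a group of cardinality $\aleph_n$ that is a directed union of subgroups of $\mathrm{Gcd}\leq d$ has $\mathrm{Gcd}\leq d+n$, valid for ${\scriptstyle\mathbf{LH}}\mathfrak F$ groups via finiteness of $\mathrm{Gcd}$ and a Kropholler-style argument) to get $h(G)+1+n$. Establishing this directed-union inequality for $\mathrm{Gcd}_\mathbb{Z}$, presumably via the characterization (iii) of Section 2 with a module $A$ built compatibly along the union, is the step I expect to be hardest.
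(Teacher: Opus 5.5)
\textbf{There is a genuine gap in the upper bound.} Your lower bound and the two middle relations are fine, and they are argued as in the paper. The upper bound, however, is not proved.

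Your final plan rests on a ``Gorenstein directed-union bound'' that you do not establish, and which is false as you state it. An infinite countable locally finite group is a directed union of finite subgroups, all with $\mathrm{Gcd}_\mathbb{Z}=0$. Yet its $\mathrm{Gcd}_\mathbb{Z}$ equals $1$, because $1=\mathrm{cd}_\mathbb{Q}=\mathrm{Gcd}_\mathbb{Q}\leq\mathrm{Gcd}_\mathbb{Z}$. The same example refutes your earlier claim that a locally finite $N_0$ of cardinality $\aleph_n$ has $\mathrm{cd}_\mathbb{Q}N_0\leq n$ and $\mathrm{Gcd}_\mathbb{Z}N_0\leq n$. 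Indeed, you later use $\mathrm{Gcd}_\mathbb{Z}N_0\leq 1$ in the countable case, which contradicts it.

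What your strategy actually needs is a chain lemma: if $G$ is the union of a continuous well-ordered chain of subgroups $G_\alpha$ with $\mathrm{Gcd}_\mathbb{Z}G_\alpha\leq d$, then $\mathrm{Gcd}_\mathbb{Z}G\leq d+1$. Iterating this over $\aleph_1,\dots,\aleph_n$, starting from countable subgroups, would give $h(G)+n+1$. In the countable case your Flores--Nucinkis argument with $\mathrm{Gcd}_\mathbb{Z}\leq\underline{\mathrm{cd}}$ does give $h(G)+1$. But proving the chain lemma requires a Gorenstein form of Auslander's lemma for transfinite filtrations, for instance of the augmentation ideal of $G$ by the induced augmentation ideals of the $G_\alpha$. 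That is real work, and none of it is in the proposal. Appealing to finiteness of $\mathrm{Gcd}_\mathbb{Z}G$ for the ${\scriptstyle\mathbf{LH}}\mathfrak F$-group $G$ would be circular, since that finiteness is part of what the upper bound asserts.

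\textbf{Missing idea.} No union argument is needed. Theorem \ref{thm:Ghd-Hir} already gives $\mathrm{Ghd}_\mathbb{Z}G=h(G)$ with no countability hypothesis. So the $h(G)$-th syzygy $K$ of $\mathbb{Z}$ in a projective $\mathbb{Z}G$-resolution is Gorenstein flat. The cardinality then enters through the ring rather than through the subgroup lattice:
\begin{itemize}
\item since $|\mathbb{Z}G|=\aleph_n$, Simson's theorem \cite{Sim} says every flat $\mathbb{Z}G$-module has projective dimension at most $n+1$;
\item \cite[Proposition 15]{DE} then shows every Gorenstein flat $\mathbb{Z}G$-module has PGF-dimension, hence Gorenstein projective dimension, at most $n+1$.
\end{itemize}
Therefore $\mathrm{Gcd}_\mathbb{Z}G\leq h(G)+\mathrm{Gpd}_{\mathbb{Z}G}K\leq h(G)+n+1$. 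This also settles the countable case directly ($n=0$), without Flores--Nucinkis or Bredon dimensions.
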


\begin{proof}
For any group $G$, we have
$\mathrm{Ghd}_{\mathbb Q}G\leq\mathrm{Gcd}_{\mathbb Q}G$. Hence, for
a virtually soluble group $G$, Theorem \ref{thm:Ghd-Hir} implies that
$h(G) = \mathrm{Ghd}_{\mathbb Q}G \leq \mathrm{Gcd}_{\mathbb Q}G$.
Since virtually soluble groups belong to Kropholler's class
${\scriptstyle\mathbf{LH}}\mathfrak F$, it follows from \cite[Theorem 3.5(1)]{T14}
that $\mathrm{Gcd}_{\mathbb Q}G=\mathrm{cd}_{\mathbb Q}G$. We also have
$\mathrm{Gcd}_{\mathbb Q}G \leq \mathrm{Gcd}_{\mathbb Z}G$
(cf.\ \cite[Proposition 2.1]{ET18}) and hence
$h(G)\leq \mathrm{Gcd}_{\mathbb Q}G = \mathrm{cd}_{\mathbb Q}G \leq
 \mathrm{Gcd}_{\mathbb Z}G$.

It remains to establish the upper bound. There is nothing to prove if
$h(G)=\infty$, so assume that $h(G)$ is finite. Since the ring $\mathbb{Z}G$
has cardinality $\aleph_n$, Simson's theorem \cite{Sim} implies that any flat
$\mathbb{Z}G$-module has projective dimension $\leq n+1$. It follows from
\cite[Proposition 15]{DE} that any Gorenstein flat $\mathbb{Z}G$-module has
(PGF-dimension and hence) Gorenstein projective dimension $\leq n+1$. Theorem
\ref{thm:Ghd-Hir} implies that $\mathrm{Ghd}_{\mathbb Z}G=h(G)$ and hence the
$h(G)$-th syzygy $K$ of the trivial module $\mathbb{Z}$ in a
$\mathbb{Z}G$-projective resolution is Gorenstein flat. It follows that
$\mbox{Gpd}_{\mathbb{Z}G}K \leq n+1$ and hence
$\mbox{Gcd}_{\mathbb{Z}}G = \mbox{Gpd}_{\mathbb{Z}G} \mathbb{Z} \leq
 h(G) + \mbox{Gpd}_{\mathbb{Z}G}K \leq h(G)+n+1$.
\end{proof}

\begin{Remark}
Combining Theorems \ref{thm:Ghd-Hir} and \ref{thm:Gcd-Hir}, we obtain, for
every countable virtually soluble group $G$,
$h(G) = \mathrm{Ghd}_{\mathbb{Z}}G \leq \mathrm{Gcd}_{\mathbb{Z}}G \leq h(G)+1$.
This parallels the classical distinction between homological and cohomological
dimensions for soluble groups and is also closely related to Brown's conjecture
on the classifying space for proper actions. Recall that, if $G$ has finite virtual
cohomological dimension, then $\mathrm{Gcd}_\mathbb{Z}G = \mathrm{vcd}_\mathbb{Z}G$
(Proposition \ref{prop:vcd-vhd}(i)). Brown's conjecture asks whether such a group
admits a model for $\underline{E}G$ of dimension $\mathrm{vcd}_\mathbb{Z}G$.
Mart\'{\i}nez-P\'{e}rez and Nucinkis \cite[Corollary 5.2]{MP-N10} proved this
for virtually torsion-free elementary amenable groups. The above inequalities
place the Gorenstein cohomological dimension within the same one-dimensional
range determined by $h(G)$, while $\mathrm{Ghd}_\mathbb{Z}G$ always realizes
the lower endpoint.
\end{Remark}

Talelli \cite{T14} conjectured that
$\mathrm{Gcd}_\mathbb{Z}G = \mathrm{Gcd}_\mathbb{Q}G$ for every elementary
amenable group $G$. We do not establish the conjectured equality, but the
preceding result yields uniform bounds on the possible difference between
the integral and rational Gorenstein cohomological dimensions. In particular,
we obtain the following estimates for virtually soluble and, more generally,
elementary amenable groups.

\begin{Corollary}\label{cor:Tal}
Let $G$ be a countable group. Then,
$\mathrm{Gcd}_\mathbb{Z}G \leq \mathrm{Gcd}_\mathbb{Q}G + 1$ if $G$ is virtually
soluble, and $\mathrm{Gcd}_\mathbb{Z}G \leq \mathrm{Gcd}_\mathbb{Q}G + 2$ if $G$
is elementary amenable of finite Hirsch length.
\end{Corollary}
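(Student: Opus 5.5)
For the virtually soluble case I will combine Theorem~\ref{thm:Gcd-Hir} with $n=0$ and the lower bound it supplies. A countable group has cardinality $\aleph_0$, so Theorem~\ref{thm:Gcd-Hir} gives
\[
h(G)\leq \mathrm{Gcd}_\mathbb{Q}G \leq \mathrm{Gcd}_\mathbb{Z}G\leq h(G)+1 .
\]
Hence $\mathrm{Gcd}_\mathbb{Z}G\leq h(G)+1\leq \mathrm{Gcd}_\mathbb{Q}G+1$. This holds trivially if $h(G)=\infty$, since then $\mathrm{Gcd}_\mathbb{Q}G=\infty$.

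For an elementary amenable group $G$ of finite Hirsch length I will imitate the upper-bound argument of Theorem~\ref{thm:Gcd-Hir}, using Corollary~\ref{cor:Ghd-Hir} in place of Theorem~\ref{thm:Ghd-Hir}.

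\textbf{Upper bound.} Corollary~\ref{cor:Ghd-Hir} gives $\mathrm{Ghd}_\mathbb{Z}G=h(G)$. So the $h(G)$-th syzygy $K$ of $\mathbb{Z}$ in a $\mathbb{Z}G$-projective resolution is Gorenstein flat. Since $G$ is countable, $\mathbb{Z}G$ is countable, and Simson's theorem \cite{Sim} gives $\mathrm{pd}\leq 1$ for flat $\mathbb{Z}G$-modules. Then \cite[Proposition 15]{DE} gives $\mathrm{Gpd}_{\mathbb{Z}G}K\leq 1$, and therefore $\mathrm{Gcd}_\mathbb{Z}G\leq h(G)+1$.

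\textbf{Lower bound.} It remains to show $h(G)\leq \mathrm{Gcd}_\mathbb{Q}G+1$. Elementary amenable groups lie in ${\scriptstyle\mathbf{LH}}\mathfrak F$, and $\mathrm{Ghd}_\mathbb{Q}G\leq \mathrm{Gcd}_\mathbb{Q}G$ holds for any group. Corollary~\ref{cor:Ghd-Hir} gives $\mathrm{Ghd}_\mathbb{Q}G=h(G)$. Together these yield $h(G)\leq\mathrm{Gcd}_\mathbb{Q}G$, so in fact
\[
\mathrm{Gcd}_\mathbb{Z}G\leq h(G)+1\leq \mathrm{Gcd}_\mathbb{Q}G+1,
\]
which is better than the claimed $+2$.

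The main obstacle is whether this sharper bound is really legitimate, since the authors state $+2$. The weak point is the step $\mathrm{Ghd}_\mathbb{Z}G=h(G)$ from Corollary~\ref{cor:Ghd-Hir}, which relies on \cite[Proposition 4.4]{RY} and \cite{HL}. I will double-check that it applies to arbitrary countable elementary amenable groups of finite Hirsch length.

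If it fails, I will use the safer route through the extension $1\to T\to G\to G/T\to 1$ with $T$ locally finite:
\begin{enumerate}
\item[(a)] $G/T$ is countable and virtually soluble, so the first part gives $\mathrm{Gcd}_\mathbb{Z}(G/T)\leq h(G)+1$.
\item[(b)] $T$ is countable and locally finite, so $\mathrm{Gcd}_\mathbb{Z}T\leq 1$.
\item[(c)] Subadditivity of $\mathrm{Gcd}$ over extensions then gives $\mathrm{Gcd}_\mathbb{Z}G\leq h(G)+2$.
\item[(d)] Combining with $h(G)\leq\mathrm{Gcd}_\mathbb{Q}G$ gives the stated bound $\mathrm{Gcd}_\mathbb{Z}G\leq\mathrm{Gcd}_\mathbb{Q}G+2$.
\end{enumerate}
Either route proves the corollary.
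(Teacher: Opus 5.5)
Your proposal is correct. The virtually soluble case and your fallback (a)--(d) are exactly the paper's proof. The paper passes to the Hillman--Linnell extension $1\to T\to G\to G/T\to 1$ and applies Theorem~\ref{thm:Gcd-Hir} to $G/T$. It then adds $\mathrm{Gcd}_\mathbb{Z}T\le 1$ via \cite[Corollary 3.4 and Proposition 2.9]{ET18}, and closes with $h(G)=\mathrm{Ghd}_\mathbb{Q}G\le\mathrm{Gcd}_\mathbb{Q}G$.

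Your main route for the elementary amenable case is genuinely different, and it is legitimate. The argument in the proof of Theorem~\ref{thm:Gcd-Hir} (Simson's theorem plus \cite[Proposition 15]{DE}) uses solubility only through the value of $\mathrm{Ghd}_\mathbb{Z}G$. What it actually shows is $\mathrm{Gcd}_\mathbb{Z}G\le\mathrm{Ghd}_\mathbb{Z}G+n+1$ for any group of cardinality $\aleph_n$. Corollary~\ref{cor:Ghd-Hir} is stated for all elementary amenable groups of finite Hirsch length, with no countability hypothesis, so your worry about it is unfounded. You therefore get $\mathrm{Gcd}_\mathbb{Z}G\le h(G)+1\le \mathrm{Gcd}_\mathbb{Q}G+1$, which is sharper than the stated $+2$.

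The paper loses a dimension because it pays for countability twice: once inside Theorem~\ref{thm:Gcd-Hir} for $G/T$, and again in $\mathrm{Gcd}_\mathbb{Z}T\le 1$. You absorb the locally finite kernel on the homological side, where it costs nothing ($\mathrm{Ghd}_\mathbb{Z}T=0$), and convert from $\mathrm{Ghd}$ to $\mathrm{Gcd}$ only once, at the end. Your route needs no ingredient the paper avoids. The paper's bound for $G/T$ already rests on Theorem~\ref{thm:Ghd-Hir}, hence on the same subadditivity of $\mathrm{Ghd}$ from \cite[Proposition 4.4]{RY} that underlies Corollary~\ref{cor:Ghd-Hir}. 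The paper's version merely treats Theorem~\ref{thm:Gcd-Hir} as a black box.

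Your sharper bound is also consistent with the model for $\underline{E}G$ of dimension at most $h(G)+1$ from \cite[Corollary 4]{FN}, since $\mathrm{Gcd}_\mathbb{Z}G$ never exceeds the dimension of a model for $\underline{E}G$.

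Two cosmetic points:
\begin{enumerate}
\item Your lower-bound step announces $h(G)\le\mathrm{Gcd}_\mathbb{Q}G+1$, but what you prove, and what is needed, is $h(G)\le\mathrm{Gcd}_\mathbb{Q}G$.
\item The appeal to Kropholler's class is superfluous, because $\mathrm{Ghd}_\mathbb{Q}G\le\mathrm{Gcd}_\mathbb{Q}G$ holds for every group.
\end{enumerate}
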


\begin{proof}
The first assertion follows directly from Theorem \ref{thm:Gcd-Hir}.
Let $G$ be an elementary amenable and countable group with $h(G)<\infty$.
Hillman and Linnell \cite{HL} have shown that $G$ has a unique maximal
locally finite normal subgroup $T$, such that $G/T$ is virtually soluble.
Of course, the group $G/T$ is countable and hence Theorem \ref{thm:Gcd-Hir}
implies that
\[ \mathrm{Gcd}_\mathbb{Z}(G/T) \leq h(G/T)+1 = h(G)+1 . \]
Applying \cite[Proposition 2.9]{ET18}, we conclude that
$\mathrm{Gcd}_\mathbb{Z}G \leq
 \mathrm{Gcd}_\mathbb{Z}(G/T) + \mathrm{Gcd}_\mathbb{Z}T$.
Since $G$ is countable, so is its locally finite subgroup $T$.
It follows from \cite[Corollary 3.4]{ET18} that
$\mathrm{Gcd}_\mathbb{Z}T\leq 1$ and hence
\[ \mathrm{Gcd}_\mathbb{Z}G \leq   \mathrm{Gcd}_\mathbb{Z}(G/T) + 1 \leq
   h(G) + 2 . \]
Using Corollary 4.4, we have
$h(G) = \mathrm{Ghd}_\mathbb{Q}G \leq \mathrm{Gcd}_\mathbb{Q}G$ and hence
$\mathrm{Gcd}_\mathbb{Z}G \leq \mathrm{Gcd}_\mathbb{Q}G + 2$.
\end{proof}

We shall conclude this Section, by stating a few equivalent conditions
for virtually soluble groups, that may be viewed as a Gorenstein complement
to Kropholler's result \cite[Theorem]{K86}, \cite[Theorem 1.1]{MP-N10}.

\begin{Proposition}\label{prop:v-dual}
If $G$ is a virtually soluble group, the following conditions are equivalent:
\begin{enumerate}
\item[(i)] $\mathrm{Gcd}_\mathbb{Z}G = \mathrm{Ghd}_\mathbb{Z}G<\infty$
and $G$ is virtually torsion-free,
\item[(ii)] $G$ is a virtual duality group over $\mathbb{Z}$ and
\item[(iii)] $G$ is of type $FP_\infty$ over $\mathbb{Z}$.
\end{enumerate}
\end{Proposition}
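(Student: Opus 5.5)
We prove the cycle (iii)$\Rightarrow$(ii)$\Rightarrow$(i)$\Rightarrow$(iii), using Kropholler's theorem \cite{K86} together with its refinement \cite[Theorem 1.1]{MP-N10}. For soluble (hence virtually soluble) groups these results say: $G$ is of type $FP_\infty$ over $\mathbb{Z}$ if and only if $G$ is constructible, equivalently polycyclic-by-finite-like with finite Hirsch length, and in that case $G$ is virtually torsion-free and virtually a duality group.

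\emph{(iii)$\Rightarrow$(ii).} Let $G$ be virtually soluble of type $FP_\infty$. By Kropholler's theorem, $G$ has finite Hirsch length and is virtually torsion-free. Choose a torsion-free soluble subgroup $N$ of finite index. Then $N$ is of type $FP_\infty$, and $\mathrm{cd}_\mathbb{Z}N<\infty$ because $\mathrm{hd}_\mathbb{Z}N=h(N)$ by \cite[Theorem 7.10]{Bie}. Hence $N$ is of type $FP$. By the cited results of Kropholler and Mart\'{\i}nez-P\'erez--Nucinkis, a soluble group of type $FP$ is a duality group. So $G$ is a virtual duality group.

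\emph{(ii)$\Rightarrow$(i).} Any virtual duality group is virtually torsion-free by definition. Its torsion-free finite-index subgroup $N$ satisfies $\mathrm{cd}_\mathbb{Z}N=\mathrm{hd}_\mathbb{Z}N=n<\infty$. Proposition \ref{prop:vcd-vhd} then gives
\[\mathrm{Gcd}_\mathbb{Z}G=\mathrm{vcd}_\mathbb{Z}G=n=\mathrm{vhd}_\mathbb{Z}G=\mathrm{Ghd}_\mathbb{Z}G,\]
which is (i). Alternatively, one can apply Corollary \ref{cor:Ghd=Gcd-VD} directly and observe that finiteness follows from $\mathrm{vcd}<\infty$.

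\emph{(i)$\Rightarrow$(iii).} This is the main step. Let $N$ be a torsion-free soluble subgroup of finite index. By Theorem \ref{thm:Ghd-Hir},
\[\mathrm{Ghd}_\mathbb{Z}G=h(G)=h(N)=\mathrm{hd}_\mathbb{Z}N.\]
By Proposition \ref{prop:vcd-vhd}(i) and Serre's theorem,
\[\mathrm{Gcd}_\mathbb{Z}G=\mathrm{vcd}_\mathbb{Z}G=\mathrm{cd}_\mathbb{Z}N,\]
and this is finite because $\mathrm{Gcd}_\mathbb{Z}G<\infty$ is assumed and vcd is then finite (the hypothesis forces it). Hypothesis (i) therefore gives $\mathrm{cd}_\mathbb{Z}N=h(N)<\infty$.

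We then need the converse direction of Kropholler's theorem. For a torsion-free soluble group $N$ with $h(N)<\infty$ and $\mathrm{cd}_\mathbb{Z}N=h(N)$, we must show $N$ is of type $FP_\infty$. This is precisely the characterization in \cite[Theorem 1.1]{MP-N10}, building on \cite{K86}: a soluble group with $\mathrm{cd}=h<\infty$ is constructible, hence of type $FP_\infty$. The classical example is $\mathbb{Q}$, with $h=1$ and $\mathrm{cd}=2$, where this equality fails. Finally, $G$ is of type $FP_\infty$ because $N$ has finite index in $G$.

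The obstacle is invoking the correct form of that cited characterization. If it is phrased only for countable groups, note that $\mathrm{cd}_\mathbb{Z}N=h(N)$ already forces countability: for an uncountable soluble group, cardinality arguments push $\mathrm{cd}$ above $h$.
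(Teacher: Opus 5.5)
Your argument uses the same ingredients as the paper's. You pass to a torsion-free soluble subgroup $N$ of finite index and use $\mathrm{hd}_\mathbb{Z}N=h(N)$ \cite[Theorem 7.10]{Bie} together with Theorem~\ref{thm:Ghd-Hir}. You then identify the Gorenstein dimensions of $G$ with the ordinary dimensions of $N$, and invoke Kropholler's theorem that a soluble group with $\mathrm{cd}=\mathrm{hd}<\infty$ is of type $FP$ (indeed a duality group). Running the cycle (iii)$\Rightarrow$(ii)$\Rightarrow$(i)$\Rightarrow$(iii), and extracting $FP_\infty$ rather than duality from (i), is only a reorganization. In (ii)$\Rightarrow$(i) you are more careful than the paper's write-up, because you work with a torsion-free subgroup of finite index. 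A soluble subgroup of finite index in a virtual duality group may contain torsion (e.g.\ $\mathbb{Z}\times C_2$ inside itself), and then its $\mathrm{cd}_\mathbb{Z}$ is infinite.

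The step that does not stand as written is the finiteness of $\mathrm{vcd}_\mathbb{Z}G$ in (i)$\Rightarrow$(iii). Proposition~\ref{prop:vcd-vhd}(i) gives only $\mathrm{Gcd}_\mathbb{Z}G\leq\mathrm{vcd}_\mathbb{Z}G$, with equality \emph{after} one knows $\mathrm{vcd}_\mathbb{Z}G<\infty$. So deducing $\mathrm{vcd}_\mathbb{Z}G<\infty$ from $\mathrm{Gcd}_\mathbb{Z}G=\mathrm{vcd}_\mathbb{Z}G$ is circular. That a torsion-free group of finite Gorenstein cohomological dimension has finite cohomological dimension is not a formal fact. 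It is available here because soluble groups lie in ${\scriptstyle\mathbf{LH}}\mathfrak F$, and this is exactly what the paper uses: \cite[Theorem 3.5(ii)]{T14} gives $\mathrm{cd}_\mathbb{Z}N=\mathrm{Gcd}_\mathbb{Z}N$, and \cite[Theorem 3.3]{R} gives $\mathrm{Gcd}_\mathbb{Z}N=\mathrm{Gcd}_\mathbb{Z}G$.

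Within your setup there is also an elementary repair. First, $h(N)=\mathrm{Ghd}_\mathbb{Z}G<\infty$. Second, a torsion-free soluble group of finite Hirsch length is countable. In Mal'cev's series, as in the proof of Theorem~\ref{thm:Ghd-Hir}, the locally finite bottom term is trivial. The other factors are torsion-free abelian of finite rank, hence subgroups of some $\mathbb{Q}^r$, and the top factor is finite. Then \cite[Theorem 7.10]{Bie} gives $\mathrm{cd}_\mathbb{Z}N\leq h(N)+1<\infty$. This also disposes of your closing countability worry, for which ``cardinality arguments push $\mathrm{cd}$ above $h$'' is not an argument.

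Two minor points:
\begin{enumerate}
\item[(a)] In (iii)$\Rightarrow$(ii), finiteness of $\mathrm{cd}_\mathbb{Z}N$ comes from the bound $h(N)+1$ for countable (here finitely generated) $N$, not from $\mathrm{hd}_\mathbb{Z}N=h(N)$.
\item[(b)] ``Polycyclic-by-finite-like'' misdescribes constructible groups, which include non-polycyclic ones such as $BS(1,n)$.
\end{enumerate}
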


\begin{proof}
(i)$\Rightarrow$(ii): Let $H \leq G$ be a torsion-free soluble subgroup
of finite index; then, we have
\[ \mathrm{hd}_\mathbb{Z}H = h(H) = h(G) = \mathrm{Ghd}_{\mathbb{Z}} G . \]
In the above chain of equalities, the first one follows from
\cite[Theorem 7.10]{Bie} and the last one from Theorem \ref{thm:Ghd-Hir}.
On the other hand, \cite[Theorem 3.5(ii)]{T14} implies that
$\mathrm{cd}_\mathbb{Z}H = \mathrm{Gcd}_{\mathbb{Z}} H$ and hence
\cite[Theorem 3.3]{R} shows that
$\mathrm{cd}_\mathbb{Z}H = \mathrm{Gcd}_{\mathbb{Z}} G$. Since
$\mathrm{Gcd}_\mathbb{Z}G = \mathrm{Ghd}_\mathbb{Z}G<\infty$, it follows
that $\mathrm{cd}_\mathbb{Z}H = \mathrm{hd}_\mathbb{Z}H<\infty$. Invoking
\cite[Theorem]{K86}, we now conclude that $H$ is a duality group, so that
$G$ is a virtual duality group.

(ii)$\Rightarrow$(iii): By assumption, $G$ has a subgroup $H$ of finite
index, which is a duality group; then, $H$ is of type $FP_\infty$. Since
$H$ has finite index in $G$, it follows that $G$ is also of type $FP_\infty$.

(ii)$\Rightarrow$(i): Let $H \leq G$ be a soluble subgroup of finite index.
Then, $H$ is also of type $FP_\infty$ and hence \cite[Theorem]{K86} implies
that $\mathrm{cd}_\mathbb{Z}H = \mathrm{hd}_\mathbb{Z}H<\infty$. It follows
that $H$ is torsion-free (so that $G$ is virtually torsion-free) and
$\mathrm{Gcd}_\mathbb{Z}H = \mathrm{Ghd}_\mathbb{Z}H <\infty$. Invoking
\cite[Theorem 3.3]{R} and Proposition 3.4, we conclude that
$\mathrm{Gcd}_\mathbb{Z}G = \mathrm{Ghd}_\mathbb{Z}G <\infty$.
\end{proof}

\begin{Corollary}\label{cor:dimEG}
Let $G$ be a virtually soluble and virtually torsion-free group with
$\mathrm{Gcd}_\mathbb{Z}G = \mathrm{Ghd}_\mathbb{Z}G <\infty$. Then,
$G$ admits a finitely dominated model for $\underline{E}G$.
\end{Corollary}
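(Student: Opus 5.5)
By Proposition \ref{prop:v-dual}, the hypotheses say exactly that $G$ is a virtually soluble group of type $FP_\infty$ over $\mathbb{Z}$ and a virtual duality group. The plan is to pass to a torsion-free soluble subgroup of finite index, show it is of type $F$ up to finite index considerations, and then invoke the known finiteness results for $\underline{E}G$ of virtually soluble groups.

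\textbf{Step 1: finite Bredon cohomological dimension.} Fix a torsion-free soluble subgroup $H\leq G$ of finite index. As in the proof of Proposition \ref{prop:v-dual}, Kropholler's theorem \cite{K86} shows that $H$ is of type $FP_\infty$ with $\mathrm{cd}_\mathbb{Z}H=h(H)<\infty$. In particular $\mathrm{vcd}_\mathbb{Z}G<\infty$. By the result of Mart\'{\i}nez-P\'{e}rez and Nucinkis \cite[Corollary 5.2]{MP-N10}, cited in the Remark above, $G$ then admits a model for $\underline{E}G$ of dimension $\mathrm{vcd}_\mathbb{Z}G$. So $\underline{E}G$ is finite dimensional, of dimension $\mathrm{Gcd}_\mathbb{Z}G$ by Proposition \ref{prop:vcd-vhd}(i).

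\textbf{Step 2: Bredon finiteness type.} We need $G$ to be of type $\underline{FP}_\infty$, equivalently $\underline{F}_\infty$ once combined with Step 1. The key input is that $G$ has finitely many conjugacy classes of finite subgroups, and that their normalizers (equivalently, centralizers up to finite index) are again of type $FP_\infty$. For virtually soluble groups of type $FP_\infty$, which are virtually polycyclic-by-finite-rank constructible groups, this is exactly the content of \cite[Theorem 1.1]{MP-N10}, the Bredon complement to Kropholler's theorem. I would quote it in the form: a virtually soluble group of type $FP_\infty$ has a cocompact model for $\underline{E}G$ up to finite domination. In other words, $G$ is of type $\underline{FP}_\infty$.

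\textbf{Step 3: finite domination.} Combine Steps 1 and 2. A group with finite Bredon cohomological dimension (at least $3$), of type $\underline{FP}$, and of type $\underline{F}_2$ (finitely presented Weyl groups) admits a finite-dimensional model with finite $n$-skeleton. By the Bredon analogue of the Wall/Eilenberg--Ganea argument, this model is then finitely dominated. The low-dimensional cases need separate treatment:
\begin{itemize}
\item If $\mathrm{Gcd}_\mathbb{Z}G\leq 1$, then $G$ is finite or virtually free. In that case there is a finite tree model, by Proposition \ref{prop:LHF} together with finite generation.
\item Dimension $2$ is handled by adding a dimension, which is harmless for finite domination.
\end{itemize}

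\textbf{Main obstacle.} I expect the main difficulty to be Step 2: checking that type $FP_\infty$ over $\mathbb{Z}$ upgrades to Bredon type $\underline{FP}_\infty$. This requires controlling the finite subgroups and the finiteness properties of their Weyl groups. If \cite{MP-N10} does not state this directly, I would argue it by hand. Since $H$ is constructible, $G$ is virtually a constructible group. Centralizers of finite subgroups in such groups are again constructible, because they are the fixed points of a finite group acting on a constructible group. Hence these centralizers are of type $FP_\infty$, and there are finitely many conjugacy classes of finite subgroups.
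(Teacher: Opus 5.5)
Your core argument is the paper's: Proposition~\ref{prop:v-dual} turns the hypothesis into type $FP_\infty$ over $\mathbb{Z}$, and the Mart\'{\i}nez-P\'erez--Nucinkis result in the form you quote in Step 2 (a finitely dominated model for $\underline{E}G$) already is the conclusion; the paper cites it as \cite[Theorem 4.1]{MP-N10}, not Theorem 1.1. So Steps 1 and 3 and the low-dimensional case analysis are redundant, and you should drop them, because they are shakier than the main line. For instance, the finite dimensionality from Step 1 does not upgrade $\underline{FP}_\infty$ to $\underline{F}_\infty$, and your ``by hand'' fallback only asserts two things: that there are finitely many conjugacy classes of finite subgroups, and that fixed points of finite groups acting on constructible groups are constructible. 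Those two facts are precisely the nontrivial content of \cite{MP-N10}.
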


\begin{proof}
In view of Proposition \ref{prop:v-dual}, the group $G$ is of type
$FP_\infty$ over $\mathbb{Z}$. The result then follows from
\cite[Theorem 4.1]{MP-N10}.
\end{proof}


\section{Examples and applications}
\label{sec:eg-ap}

\noindent In this section, we give some examples and applications, 
regarding modules over group rings and classifying spaces for proper 
actions. We provide finite invariants for groups with torsion and a 
natural framework in which the classical homological--cohomological 
gap can be studied.

\vspace{0.1in}
\noindent
{\bf I. Uniform bounds for modules over $\mathbb{Z}G$.} One advantage 
of Gorenstein dimensions is that they carry meaningful information, 
even under the presence of torsion.  If the group $G$ contains non-trivial 
torsion, then both $\mathrm{hd}_\mathbb{Z}G$ and $\mathrm{cd}_\mathbb{Z}G$ 
are infinite. The Gorenstein dimensions, however, may remain finite and 
retain homological information, even when the ordinary integral dimensions 
fail to do so.

This phenomenon is not restricted to isolated examples. The $FP_\infty$ 
property is invariant under passage to finite-index subgroups and 
extensions by finite groups, whereas the Hirsch length of a virtually 
soluble group remains unchanged when passing to a subgroup of finite index. 
Consequently, if $H$ is a torsion-free soluble group of type $FP_\infty$ 
and $F$ is a finite group acting on $H$, then $G=H\rtimes F$ is virtually 
soluble of type $FP_\infty$ and satisfies $h(G) = h(H)$. Since the extension 
is split, $F$ embeds in $G$, so $G$ contains torsion. This simple observation 
produces an abundance of examples to which our results apply. For this class 
of groups, Proposition \ref{prop:v-dual} yields that the Gorenstein dimensions 
recover the Hirsch length, even when the ordinary integral dimensions fail to 
provide finite invariants.

\begin{Proposition}\label{prop:eg1}
Let $H$ be a torsion-free soluble group of type $FP_\infty$. If
$F$ is a finite group acting on $H$ and $G = H \rtimes F$, then
$\mathrm{Ghd}_\mathbb{Z}G=\mathrm{Gcd}_\mathbb{Z}G = h(G) < \infty$.
\end{Proposition}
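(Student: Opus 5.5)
We first check that $G$ is virtually soluble, virtually torsion-free and of type $FP_\infty$ over $\mathbb{Z}$. Since $H$ is normal of finite index $|F|$ in $G=H\rtimes F$ and $H$ is soluble, $G$ is virtually soluble. Since $H$ is torsion-free, $G$ is virtually torsion-free. Because $H$ is of type $FP_\infty$ over $\mathbb{Z}$ and $[G:H]<\infty$, the group $G$ is also of type $FP_\infty$; a projective resolution of $\mathbb{Z}$ over $\mathbb{Z}H$ by finitely generated modules induces up along the finite-index inclusion. Finally, the Hirsch length of a virtually soluble group is computed on any soluble subgroup of finite index, so $h(G)=h(H)$.

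Next we show $h(H)<\infty$. We would invoke \cite[Theorem]{K86}: a soluble group of type $FP_\infty$ satisfies $\mathrm{cd}_\mathbb{Z}H=\mathrm{hd}_\mathbb{Z}H<\infty$; this was already used in the proof of Proposition~\ref{prop:v-dual}. Combined with Bieri's theorem \cite[Theorem 7.10]{Bie} for the torsion-free soluble group $H$, this gives $h(H)=\mathrm{hd}_\mathbb{Z}H<\infty$. Alternatively, finiteness follows from Proposition~\ref{prop:v-dual} itself, once we know the common value there is finite.

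Now apply Proposition~\ref{prop:v-dual} to $G$. Condition (iii) holds, so condition (i) gives $\mathrm{Gcd}_\mathbb{Z}G=\mathrm{Ghd}_\mathbb{Z}G<\infty$. Theorem~\ref{thm:Ghd-Hir} gives $\mathrm{Ghd}_\mathbb{Z}G=h(G)$. Hence $\mathrm{Ghd}_\mathbb{Z}G=\mathrm{Gcd}_\mathbb{Z}G=h(G)<\infty$. One could instead use Theorem~\ref{thm:Ghd=Gcd-FP} directly for the equality of the two Gorenstein dimensions, since $G$ is of type $FP_\infty$; either route works.

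The only step needing care is the passage of $FP_\infty$ from $H$ to $G$ and the finiteness of $h$. Both are standard, so there is no substantial obstacle: the statement is essentially a direct corollary of Proposition~\ref{prop:v-dual} and Theorem~\ref{thm:Ghd-Hir}.
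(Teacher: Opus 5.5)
Your proposal is correct and follows the paper's own route. $G$ is virtually soluble of type $FP_\infty$ with $h(G)=h(H)$, so Proposition~\ref{prop:v-dual}, (iii)$\Rightarrow$(i), gives $\mathrm{Gcd}_\mathbb{Z}G=\mathrm{Ghd}_\mathbb{Z}G<\infty$, and Theorem~\ref{thm:Ghd-Hir} identifies this common value with $h(G)$. This also makes your separate finiteness argument for $h(H)$ unnecessary.

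One justification is wrong as stated. Inducing a finite-type $\mathbb{Z}H$-resolution of $\mathbb{Z}$ up to $G$ gives a resolution of $\mathbb{Z}[G/H]$, not of $\mathbb{Z}$. The standard argument uses restriction instead. Since $\mathbb{Z}G$ is a finitely generated free $\mathbb{Z}H$-module, a $\mathbb{Z}G$-module is finitely generated over $\mathbb{Z}G$ if and only if it is finitely generated over $\mathbb{Z}H$. Now build a $\mathbb{Z}G$-free resolution of $\mathbb{Z}$ with finitely generated terms, one step at a time. Each kernel remains of type $FP_\infty$ over $\mathbb{Z}H$, hence is finitely generated over $\mathbb{Z}G$, so the construction can continue (cf.\ \cite[Chapter VIII]{Bro}).
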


Although Gorenstein projective resolutions are difficult to describe
explicitly in general, the preceding equalities have consequences not
only for the trivial module but for arbitrary modules over the integral
group ring. Moreover, the Gorenstein global dimension and the Gorenstein
weak global dimension of the group ring are controlled by the Hirsch length.

\begin{Corollary}\label{cor:Gpd-Gfd}
Let $G$ be a virtually soluble group of type $FP_\infty$.
\begin{enumerate}
\item[(i)] For every $\mathbb{Z}G$-module $M$, $\mathrm{max}\{\mathrm{Gpd}_{\mathbb ZG}M, \mathrm{Gfd}_{\mathbb ZG}M\}\leq
h(G)+1.$
\item[(ii)] $h(G)\leq\mathrm{G.wgldim}\,\mathbb{Z}G
\leq\mathrm{G.gldim}\,\mathbb{Z}G\leq h(G)+1$.
\end{enumerate}
\end{Corollary}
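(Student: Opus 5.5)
We first pin down $\mathrm{Gcd}_\mathbb{Z}G$ and $\mathrm{Ghd}_\mathbb{Z}G$, and then use the finiteness of the Gorenstein global dimensions to bound every module. Since $G$ is of type $FP_\infty$ over $\mathbb{Z}$, it is finitely generated and hence countable. Theorem \ref{thm:Ghd=Gcd-FP} gives $\mathrm{Gcd}_\mathbb{Z}G=\mathrm{Ghd}_\mathbb{Z}G$. Theorem \ref{thm:Ghd-Hir} gives $\mathrm{Ghd}_\mathbb{Z}G=h(G)$. Theorem \ref{thm:Gcd-Hir} with $n=0$ gives $h(G)\leq \mathrm{Gcd}_\mathbb{Z}G\leq h(G)+1<\infty$; here $h(G)<\infty$ because $\mathrm{Ghd}_\mathbb{Q}G\leq \mathrm{Gcd}_\mathbb{Q}G\leq \mathrm{Gcd}_\mathbb{Z}G<\infty$ and Lemma \ref{lem:Hir.len} applies. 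In fact the first two facts already force $\mathrm{Gcd}_\mathbb{Z}G=h(G)$.

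For (i), the key step is the passage from the trivial module to arbitrary modules. Since $\mathrm{silp}\,\mathbb{Z}=\mathrm{spli}\,\mathbb{Z}=1$ and $\mathrm{Gcd}_\mathbb{Z}G<\infty$, the characterization recalled in Section 2 yields a $\mathbb{Z}$-split monomorphism $\mathbb{Z}\to A$. Here $A$ is $\mathbb{Z}$-projective with $\mathrm{pd}_{\mathbb{Z}G}A=\mathrm{Gcd}_\mathbb{Z}G$.

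Now fix a $\mathbb{Z}G$-module $M$ and take a surjection $P\to M$ from a $\mathbb{Z}$-free (indeed $\mathbb{Z}G$-free) module. For $\mathbb{Z}$-projective $M$, tensoring with $A$ over $\mathbb{Z}$ with diagonal action gives $\mathrm{pd}_{\mathbb{Z}G}(A\otimes M)\leq \mathrm{pd}_{\mathbb{Z}G}A$. The standard argument of \cite{ET25,R} then shows $\mathrm{Gpd}_{\mathbb{Z}G}M\leq \mathrm{Gcd}_\mathbb{Z}G+\mathrm{pd}_\mathbb{Z}M$, except that we must use $\mathrm{pd}_\mathbb{Z}M\leq 1$. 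This naively gives $h(G)+1$, and it is the expected bound.

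More precisely, we plan to argue as follows. Write $0\to K\to P\to M\to 0$ with $P$ free and $K$ a $\mathbb{Z}$-free module. For any $\mathbb{Z}$-free module $N$, the construction $N\to A\otimes N$ (iterated along a coresolution) shows that $N$ embeds in a module of projective dimension $\leq \mathrm{Gcd}_\mathbb{Z}G$. This gives $\mathrm{Gpd}_{\mathbb{Z}G}N\leq \mathrm{Gcd}_\mathbb{Z}G=h(G)$. Hence $\mathrm{Gpd}_{\mathbb{Z}G}M\leq h(G)+1$.

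The same reasoning with the $\mathbb{Z}$-pure map $\mathbb{Z}\to A$ and $\mathrm{fd}_{\mathbb{Z}G}A=\mathrm{Ghd}_\mathbb{Z}G=h(G)$ gives $\mathrm{Gfd}_{\mathbb{Z}G}M\leq h(G)+1$. Alternatively, this follows from the general inequality $\mathrm{Gfd}\leq \mathrm{Gpd}$, which holds over rings of finite Gorenstein global dimension.

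For (ii), taking the supremum over all $M$ in (i) gives $\mathrm{G.gldim}\,\mathbb{Z}G\leq h(G)+1$ and $\mathrm{G.wgldim}\,\mathbb{Z}G\leq h(G)+1$. The lower bound comes from the trivial module: $\mathrm{G.wgldim}\,\mathbb{Z}G\geq \mathrm{Gfd}_{\mathbb{Z}G}\mathbb{Z}=h(G)$.

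The middle inequality $\mathrm{G.wgldim}\leq\mathrm{G.gldim}$ is where care is needed, since $\mathrm{Gfd}_{\mathbb{Z}G}M\leq\mathrm{Gpd}_{\mathbb{Z}G}M$ fails in general. We would try to argue as follows. When $\mathrm{G.gldim}\,\mathbb{Z}G$ is finite, every Gorenstein projective module is a syzygy of an acyclic complex of projectives that is $\mathrm{Hom}$-acyclic. Since injective modules then have finite projective dimension over a ring of finite Gorenstein global dimension, the complex is also $I\otimes-$ acyclic. Thus Gorenstein projectives are PGF and hence Gorenstein flat, and the inequality follows.
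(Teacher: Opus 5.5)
Your argument follows the paper's route. You get $\mathrm{Gcd}_\mathbb{Z}G=\mathrm{Ghd}_\mathbb{Z}G=h(G)$ from Theorems \ref{thm:Ghd=Gcd-FP} and \ref{thm:Ghd-Hir}. You then bound every module by $h(G)+1$; the paper just quotes \cite[Corollary 1.5]{ET18} and \cite[Corollary 3.6]{RY} here, while you re-derive these bounds from the characteristic module $A$ and $\mathrm{pd}_\mathbb{Z}M\leq 1$. Finally you sandwich in (ii). The only real difference is the middle inequality. The paper cites \cite[Proposition 5.8]{RY}; you show directly that Gorenstein projectives are PGF once $\mathrm{G.gldim}\,\mathbb{Z}G<\infty$. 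That works, but note that $I\otimes_{\mathbb{Z}G}-$ involves \emph{right} injective modules. These have finite projective (hence flat) dimension here because $g\mapsto g^{-1}$ identifies right and left $\mathbb{Z}G$-modules.

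Two points need repair.

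First, your justification of $h(G)<\infty$ is circular. You use it to get $\mathrm{Gcd}_\mathbb{Z}G\leq h(G)+1<\infty$ from Theorem \ref{thm:Gcd-Hir}, and then use $\mathrm{Gcd}_\mathbb{Z}G<\infty$ to recover it via Lemma \ref{lem:Hir.len}. Drop Theorem \ref{thm:Gcd-Hir}, which is redundant anyway. Then either quote Kropholler's theorem that soluble groups of type $FP_\infty$ have finite Hirsch length (implicit in \cite[Theorem 4.1]{MP-N10}, as used in Section 5), or treat $h(G)=\infty$ as a trivial case. In that case (i) is vacuous and $\mathrm{G.wgldim}\,\mathbb{Z}G\geq\mathrm{Gfd}_{\mathbb{Z}G}\mathbb{Z}=\infty$. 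Also $\mathrm{G.gldim}\,\mathbb{Z}G$ cannot be finite, by your own middle-inequality argument.

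Second, for a $\mathbb{Z}$-free module $N$, embedding $N$ into one module of projective dimension $\leq n=\mathrm{Gcd}_\mathbb{Z}G$ does not bound $\mathrm{Gpd}_{\mathbb{Z}G}N$. What works is the full coresolution $0\to N\to A\otimes N\to A\otimes\bar A\otimes N\to\cdots$, with $\bar A=\mathrm{coker}(\mathbb{Z}\to A)$, all of whose terms have projective dimension $\leq n$. You must combine it with a uniform bound on the Gorenstein projective dimension of all $\mathbb{Z}G$-modules. That bound comes from the Section 2 characterization plus a direct-sum argument. Dimension shifting along the coresolution then gives $\mathrm{Ext}^i_{\mathbb{Z}G}(N,Q)=0$ for all $i>n$ and all projective $Q$, hence $\mathrm{Gpd}_{\mathbb{Z}G}N\leq n$.
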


\begin{proof}
Assertion (i) is immediate from \cite[Corollary~1.5]{ET18} and
\cite[Corollary 3.6]{RY}. Then, (ii) follows from the equality
$h(G) = \mathrm{Ghd}_\mathbb{Z}G$ of Theorem \ref{thm:Ghd-Hir},
the inequality of \cite[Proposition 5.8]{RY} and (i).
\end{proof}

\vspace{0.1in}
\noindent
{\bf II. Classifying spaces for proper actions.}
There is also a geometric interpretation of the above equalities. Recall
that Mart\'{\i}nez-P\'erez and Nucinkis \cite[Theorem 4.1]{MP-N10} proved
that a virtually soluble group $G$ of type $FP_\infty$ admits a finitely
dominated model for $\underline{E}G$ of dimension $h(G)$. Such a group is
virtually torsion-free and its virtual cohomological dimension is equal to
$h(G)$. Therefore, no model for $\underline{E}G$ can have dimension $< h(G)$.
It follows that the Bredon geometric dimension $\underline{\mathrm{gd}}\,G$
equals $h(G)$; consequently, for the Bredon cohomological dimension we also
have $\underline{\mathrm{cd}}\,G=h(G)$. Combining \cite[Theorem 4.1]{MP-N10}
with our results, we obtain the following string of equalities.

\begin{Corollary}\label{cor:gdG}
Let $G$ be a virtually soluble group of type $FP_\infty$. Then
\[\mathrm{Ghd}_\mathbb{Z}G=\mathrm{Gcd}_\mathbb{Z}G
=h(G)=\underline{\mathrm{cd}}\,G . \]
\end{Corollary}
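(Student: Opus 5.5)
The plan is to chain results already available in the paper with the Mart\'{\i}nez-P\'erez--Nucinkis theorem. First, since $G$ is virtually soluble of type $FP_\infty$ over $\mathbb{Z}$, Theorem \ref{thm:Ghd=Gcd-FP} (with $k=\mathbb{Z}$) gives $\mathrm{Gcd}_\mathbb{Z}G=\mathrm{Ghd}_\mathbb{Z}G$. Theorem \ref{thm:Ghd-Hir} gives $\mathrm{Ghd}_\mathbb{Z}G=h(G)$. It remains to identify $h(G)$ with $\underline{\mathrm{cd}}\,G$.

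For the last equality, Proposition \ref{prop:v-dual} (iii)$\Rightarrow$(ii) shows that $G$ is a virtual duality group. In particular $G$ is virtually torsion-free. Let $H\leq G$ be a torsion-free soluble subgroup of finite index. Then $H$ is of type $FP_\infty$, and by Kropholler's theorem \cite{K86} together with \cite[Theorem 7.10]{Bie}, we get $\mathrm{cd}_\mathbb{Z}H=\mathrm{hd}_\mathbb{Z}H=h(H)=h(G)$. Hence $\mathrm{vcd}_\mathbb{Z}G=h(G)<\infty$, and Proposition \ref{prop:vcd-vhd}(i) independently confirms $\mathrm{Gcd}_\mathbb{Z}G=h(G)$.

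Now \cite[Theorem 4.1]{MP-N10} supplies a model for $\underline{E}G$ of dimension $h(G)$, so $\underline{\mathrm{cd}}\,G\leq\underline{\mathrm{gd}}\,G\leq h(G)$. For the reverse inequality, I would use the standard bound $\mathrm{vcd}_\mathbb{Z}G\leq\underline{\mathrm{cd}}\,G$. It holds because a Bredon projective resolution of the constant functor, evaluated at the trivial subgroup and restricted to the torsion-free subgroup $H$, yields a $\mathbb{Z}H$-projective resolution of $\mathbb{Z}$ of the same length. Indeed, each free Bredon module $\mathbb{Z}[-,G/K]$ with $K$ finite evaluates to $\mathbb{Z}[G/K]$, which is a free $\mathbb{Z}H$-module since $H\cap gKg^{-1}=1$. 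Alternatively, one can cite $\mathrm{Gcd}_\mathbb{Z}G\leq\underline{\mathrm{cd}}\,G$ directly. This gives $h(G)=\mathrm{vcd}_\mathbb{Z}G\leq\underline{\mathrm{cd}}\,G$.

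I expect this lower bound $\underline{\mathrm{cd}}\,G\geq h(G)$ to be the only step needing care. The evaluation argument is routine, but one must check exactness at the trivial subgroup and the freeness over $H$ of permutation modules with finite stabilizers, both of which hold as sketched. Combining everything gives $\mathrm{Ghd}_\mathbb{Z}G=\mathrm{Gcd}_\mathbb{Z}G=h(G)=\underline{\mathrm{cd}}\,G$.
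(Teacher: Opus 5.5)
Your proposal is correct and follows essentially the paper's route. You use Theorem \ref{thm:Ghd=Gcd-FP} and Theorem \ref{thm:Ghd-Hir} for the Gorenstein equalities, and \cite[Theorem 4.1]{MP-N10} together with $\mathrm{vcd}_\mathbb{Z}G=h(G)$ for the Bredon dimension. Your explicit evaluation argument for $\mathrm{vcd}_\mathbb{Z}G\leq\underline{\mathrm{cd}}\,G$ usefully fills in a step that the paper glosses over when it deduces $\underline{\mathrm{cd}}\,G=h(G)$ from $\underline{\mathrm{gd}}\,G=h(G)$.
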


This provides another connection with Brown's conjecture.
Mart\'{\i}nez-P\'erez and Nucinkis have actually proved in
\cite[Corollary~5.2]{MP-N10} that Brown's conjecture holds
more generally for virtually torsion-free elementary amenable
groups.

\vspace{0.1in}
\noindent
{\bf III. Virtually abelian and crystallographic groups.}
We next give some representative families in which the aforementioned
conclusions can be explicitly seen. The simplest general family is
$G=F\times\mathbb{Z}^n$, where $F$ is a non-trivial finite group and
$n\geq0$.  The subgroup $\mathbb{Z}^n$ has finite index in $G$, so $G$
is virtually free abelian and of type $FP_\infty$. Since $h(G)=n$, our
results give $\mathrm{Ghd}_\mathbb{Z}G=\mathrm{Gcd}_\mathbb{Z}G=n$,
whereas $\mathrm{hd}_\mathbb{Z}G=\mathrm{cd}_\mathbb{Z}G=\infty$.

This family already contains several familiar examples. If $n=0$ and
$F=C_m$, we obtain a finite cyclic group, for which both Gorenstein
dimensions are zero.  If $n=1$ and $F=S_3$, then
$G = S_3 \times \mathbb{Z}$ is a virtually cyclic group with elements
of orders $2$ and $3$, for which
$\mathrm{Ghd}_\mathbb{Z}G = \mathrm{Gcd}_\mathbb{Z}G=1$. Letting $F$ be
any finite group, we conclude that torsion of considerable complexity
may occur without affecting the value of the Gorenstein dimensions.

A geometric variant is obtained from crystallographic groups.  Let
$C_2=\langle\sigma\rangle$ act on $\mathbb{Z}^n$ by
$\sigma(v)=-v$, and set $\Gamma_n=\mathbb{Z}^n\rtimes C_2$.
Then, $\Gamma_n$ is virtually free abelian, contains an involution
and has Hirsch length $n$.  Therefore, we have
$\mathrm{Ghd}_\mathbb{Z}\Gamma_n
=\mathrm{Gcd}_\mathbb{Z}\Gamma_n=n$.
The standard affine action of $\Gamma_n$ on $\mathbb R^n$, in which
$\mathbb{Z}^n$ acts by translations and $\sigma$ acts by
$x\mapsto -x$, is proper and cocompact, providing thus a model for
$\underline{E}\Gamma_n$. In this case, the equality
$\underline{\mathrm{gd}}\,\Gamma_n=n$ is geometrically visible.

More generally, if $F$ is a finite subgroup of
$\mathrm{GL}_n(\mathbb{Z})$, then the split extension
$\mathbb{Z}^n\rtimes F$ is virtually free abelian of type
$FP_\infty$ and has Hirsch length $n$.  Since $F$ embeds in
the semidirect product, these groups provide a large family
of crystallographic examples with torsion and every positive
Hirsch length.

\vspace{0.1in}
\noindent
{\bf IV. Non-polycyclic metabelian groups.}
The preceding examples are all virtually abelian.  The same phenomenon
occurs for soluble groups, which are not virtually polycyclic. Let
$n\geq 2$ and consider the soluble Baumslag--Solitar group \cite{BS}
\[H_n=BS(1,n)=\langle a,t\mid tat^{-1}=a^n\rangle.\]
It has the semidirect-product description
$H_n\cong\mathbb{Z}[1/n]\rtimes\mathbb{Z}$, where the generator of the
infinite cyclic factor acts on $\mathbb{Z}[1/n]$ as multiplication by 
$n$.  In particular, $H_n$ is torsion-free and metabelian. Its ascending 
HNN decomposition shows that it is constructible and hence of type 
$FP_\infty$; cf.\ \cite[Theorem]{K86}. Its Hirsch length is $h(H_n)=2$.

There is an automorphism $\alpha$ of $H_n$ of order two defined by
$\alpha(a)=a^{-1}$ and $\alpha(t)=t$.  Form the split extension
$G_n=H_n\rtimes_\alpha C_2 =\langle a,t,s\mid tat^{-1}=a^n,\,
s^2=1,\,sas^{-1}=a^{-1},\,sts^{-1}=t\rangle$.
The subgroup $H_n$ has index two in $G_n$, while $s$ is an involution.
Thus $G_n$ is a soluble group of type $FP_\infty$ with torsion and
$h(G_n)=2$.  Consequently,
$\mathrm{Ghd}_\mathbb{Z}G_n=\mathrm{Gcd}_\mathbb{Z}G_n=2$.
Moreover, every $\mathbb{Z}G_n$-module $M$ satisfies
$\mathrm{Gpd}_{\mathbb ZG_n}M\leq3$ and
$\mathrm{Gfd}_{\mathbb ZG_n}M\leq3$.

The same example admits a concrete affine description.  The group
$G_n$ may be identified with the group of transformations
$x \mapsto \varepsilon n^k x+b$, where $\varepsilon \in \{1,-1\}$,
$k\in\mathbb{Z}$, and $b\in\mathbb{Z}[1/n]$.  The subgroup with
$\varepsilon=1$ is $BS(1,n)$, while $x\mapsto -x$ is the involution.
This realization makes transparent the passage from a torsion-free
soluble group of type $FP_\infty$ to a finite extension with torsion,
without changing the Hirsch length or the Gorenstein dimensions.

\vspace{0.1in}
\noindent
{\bf V. Groups outside the $FP_\infty$ class.}
The $FP_\infty$ hypothesis is indispensable in the discussion
above. The lamplighter groups provide a useful contrast. Let
$p$ be a prime and $L_p=C_p\wr\mathbb{Z}$; then,
$L_p=(\bigoplus_{i\in\mathbb{Z}}C_p)\rtimes\mathbb{Z}$, with
$\mathbb{Z}$ acting by shifting the factors.  The group $L_p$
is metabelian and has Hirsch length $h(L_p)=1$. It is a split
extension of an infinite abelian torsion group by $\mathbb{Z}$
and \cite[Theorem III.11]{BK} gives
$\mathrm{cd}_\mathbb{Q}L_p=h(L_p)+1=2$. Its torsion subgroup
$\bigoplus_{i\in\mathbb{Z}}C_p$ intersects non-trivially every
subgroup of finite index, so that $L_p$ is not virtually
torsion-free. Therefore, $L_p$ is not of type $FP_\infty$.
Theorems \ref{thm:Ghd-Hir} and \ref{thm:Gcd-Hir} on Gorenstein
homological and cohomological dimensions apply though,
showing that $\mathrm{Ghd}_\mathbb{Z}L_p = 1$ and
$\mathrm{Gcd}_\mathbb{Z}L_p = \mathrm{Gcd}_\mathbb{Q}L_p =
 \mathrm{cd}_\mathbb{Q}L_p = 2$.

This example highlights two things.  First, the
identity $\mathrm{Ghd}_\mathbb{Z}G=h(G)$ extends well beyond groups of
type $FP_\infty$ and remains valid in the presence of large locally finite
normal subgroups.  Second, without a finiteness condition such as
$FP_\infty$, the Gorenstein cohomological dimension need not agree with
the Gorenstein homological dimension.  The distinction between the two
Gorenstein invariants therefore reflects, in a precise way, the familiar
difference between homological and cohomological dimensions in the theory
of soluble groups.


\appendix

\section{Gorenstein modules of type $FP_{\infty}$}

\noindent
In this Appendix, we let $R$ be a unital associative ring and
consider, unless otherwise specified, only left $R$-modules.
We shall prove that an $R$-module of type $FP_{\infty}$ is
Gorenstein projective if and only if it is Gorenstein flat.
To that end, let ${\tt GProj}(R)$, ${\tt GFlat}(R)$ and
${\tt PGF}(R)$ be the classes of Gorenstein projective,
Gorenstein flat and projectively coresolved Gorenstein
flat $R$-modules, respectively. As noted in Section
\ref{sec:pre}, there are inclusions
\[ {\tt GProj}(R) \supseteq {\tt PGF}(R) \subseteq
   {\tt GFlat}(R). \]
We also denote by ${\tt GProj}_{\infty}(R)$,
${\tt GFlat}_{\infty}(R)$ and ${\tt PGF}_{\infty}(R)$
the subclasses of the classes above consisting of all
modules of type $FP_{\infty}$ therein. We shall prove
that the inclusions above induce equalities
\[ {\tt GProj}_{\infty}(R) = {\tt PGF}_{\infty}(R) =
    {\tt GFlat}_{\infty}(R) . \]

We begin with the case of Gorenstein projective modules
and record the following lemma, which is certainly well-known.

\begin{Lemma}\label{lem:f.g.GP}
Let $M$ be a finitely generated Gorenstein projective module.
\begin{enumerate}
\item[(i)]
There exists a short exact sequence of modules
\[ 0 \longrightarrow M \longrightarrow P \longrightarrow N
     \longrightarrow 0 , \]
where $P$ is finitely generated projective and $N$ is
finitely generated Gorenstein projective.
\item[(ii)]
There exists an exact sequence of modules
\[ 0 \longrightarrow M \longrightarrow P_{-1}
     \longrightarrow P_{-2} \longrightarrow \cdots , \]
where $P_{-1},P_{-2}, \ldots$ are finitely generated projective
and all kernels are finitely generated Gorenstein projective.
\end{enumerate}
\end{Lemma}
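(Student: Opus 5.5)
The plan is to exploit two facts together: $M$ embeds in a projective with Gorenstein projective cokernel, and a homomorphism from a finitely generated module into a projective factors through a finitely generated free module. Part (ii) will then follow from (i) by iteration.

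For (i), since $M$ is Gorenstein projective, it is a syzygy of a totally acyclic complex of projectives. Hence there is a short exact sequence $0 \to M \xrightarrow{\iota} Q \to N' \to 0$ with $Q$ projective and $N'$ Gorenstein projective.

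We may assume $Q$ is free: replacing $Q$ by $Q\oplus Q'$ with $Q\oplus Q'$ free replaces $N'$ by $N'\oplus Q'$, which is still Gorenstein projective. Since $M$ is finitely generated, $\iota(M)$ lies in a finitely generated free direct summand $P$ of $Q$, say $Q = P \oplus P'$. The map $\iota$ then factors as $M \to P \hookrightarrow Q$, and the first map $M\to P$ is injective.

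Its cokernel is $N := P/M$. The sequence $0\to N \to Q/M = N' \to P' \to 0$ is exact, and it splits because $P'$ is projective. Thus $N$ is a direct summand of $N'$. The class of Gorenstein projective modules is closed under direct summands (Holm's theorem), so $N$ is Gorenstein projective. Finally, $N$ is finitely generated, being a quotient of $P$.

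This gives $0\to M\to P\to N\to 0$ with $P$ finitely generated projective (indeed free) and $N$ finitely generated Gorenstein projective, as required.

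For (ii), apply (i) to $M$ to get $P_{-1}$ and $N_1$. Then apply (i) to the finitely generated Gorenstein projective module $N_1$ to obtain $0\to N_1\to P_{-2}\to N_2\to 0$, and continue inductively. Splicing these short exact sequences yields the exact sequence $0\to M\to P_{-1}\to P_{-2}\to\cdots$. Its kernels are $M, N_1, N_2,\ldots$, all of which are finitely generated Gorenstein projective.

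The only step that is not purely formal is the closure of Gorenstein projectives under direct summands. I will cite it as known rather than reprove it. Alternatively, one can avoid it by noting that $N$ has finite projective-type resolution data; but citing Holm's result is cleanest.
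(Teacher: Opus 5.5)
Your proposal is correct and follows the paper's proof step by step. You reduce to a free module (by adding a complement $Q'$, where the paper adds disk complexes), factor $\iota$ through a finitely generated free summand, observe that the new cokernel is a direct summand of the old one and invoke Holm's closure of Gorenstein projectives under direct summands, and then iterate for (ii). Your closing remark about avoiding Holm's theorem via ``finite projective-type resolution data'' is too vague to count as an argument, but it is not needed.
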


\begin{proof}
(i) The module $M$ is a kernel of a totally acyclic complex
of projective modules. By adding suitable disk complexes to
the latter, we may assume that the projective modules involved
are actually free. Hence, there is a set $I$ and a monomorphism
$\iota : M \longrightarrow R^{(I)}$, whose cokernel is Gorenstein
projective. Since $M$ is finitely generated, there is a finite
subset $I_0 \subseteq I$, such that $\iota$ factors through the
inclusion $R^{(I_0)} \hookrightarrow R^{(I)}$ by means of
$\iota_0 : M \longrightarrow R^{(I_0)}$. Since
$\mathrm{coker} \, \iota = \mathrm{coker} \, \iota_0 \oplus
 R^{(I \setminus I_0)}$,
the closure of the class ${\tt GProj}(R)$ under direct summands
(cf.\ \cite[Theorem 2.5]{H}) implies that
$\mathrm{coker} \, \iota_0 \in {\tt GProj}(R)$. Then, the short
exact sequence
\[ 0 \longrightarrow M
     \stackrel{\iota_0}{\longrightarrow} R^{(I_0)}
     \longrightarrow \mathrm{coker} \, \iota_0
     \longrightarrow 0 \]
is of the required type.

(ii) The construction of the exact sequence is performed by an
iterated application of (i).
\end{proof}

We recall that a class of modules is called definable if it is 
closed under products, direct limits and pure submodules. For 
any module $M$, we denote by  $\langle M \rangle$ the smallest
definable class that contains $M$; in other words,
$\langle M \rangle$ is the intersection of all definable classes 
containing $M$. We call $\langle M \rangle$ the definable closure 
of $M$.

\begin{Proposition}\label{prop:GP=PGF}
If $M$ is a Gorenstein projective module of type $FP_{\infty}$,
then $M$ is projectively coresolved Gorenstein flat. In other
words, there is an equality
${\tt GProj}_{\infty}(R) = {\tt PGF}_{\infty}(R)$.
\end{Proposition}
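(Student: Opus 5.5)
The plan is to build, for a Gorenstein projective module $M$ of type $FP_\infty$, a complete resolution of $M$ by \emph{finitely generated} projective modules, and then to check that this complex stays acyclic after applying $I\otimes_R-$ for every injective right module $I$.

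\textbf{Step 1: the complete resolution.} Since $M$ is of type $FP_\infty$, it has a projective resolution $\cdots\to P_1\to P_0\to M\to 0$ with every $P_i$ finitely generated projective. Lemma~\ref{lem:f.g.GP}(ii) gives a coresolution
\[ 0\to M\to P_{-1}\to P_{-2}\to\cdots \]
with finitely generated projective terms and finitely generated Gorenstein projective kernels. Splicing the two gives an acyclic complex $\mathbf{P}$ of finitely generated projectives with $M$ as a syzygy.

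\textbf{Step 2: $\mathbf{P}$ is totally acyclic.} All syzygies of $\mathbf{P}$ are Gorenstein projective. For the kernels in the coresolution this is given by Lemma~\ref{lem:f.g.GP}. For the syzygies in the left part, note that ${\tt GProj}(R)$ is closed under kernels of epimorphisms (Holm). Hence $\mathrm{Ext}^{\geq1}_R(K,Q)=0$ for every syzygy $K$ and every projective $Q$, and $\mathrm{Hom}_R(\mathbf{P},Q)$ is acyclic. In particular this holds for $Q=R$, so the dual complex $\mathbf{P}^*=\mathrm{Hom}_R(\mathbf{P},R)$ of finitely generated projective right modules is acyclic.

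\textbf{Step 3: tensoring with injectives.} Since each $P_i$ is finitely generated projective, there is a natural isomorphism
\[ I\otimes_R P_i\cong\mathrm{Hom}_R(P_i^*,I) \]
of complexes, where $\mathrm{Hom}$ is taken over right modules and $P_i^*=\mathrm{Hom}_R(P_i,R)$. Thus $I\otimes_R\mathbf{P}\cong\mathrm{Hom}_R(\mathbf{P}^*,I)$. Because $\mathbf{P}^*$ is an acyclic complex and $I$ is injective, $\mathrm{Hom}_R(-,I)$ is exact and preserves acyclicity. So $I\otimes_R\mathbf{P}$ is acyclic, and $\mathbf{P}$ witnesses that $M$ is PGF.

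The reverse inclusion ${\tt PGF}_\infty(R)\subseteq{\tt GProj}_\infty(R)$ is \cite[Theorem 4.4]{SS}.

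The main obstacle is Step 1: arranging \emph{finitely generated} projective terms on both sides of $M$, since the tensor–Hom swap in Step 3 is available only for finitely generated projectives. The right side is exactly what Lemma~\ref{lem:f.g.GP}(ii) provides, and the left side is exactly the $FP_\infty$ hypothesis. The only remaining care is the Gorensteinness of the left syzygies, which Holm's closure property supplies.
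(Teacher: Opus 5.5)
Your proof is correct, and it differs from the paper's only in the final step. Steps 1--2, the construction of the totally acyclic complex $\mathbf{P}$ of finitely generated projectives with $M$ as a syzygy, are the same as in the paper.

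\textbf{The paper's route.} It considers the class $\mathcal{C}$ of modules $C$ for which $\mathrm{Hom}_R(\mathbf{P},C)$ is acyclic, and shows that $\mathcal{C}$ is definable:
\begin{itemize}
\item closure under products is immediate;
\item closure under direct limits uses $\mathrm{Hom}_R(\mathbf{P},R)\otimes_R C\cong\mathrm{Hom}_R(\mathbf{P},C)$;
\item closure under pure submodules uses Lemma~\ref{lem:pure}.
\end{itemize}
Hence $\mathcal{C}$ contains the definable closure $\langle R\rangle$, and the paper then invokes the characterization of PGF-modules in \cite[Corollary 4.5]{SS}.

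\textbf{Your route.} You verify the definition of a PGF-module directly. The natural isomorphism $I\otimes_R\mathbf{P}\cong\mathrm{Hom}_R(\mathbf{P}^*,I)$ reduces acyclicity of $I\otimes_R\mathbf{P}$ to acyclicity of $\mathbf{P}^*=\mathrm{Hom}_R(\mathbf{P},R)$, and exactness of $\mathrm{Hom}(-,I)$ finishes the argument.

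\textbf{Comparison.} Both arguments use finite generation at exactly one point, a tensor--Hom swap, and both use only the acyclicity of $\mathbf{P}^*$. Yours is shorter and self-contained: it needs neither Lemma~\ref{lem:pure} nor the definability machinery of \cite{SS}, only \cite[Theorem 4.4]{SS} for the easy reverse inclusion. The paper's version places the result inside the Hom-orthogonality characterization of PGF-modules against $\langle R\rangle$. It works entirely with left modules and Hom into definable classes, rather than passing to the dual complex of right modules.
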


\begin{proof}
Since $M$ is of type $FP_{\infty}$, it admits a projective
resolution
\[ \cdots \longrightarrow P_1 \longrightarrow P_0
   \longrightarrow M \longrightarrow 0 \]
consisting of finitely generated projective modules.
Since $M$ is Gorenstein projective and the class ${\tt GProj}(R)$
is projectively resolving by \cite[Theorem 2.5]{H}, all kernels
of that resolution are also Gorenstein projective. Splicing that
projective resolution of $M$ with the coresolution of Lemma A.1(ii),
we obtain an acyclic complex ${\bf P}$
\[ \cdots \longrightarrow P_1 \longrightarrow P_0
   \longrightarrow P_{-1} \longrightarrow P_{-2}
   \longrightarrow \cdots \]
consisting of finitely generated projective modules with
$M = Z_{-1}{\bf P}$ and all kernels $(Z_n{\bf P})_n$ Gorenstein
projective. It follows that ${\bf P}$ is a totally acyclic complex
of projective modules. Let $\mathcal{C}$ be the class of those
modules $C$ for which the complex of abelian groups
$\mathrm{Hom}_R({\bf P},C)$ is acyclic. Then, $\mathcal{C}$ contains
all projective modules; in particular, it contains the regular
module $R$. We shall prove that $\mathcal{C}$ is a definable class.
Having proved that, it will follow that $\mathcal{C}$ contains the
definable closure $\langle R \rangle$ of $R$, so that the complex
of abelian groups $\mathrm{Hom}_R({\bf P},C)$ is acyclic for any
$C \in \langle R \rangle$. In view of \cite[Corollary 4.5]{SS},
it will then follow that $M \in {\tt PGF}(R)$.

The class $\mathcal{C}$ is closed under products, since
$\mathrm{Hom}_R({\bf P}, -)$ commutes with products and any product
of acyclic complexes is acyclic. The acyclic complex of abelian
groups $\mathrm{Hom}_R({\bf P},R)$ is actually a complex of right
$R$-modules, by means of the right action of $R$ on $R$. Moreover,
the complex ${\bf P}$ consisting of finitely generated projective
modules, the natural map
\[ \mathrm{Hom}_R({\bf P},R) \otimes_RC \longrightarrow
   \mathrm{Hom}_R({\bf P},C) \]
is an isomorphism of complexes for any $R$-module $C$. It follows
that $\mathcal{C}$ is closed under direct limits, since
$\mathrm{Hom}_R({\bf P},R) \otimes_R \_\!\_$ commutes with direct
limits and any direct limit of acyclic complexes is acyclic.
Finally, the closure of $\mathcal{C}$ under pure submodules follows
by applying the following lemma to the complex $\mathrm{Hom}_R({\bf P},R)$.
\end{proof}

\begin{Lemma}\label{lem:pure}
Let ${\bf X}$ be an acyclic complex of right $R$-modules and consider
a left $R$-module $C$. If the complex of abelian groups ${\bf X} \otimes_RC$
is acyclic, then the complex ${\bf X} \otimes_RC'$ is acyclic for any pure
submodule $C' \subseteq C$.
\end{Lemma}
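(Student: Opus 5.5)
The plan is to use the purity of $C' \subseteq C$ in its strongest form: $C'$ is a pure submodule of $C$ if and only if $C/C'$ is a direct limit of finitely presented modules. Equivalently, there is a pure exact sequence $0\to C'\to C\to C''\to 0$, and tensoring with any right module keeps it exact. So we first tensor the complex ${\bf X}$ with this sequence. This gives a short exact sequence of complexes of abelian groups
\[ 0 \longrightarrow {\bf X}\otimes_R C' \longrightarrow {\bf X}\otimes_R C \longrightarrow {\bf X}\otimes_R C'' \longrightarrow 0 , \]
and it is exact degreewise precisely because the original sequence is pure. Since ${\bf X}\otimes_R C$ is acyclic by hypothesis, the long exact homology sequence yields $H_n({\bf X}\otimes_R C') \cong H_{n+1}({\bf X}\otimes_R C'')$. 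It therefore suffices to show that ${\bf X}\otimes_R C''$ is acyclic.

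The main obstacle is that $C''$ is an arbitrary module, so we need more information about it. I would avoid working with $C''$ directly and instead use the standard characterisation of purity: $C' \to C$ is pure if and only if it is a direct limit of split monomorphisms. Concretely, write $C''=\varinjlim C''_i$ with each $C''_i$ finitely presented. Pulling back the sequence along $C''_i\to C''$ gives sequences $0\to C'\to C_i\to C''_i\to 0$. Each of these splits, since the pure sequence has the lifting property with respect to finitely presented modules. Their direct limit is the original sequence, so $C=\varinjlim C_i$ with $C_i\cong C'\oplus C''_i$.

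This presentation does not immediately give acyclicity of ${\bf X}\otimes C_i$ from that of ${\bf X}\otimes C$, so the direct limit route alone is insufficient. The cleaner route is to run the argument the other way, using the pure-injective envelope. A pure monomorphism $C'\to C$ induces a split monomorphism $\mathrm{PE}(C')\to \mathrm{PE}(C)$ on pure-injective hulls, or more elementarily the double character dual $C'^{++}\to C^{++}$ is split injective. In the form I would use: the pure embedding $C'\to C$ composed with $C\to C^{++}$, where $(-)^+=\mathrm{Hom}_\mathbb{Z}(-,\mathbb{Q}/\mathbb{Z})$, is pure. Then $C'\to C'^{++}$ factors through $C^{++}$ up to a retraction, because $C'^{++}$ is pure-injective and $C'\to C$ is pure. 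So $C'^{++}$ is a direct summand of $C^{++}$.

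Now acyclicity transfers along three steps. First, ${\bf X}\otimes C$ acyclic gives $({\bf X}\otimes C)^+\cong \mathrm{Hom}_R(C,{\bf X}^+)$ acyclic, using that $\mathbb{Q}/\mathbb{Z}$ is injective and faithful. Second, by adjunction, $({\bf X}\otimes C)^{++}\cong {\bf X}\otimes C^{++}$ holds for acyclicity purposes, and here one must take care: this isomorphism is only valid when ${\bf X}$ consists of finitely presented modules. To avoid that assumption, I would instead apply the summand argument to $C^{+}$: since $C'\to C$ is pure, $C^+\to C'^+$ is a split epimorphism, so $\mathrm{Hom}_R(C',{\bf X}^+)\cong\mathrm{Hom}_\mathbb{Z}({\bf X}\otimes C',\mathbb{Q}/\mathbb{Z})$ is a direct summand of $({\bf X}\otimes C)^+$. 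The justification is that $\mathrm{Hom}_R(-,{\bf X}^+)$ applied to the split map $C'^{++}\to C^{++}$, restricted along the canonical maps, exhibits this summand. A summand of an acyclic complex is acyclic, and $\mathbb{Q}/\mathbb{Z}$ is a faithful injective cogenerator. Together these give that ${\bf X}\otimes_R C'$ is acyclic. The step I expect to need the most care is making the summand claim precise by naturality of the split maps in each degree; the acyclicity of ${\bf X}$ itself is not even needed.
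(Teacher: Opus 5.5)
Your final argument is correct, but it follows a genuinely different route from the paper. The paper works inside ${\bf X}$. Using the kernels $Z_n$ (this is where acyclicity of ${\bf X}$ is used) and right exactness of the tensor product, acyclicity of ${\bf X}\otimes_R C$ amounts to injectivity of $Z_n\otimes_R C\to X_n\otimes_R C$ for all $n$. Injectivity of $Z_n\otimes_R C'\to X_n\otimes_R C'$ then follows from a commutative square whose left vertical map $Z_n\otimes_R C'\to Z_n\otimes_R C$ is injective by the very definition of purity.

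You instead dualize. Purity makes $C^+\to C'^+$ a split epimorphism of right modules, which exhibits $({\bf X}\otimes_R C')^+$ as a direct summand of the acyclic complex $({\bf X}\otimes_R C)^+$, and faithful exactness of $(-)^+$ finishes. The paper's argument is shorter and uses nothing beyond the definition of purity. Yours needs the splitting criterion for purity and character duality. As you observe, however, it never uses acyclicity of ${\bf X}$, so it proves a slightly stronger statement. That extra generality is not needed in Proposition~\ref{prop:GP=PGF}, where ${\bf X}=\mathrm{Hom}_R({\bf P},R)$ is acyclic anyway.

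There are three points of execution.

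\emph{The summand step.} Justify it with the adjunction $({\bf X}\otimes_R D)^+\cong\mathrm{Hom}_R({\bf X},D^+)$ (homomorphisms of right modules), which is natural in $D$. A section $s\colon C'^+\to C^+$ then induces the chain-level section $s_*$ at once. In the form $\mathrm{Hom}_R(-,{\bf X}^+)$ that you wrote, the splitting of $C^+\to C'^+$ does not apply directly. Your detour through $C'^{++}\to C^{++}$ and restriction along $C\to C^{++}$ still needs a section of restriction along $C'\to C'^{++}$ that is natural in the degree. That section again comes from the same adjunction, so it is simpler to use the adjunction directly.

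\emph{The opening characterization.} Your claim that $C'$ is pure if and only if $C/C'$ is a direct limit of finitely presented modules is not correct: every module is such a direct limit. What is true is that pure exact sequences are exactly the direct limits of split exact ones.

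\emph{The direct-limit route.} You abandoned it too quickly. With $C=\varinjlim C_i$ and $C_i\cong C'\oplus C''_i$, the map $H_n({\bf X}\otimes_R C')\to H_n({\bf X}\otimes_R C)\cong\varinjlim_i H_n({\bf X}\otimes_R C_i)$ is a direct limit of split monomorphisms, hence injective. This already proves the lemma, again without using acyclicity of ${\bf X}$.
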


\begin{proof}
Let $(Z_n)_n$ be the kernels of ${\bf X}$. Then, the short exact sequence
of right $R$-modules
\[ 0 \longrightarrow Z_n \longrightarrow X_n
     \longrightarrow Z_{n-1} \longrightarrow 0 \]
induces a short exact sequence of abelian groups
\[ 0 \longrightarrow Z_n \otimes_RC
     \longrightarrow X_n \otimes_RC
     \longrightarrow Z_{n-1} \otimes_RC
     \longrightarrow 0 \]
for all $n$. We have to show that this is also the case if $C$
is replaced by any pure submodule $C'$ of it. In other words,
we have to show that the additive map
\[ Z_n \otimes_RC' \longrightarrow X_n \otimes_RC' \]
is injective for all $n$ and all such $C'$'s. This follows
from the commutativity of the square
\[\xymatrix{
Z_n \otimes_RC'\ar[r] \ar[d] &X_n \otimes_RC'\ar[d]\\
Z_n \otimes_RC \ar[r] & X_n \otimes_RC
}\]
since the left vertical arrow is injective (in fact, both
vertical arrows are injective), in view of the purity of
the embedding $C' \hookrightarrow C$.
\end{proof}

Having proved Proposition \ref{prop:GP=PGF}, we now switch to the case of
Gorenstein flat modules. The following lemma is the analogue
of Lemma \ref{lem:f.g.GP} in this setting. Recall that, given a module
class $\mathcal{C}$, a linear map $f : M \longrightarrow N$ is
called $\mathcal{C}$-injective if the additive map
\[f^* : \mathrm{Hom}_R(N,C) \longrightarrow \mathrm{Hom}_R(M,C)\]
is surjective for all $C \in \mathcal{C}$. Whenever a composition
$g \circ h$ of two linear maps is $\mathcal{C}$-injective, the
map $h$ is also $\mathcal{C}$-injective.

\begin{Lemma}\label{lem:seqs}
Let $M$ be a finitely presented Gorenstein flat module.
\begin{enumerate}
\item[(i)]
There exists a short exact sequence of modules
\[ 0 \longrightarrow M \longrightarrow P \longrightarrow N
     \longrightarrow 0 , \]
where $P$ is finitely generated projective and $N$ is
finitely presented Gorenstein flat.
\item[(ii)]
There exists an exact sequence of modules
\[ 0 \longrightarrow M \longrightarrow P_{-1}
     \longrightarrow P_{-2} \longrightarrow \cdots , \]
where $P_{-1},P_{-2}, \ldots$ are finitely generated projective
and all kernels are finitely presented Gorenstein flat.
\end{enumerate}
\end{Lemma}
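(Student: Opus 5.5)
Since $M$ is Gorenstein flat, the plan is to embed $M$ into a flat module with Gorenstein flat cokernel, replace that embedding by a finitely generated projective one, and then iterate.

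\textbf{Step 1: an embedding into a flat module.} By definition $M$ is a kernel of a totally acyclic complex of flat modules, so there is a short exact sequence $0\to M\stackrel{\iota}{\to}F\to N'\to 0$ with $F$ flat and $N'$ Gorenstein flat. I also want $\iota$ to be $\mathcal{F}$-injective, where $\mathcal{F}$ denotes the class of flat modules. My first attempt is the following reasoning. Since $N'$ is Gorenstein flat, $\mathrm{Ext}^1_R(N',C)=0$ for all cotorsion flat $C$. Every flat module is a pure submodule of a cotorsion flat module, namely its pure-injective envelope. Hence $\mathrm{Hom}_R(F,C)\to\mathrm{Hom}_R(M,C)$ is surjective for cotorsion flat $C$. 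For an arbitrary flat $C$, I would instead use that $\mathrm{Hom}_R(M,-)$ commutes with direct limits because $M$ is finitely presented. This second point is where I must be careful; see Step 4 for the plan on how to handle it.

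\textbf{Step 2: the key consequence.} The conclusion I aim for is that $\iota$ is $R$-injective, i.e. every map $M\to R$ extends along $\iota$. Since $M$ is finitely presented, Lazard's theorem writes $F=\varinjlim P_j$ with $P_j$ finitely generated free, and $\iota$ factors through some $P_j$ via a map $\iota_0: M\to P_j$. By the remark preceding the lemma, $R$-injectivity of $\iota$ implies that $\iota_0$ is also $R$-injective, hence $\mathcal{P}$-injective for all finitely generated projectives. Moreover $\iota_0$ is injective, since it is a factor of the injective map $\iota$.

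\textbf{Step 3: the cokernel.} Let $N=\mathrm{coker}\,\iota_0$. It is finitely presented as the cokernel of a map between finitely presented modules. To show that $N$ is Gorenstein flat I would use the standard characterization over arbitrary rings (\v{S}aroch--\v{S}t$\!$'$\!$ov\'{i}\v{c}ek): a module $X$ is Gorenstein flat iff $\mathrm{Tor}_1^R(I,X)=0$ for every injective right module $I$ and $X$ has a suitable flat coresolution, equivalently iff $X$ lies in the appropriate left Ext-orthogonal of cotorsion modules. For the Tor-vanishing, I would use that $M$ is Gorenstein flat and that $\iota_0$ is $R$-injective. Dualizing with $\mathrm{Hom}_R(-,R)$ gives a surjection $P_j^*\to M^*$. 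For finitely presented modules, $I\otimes_R X\cong\mathrm{Hom}_R(\mathrm{Hom}_R(X,R)\ldots)$-type isomorphisms should let me show that $I\otimes_R M\to I\otimes_R P_j$ is injective. An alternative route is to take $N\oplus$ a flat module as a cokernel related to $N'$ via a pushout, and then use closure of ${\tt GFlat}(R)$ under extensions and direct summands. Concretely, the pushout of $\iota$ along $\iota_0$ should exhibit $N$ as a direct summand of an extension of $N'$ by a flat module, modulo adjustments. With this, part (i) follows, and part (ii) follows by iterating (i) on $N$.

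\textbf{Step 4: the main obstacle.} I expect the main difficulty to be Step 1/Step 2: proving that $\iota$ extends maps $M\to R$ when $R$ is merely flat and not cotorsion. The plan is to embed $R$ purely into its pure-injective envelope $PE(R)$, which is cotorsion flat, extend a given map $M\to R$ to a map $F\to PE(R)$, and then use finite presentation of $M$ together with the purity of $R\subseteq PE(R)$ to correct the extension so that it lands in $R$. If this correction fails, I would instead work directly with the definable closure $\langle R\rangle$, following the method of Proposition~\ref{prop:GP=PGF}.
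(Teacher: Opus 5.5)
Your main line (Steps 1, 2 and 4) aims at an intermediate claim that is false. For the embedding $\iota\colon M\to F$ supplied by the definition, maps $M\to R$ need not extend along $\iota$. Take $R=M=\mathbb{Z}$ and let $\iota$ be the diagonal embedding $\mathbb{Z}\hookrightarrow\widehat{\mathbb{Z}}=\prod_p\mathbb{Z}_p$. The cokernel $N'=\widehat{\mathbb{Z}}/\mathbb{Z}$ is torsion-free: if $kx=m\in\mathbb{Z}$ with $k\neq 0$, then $m/k\in\bigcap_p\mathbb{Z}_{(p)}=\mathbb{Z}$ and $x=m/k$. Hence $N'$ is flat, and $0\to\mathbb{Z}\to\widehat{\mathbb{Z}}\to N'\to 0$ is itself a totally acyclic complex of flat modules having $M$ as a kernel.

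Yet $\mathrm{id}_{\mathbb{Z}}$ does not extend to a map $\widehat{\mathbb{Z}}\to\mathbb{Z}$. Otherwise $\widehat{\mathbb{Z}}\cong\mathbb{Z}\oplus N'$, with $N'$ non-zero and divisible (as $\widehat{\mathbb{Z}}=\mathbb{Z}+n\widehat{\mathbb{Z}}$ for all $n$), while $\widehat{\mathbb{Z}}$ is reduced. So no correction through the pure-injective envelope can succeed. The fallback via $\langle R\rangle$ cannot help either, since $R\in\langle R\rangle$.

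The first route of Step 3 is also off target. The left $\mathrm{Ext}^1$-orthogonal of the cotorsion modules is the class of flat modules, not Gorenstein flat ones. Moreover, the vanishing of $\mathrm{Tor}^R_1(I,N)$ does not by itself make $N$ Gorenstein flat. (That vanishing is immediate anyway: $I\otimes_R\iota=(I\otimes_R f)\circ(I\otimes_R\iota_0)$ is injective, where $f\colon P_j\to F$ is the canonical map.)

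What does work is the pushout route you mention only in passing, and it needs no $R$-injectivity at all. Let $Q=(F\oplus P_j)/\{(\iota(m),-\iota_0(m)) : m\in M\}$ be the pushout of $\iota$ and $\iota_0$. As both maps are monomorphisms, there are short exact sequences $0\to P_j\to Q\to N'\to 0$ and $0\to F\to Q\to N\to 0$. The second one splits: $[(x,p)]\mapsto x+f(p)$ is a well-defined retraction $Q\to F$, precisely because $\iota=f\circ\iota_0$. Thus $Q\cong F\oplus N$ is an extension of $N'$ by $P_j$. It is therefore Gorenstein flat by closure of ${\tt GFlat}(R)$ under extensions (\cite[Corollary 4.12]{SS}), and $N$ is Gorenstein flat as a direct summand. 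No ``adjustments'' are needed, and (ii) follows by iteration as you say.

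The paper argues at the level of $\mathrm{Ext}$ instead. Since its cokernel is Gorenstein flat, $\iota$ is automatically ${\tt GFlat}(R)^{\perp}$-injective, hence so is the factor $\jmath=\iota_0$. As the target of $\jmath$ is projective, $\mathrm{Ext}^1_R(\mathrm{coker}\,\jmath,C)=0$ for all $C\in{\tt GFlat}(R)^{\perp}$. The cotorsion pair $({\tt GFlat}(R),{\tt GFlat}(R)^{\perp})$ of \cite[Corollary 4.12]{SS} then concludes.

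The two arguments are the same idea in different clothing. The right class to test injectivity of $\iota$ against is ${\tt GFlat}(R)^{\perp}$, not $R$ or the flat modules. You should drop Steps 1, 2 and 4 and base the proof on the pushout.
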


\begin{proof}
(i) The module $M$ is a kernel of a totally acyclic complex
of flat modules. Hence, there is a flat module $F$ and a
monomorphism $\iota : M \longrightarrow F$, whose cokernel
is Gorenstein flat. The linear map $\iota$ factors through
a suitable finitely generated free (and hence projective)
module $P$; see, for example, \cite[Theorem 4.32]{L}. Hence,
there are linear maps $\jmath : M \longrightarrow P$ and
$f: P \longrightarrow F$, such that $\iota = f \circ \jmath$;
of course, $\jmath$ is a monomorphism. Since $\imath$ is a
monomorphism with Gorenstein flat cokernel, it is
${\tt GFlat}(R)^{\perp}$-injective. It follows that $\jmath$
is ${\tt GFlat}(R)^{\perp}$-injective as well. Since $\jmath$
is a monomorphism and $P$ is free, we conclude that
$\mathrm{Ext}^1_R(\mathrm{coker} \, \jmath,C)=0$ for any
$C \in {\tt GFlat}(R)^{\perp}$. In view of
\cite[Corollary 4.12]{SS}, it follows that
$\mathrm{coker} \, \jmath$ is Gorenstein flat. Hence, the short
exact sequence
\[ 0 \longrightarrow M
     \stackrel{\jmath}{\longrightarrow} P
     \longrightarrow \mathrm{coker} \, \jmath
     \longrightarrow 0 \]
is of the required type.

(ii) The construction of the exact sequence is performed by an
iterated application of (i).
\end{proof}

\begin{Proposition}\label{prop:GF=PGF}
If $M$ is a Gorenstein flat module of type $FP_{\infty}$, then
$M$ is projectively coresolved Gorenstein flat. In other words,
there is an equality ${\tt GFlat}_{\infty}(R) = {\tt PGF}_{\infty}(R)$.
\end{Proposition}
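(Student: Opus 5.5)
The argument follows the proof of Proposition \ref{prop:GP=PGF}, with Lemma \ref{lem:seqs} playing the role of Lemma \ref{lem:f.g.GP}. Since $M$ is of type $FP_\infty$, it has a projective resolution
\[ \cdots \longrightarrow P_1 \longrightarrow P_0 \longrightarrow M \longrightarrow 0 \]
by finitely generated projective modules. The class ${\tt GFlat}(R)$ is projectively resolving (by \cite{SS}, Gorenstein flat modules form the left half of a hereditary cotorsion pair and contain the projectives). Hence all kernels of this resolution are Gorenstein flat. By Lemma \ref{lem:seqs}(ii), $M$ also has a coresolution $0 \to M \to P_{-1} \to P_{-2} \to \cdots$ by finitely generated projectives, with finitely presented Gorenstein flat kernels. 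Splicing the two gives an acyclic complex ${\bf P}$ of finitely generated projective modules. It has $M = Z_{-1}{\bf P}$, and every kernel $Z_n{\bf P}$ is Gorenstein flat.

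The aim is to show that ${\bf P}$ remains acyclic after applying $I \otimes_R -$ for every injective right module $I$; then $M$ is PGF by definition. Each short exact sequence $0 \to Z_n \to P_n \to Z_{n-1} \to 0$ has Gorenstein flat end term $Z_{n-1}$, so $\mathrm{Tor}_1^R(I, Z_{n-1}) = 0$ for every injective right module $I$. This vanishing is standard: Gorenstein flat modules are $\mathrm{Tor}$-orthogonal to injectives, as seen by computing with the right half of a totally acyclic flat complex. Therefore $0 \to I\otimes_R Z_n \to I\otimes_R P_n \to I \otimes_R Z_{n-1} \to 0$ is exact for all $n$. Splicing these sequences shows that $I \otimes_R {\bf P}$ is acyclic. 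So ${\bf P}$ is an acyclic complex of projectives which stays acyclic under $I\otimes_R-$, and $M$ is one of its syzygies; hence $M \in {\tt PGF}(R)$. The reverse inclusion ${\tt PGF}_\infty(R) \subseteq {\tt GFlat}_\infty(R)$ is clear.

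The main obstacle is the input that ${\tt GFlat}(R)$ is closed under kernels of epimorphisms over an arbitrary ring. This is needed so that the kernels $Z_n{\bf P}$ for $n \geq 0$ are Gorenstein flat. I would cite \cite[Corollary 4.12]{SS}, as in the proof of Lemma \ref{lem:seqs}: there ${\tt GFlat}(R)$ is described as ${}^{\perp}({\tt GFlat}(R)^{\perp})$ inside the modules, and such a left $\mathrm{Ext}$-orthogonal class is resolving. The remaining steps are routine, because all terms are finitely generated projectives and all relevant kernels are Gorenstein flat.
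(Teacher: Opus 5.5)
Your proof is correct and follows the paper's argument: you splice the finitely generated projective resolution of $M$, whose kernels are Gorenstein flat by \cite[Corollary 4.12]{SS}, with the coresolution of Lemma \ref{lem:seqs}(ii), and then note that the resulting complex ${\bf P}$ of projectives stays acyclic under $I\otimes_R-$ because all its kernels are Gorenstein flat. Your justification of that last step via $\mathrm{Tor}_1^R(I,Z_{n-1}{\bf P})=0$ is detail the paper leaves implicit; a small correction is that this vanishing comes from the half of the totally acyclic flat complex that maps onto the module, since that half is a flat resolution of it.
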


\begin{proof}
Since $M$ is of type $FP_{\infty}$, it admits a projective
resolution
\[ \cdots \longrightarrow P_1 \longrightarrow P_0
   \longrightarrow M \longrightarrow 0 \]
consisting of finitely generated projective modules.
Since $M$ is Gorenstein flat and the class ${\tt GFlat}(R)$
is projectively resolving by \cite[Corollary 4.12]{SS}, all
kernels of that resolution are also Gorenstein flat. Splicing
that projective resolution of $M$ with the coresolution of
Lemma \ref{lem:seqs}(ii), we obtain an acyclic complex ${\bf P}$
\[ \cdots \longrightarrow P_1 \longrightarrow P_0
   \longrightarrow P_{-1} \longrightarrow P_{-2}
   \longrightarrow \cdots \]
consisting of finitely generated projective modules with
$M = Z_{-1}{\bf P}$ and all kernels $(Z_n{\bf P})_n$ Gorenstein
flat. It follows that ${\bf P}$ remains acyclic by applying the
functor $I \otimes_R -$ for any injective right $R$-module
$I$. Hence, all kernels of ${\bf P}$ are projectively coresolved
Gorenstein flat; in particular, $M \in {\tt PGF}(R)$.
\end{proof}

\begin{Corollary}\label{cor:GP=GF}
There are equalities
${\tt GProj}_{\infty}(R) = {\tt PGF}_{\infty}(R) =
 {\tt GFlat}_{\infty}(R)$.
\end{Corollary}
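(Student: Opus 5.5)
The corollary follows by combining Proposition \ref{prop:GP=PGF} and Proposition \ref{prop:GF=PGF}. We prove two inclusion chains, each closing up into equalities, and then read off the result.

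First, ${\tt GProj}_{\infty}(R) \subseteq {\tt PGF}_{\infty}(R)$ holds by Proposition \ref{prop:GP=PGF}. Conversely, ${\tt PGF}(R) \subseteq {\tt GProj}(R)$ by \cite[Theorem 4.4]{SS}, as recalled in Section \ref{sec:pre}. Intersecting both sides with the class of modules of type $FP_\infty$ gives ${\tt PGF}_{\infty}(R) \subseteq {\tt GProj}_{\infty}(R)$, so the first equality holds.

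Second, ${\tt GFlat}_{\infty}(R) \subseteq {\tt PGF}_{\infty}(R)$ holds by Proposition \ref{prop:GF=PGF}. The reverse inclusion is immediate from the definitions. A PGF-module is a syzygy of an acyclic complex of projective modules that stays acyclic under $I\otimes_R-$ for every injective right module $I$. Since projective modules are flat, such a complex is a totally acyclic complex of flat modules. Hence ${\tt PGF}(R)\subseteq{\tt GFlat}(R)$, and restricting to modules of type $FP_\infty$ gives ${\tt PGF}_{\infty}(R) \subseteq {\tt GFlat}_{\infty}(R)$.

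Putting the two equalities together yields ${\tt GProj}_{\infty}(R) = {\tt PGF}_{\infty}(R) = {\tt GFlat}_{\infty}(R)$. All of the substantive work sits in the two propositions, so there is no real obstacle here. The only point to check is that membership in the $FP_\infty$ subclasses is the same condition on both sides of each inclusion. This is clear, since being of type $FP_\infty$ is a property of the module alone and does not depend on which Gorenstein class it is viewed in.
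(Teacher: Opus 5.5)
Your proposal is correct and matches the paper's intended argument. The paper gives no separate proof of this corollary: it is the combination of Propositions \ref{prop:GP=PGF} and \ref{prop:GF=PGF} with the general inclusions ${\tt GProj}(R) \supseteq {\tt PGF}(R) \subseteq {\tt GFlat}(R)$ recorded in Section \ref{sec:pre} and at the start of the Appendix, which is precisely how you assemble it.
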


\begin{Corollary}\label{cor:GP-PGF-GF-dim}
If $M$ is a module of type $FP_{\infty}$, then
$\mathrm{Gpd}_RM = \mathrm{PGF\mbox{-}dim}_RM = \mathrm{Gfd}_RM$.
\end{Corollary}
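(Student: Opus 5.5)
Since $M$ is of type $FP_\infty$, I would fix once and for all a projective resolution
\[ \cdots \longrightarrow P_1 \longrightarrow P_0 \longrightarrow M \longrightarrow 0 \]
by finitely generated projective modules, and write $K_n$ for its $n$-th syzygy. Each $K_n$ is again of type $FP_\infty$, since it has a resolution by finitely generated projectives obtained by truncating the one above. The whole argument reduces the statement to Corollary~\ref{cor:GP=GF}, applied to a suitable syzygy $K_n$.

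The first step is the standard syzygy characterization of each of the three dimensions. For the Gorenstein projective dimension, $\mathrm{Gpd}_R M \leq n$ holds if and only if $K_n$ is Gorenstein projective; this is \cite{H}, valid for any choice of projective resolution because ${\tt GProj}(R)$ is projectively resolving and closed under direct summands. For the Gorenstein flat dimension, $\mathrm{Gfd}_R M\leq n$ holds if and only if $K_n$ is Gorenstein flat. Over an arbitrary ring I would justify this by citing \cite[Corollary 4.12]{SS}: there ${\tt GFlat}(R)$ is shown to be projectively resolving and closed under extensions and direct summands (indeed it is the left half of a hereditary cotorsion pair), and the usual dimension-shifting argument then gives the syzygy criterion. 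For the PGF-dimension, the same argument applies, since ${\tt PGF}(R)$ is likewise the left class of a hereditary complete cotorsion pair by \cite{SS}. This last point is the one I would need to check most carefully, because the paper has not yet defined PGF-dimension explicitly. I would take it to be the length of a shortest resolution by PGF-modules, and verify that the resolving property gives the syzygy criterion exactly as in the other two cases.

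With these criteria in hand, for each $n\ge 0$ the three conditions $K_n\in{\tt GProj}(R)$, $K_n\in{\tt PGF}(R)$ and $K_n\in{\tt GFlat}(R)$ are equivalent by Corollary~\ref{cor:GP=GF}, because $K_n$ is of type $FP_\infty$. Hence, for every $n$, we have $\mathrm{Gpd}_RM\le n$ if and only if $\mathrm{PGF\mbox{-}dim}_RM\le n$, if and only if $\mathrm{Gfd}_RM\le n$. This gives equality of the three dimensions, including the case where all are infinite. Alternatively, the inclusions ${\tt PGF}\subseteq{\tt GProj}$ and ${\tt PGF}\subseteq{\tt GFlat}$ give the inequalities $\mathrm{Gpd}\le\mathrm{PGF\mbox{-}dim}$ and $\mathrm{Gfd}\le\mathrm{PGF\mbox{-}dim}$ directly, so only the reverse inequalities need the syzygy argument.

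The main obstacle is the syzygy criterion for $\mathrm{Gfd}$ over a general ring, which classically required coherence. I would rely on the \v{S}aroch--\v{S}t$\!$'$\!$ov\'{i}\v{c}ek result that ${\tt GFlat}(R)$ is projectively resolving over any ring, which removes that hypothesis.
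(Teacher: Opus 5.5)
Your proposal is correct and is essentially the paper's own proof. You fix a resolution by finitely generated projectives, and you use that ${\tt GProj}(R)$, ${\tt PGF}(R)$ and ${\tt GFlat}(R)$ are projectively resolving and closed under direct summands (Holm; \v{S}aroch--\v{S}t$\!$'$\!$ov\'{i}\v{c}ek), so that each dimension is detected by the syzygies. You then apply Corollary~\ref{cor:GP=GF} to those syzygies, which are of type $FP_\infty$, exactly as the paper does.
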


\begin{proof}
Since the classes ${\tt GProj}(R)$, ${\tt PGF}(R)$ and
${\tt GFlat}(R)$ are projectively resolving and closed
under direct summands (cf.\ \cite[Theorem 2.5]{H},
\cite[Theorem 4.9 and Corollary 4.12]{SS}), the finiteness
of the relevant dimensions can be detected by the syzygy
modules in any projective resolution of $M$. Being of type
$FP_{\infty}$, the module $M$ admits a projective resolution
by finitely generated projective modules. We consider the
kernels $(\Omega_nM)_n$ of that particular projective resolution
and note that these are all modules of type $FP_{\infty}$.
Then, Corollary \ref{cor:GP=GF} implies that for all $n$ each one of the
following three conditions implies the other two:

(i) $\Omega^nM \in {\tt GProj}(R)$,

(ii) $\Omega^nM \in {\tt PGF}(R)$ and

(iii) $\Omega^nM \in {\tt GFlat}(R)$.
\newline
The equalities
$\mathrm{Gpd}_RM = \mathrm{PGF\mbox{-}dim}_RM = \mathrm{Gfd}_RM$ follow
readily from this.
\end{proof}

\vspace{0.1in}
\noindent
{\bf Acknowledgements.}
The authors are grateful to Peter H. Kropholler 
and Olympia Talelli for their interest in this work.
\vspace{0.1in}

\noindent
{\bf Funding.}
I.\ Emmanouil and K.\ Golfis were supported by the Hellenic Foundation
for Research and Innovation (H.F.R.I.) under the ``3rd Call for H.F.R.I.\
Research Projects to Support Faculty Members and Researchers'', project
number 24921. W.\ Ren was supported by the Natural Science Foundation of
Chongqing, China (No.\ CSTB2025NSCQ-GPX1014).

\bigskip

{\footnotesize \noindent Ioannis Emmanouil and Konstantinos Golfis,\\
Department of Mathematics, University of Athens, Athens 15784, Greece \\
E-mail: {\tt emmanoui$\symbol{64}$math.uoa.gr, golfisk$\symbol{64}$math.uoa.gr}}

\medskip

{\footnotesize \noindent Wei Ren,\\
 School of Mathematical Sciences, Chongqing Normal University, Chongqing 401331, PR China\\
 E-mail: {\tt wren$\symbol{64}$cqnu.edu.cn}}


\vspace{0.1in}

\begin{thebibliography}{99}

\bibitem{ABHS}
Asadollahi J., Bahlekeh A., Hajizamani A.,
Salarian S.: \emph{On certain homological invariants of groups},
J.\ Algebra {\bf 335} (2011), 18--35


\bibitem{BDT}
Bahlekeh, A., Dembegioti, F., Talelli, O.:
\emph{Gorenstein dimension and proper actions}, Bull.\ London Math.\
Soc.\ {\bf 41} (2009), 859--871

\bibitem{BS}
Baumslag, G., Solitar, D.: \emph{Some two-generator one-relator
non-Hopfian groups}, Bull.\ Amer.\ Math.\ Soc.\ {\bf 68}(3) (1962), 199--201


\bibitem{Bie}
Bieri, R.: Homological Dimension of Disrecte Groups,
2nd ed., Queen Mary College Mathmatical Notes, 1981

\bibitem{BK}
Bridson, M.R., Kropholler,  P.H.: \emph{Dimension of elementary amenable groups},
J./ Reine Angew./ Math./ {\bf 699} (2015), 217-–243

\bibitem{Bro}
Brown, K.S.: \emph{Cohomology of Groups}, Graduate
Texts in Mathematics 87, Springer, Berlin-Heidelberg-New
York, 1982

\bibitem{Buc}
Buchweitz, R.-O.:
\emph{Maximal Cohen--Macaulay Modules and Tate Cohomology},
Mathematical Surveys and Monographs, Vol. 262,
American Mathematical Society, Providence, RI, 2021

\bibitem{Chr}
Christensen, L.W.: \emph{Gorenstein Dimensions}, Lecture Notes in
Mathematics vol.1747, Berlin: Springer-Verlag, 2000

\bibitem{CK}
Cornick, J., Kropholler, P.H.: \emph{On complete
resolutions}, Topology Appl.\ {\bf 78} (1997), 235--250

\bibitem{DE}
Dalezios, G., Emmanouil, I.: \emph{Homological dimension
based on a class of Gorenstein flat modules},
C.R.\ Math.\ Acad.\ Sci.\ Paris {\bf 361} (2023), 1429--1448

\bibitem{ET18}
Emmanouil, I., Talelli, O.: \emph{Gorenstein dimension
and group cohomology with group ring coefficients}, J.\ London
Math.\ Soc.\ {\bf 97} (2018), 306--324

\bibitem{ET25}
Emmanouil, I., Talelli, O.: \emph{Characteristic modules
and  Gorenstein (co-)homological dimension of groups}, J.\ Pure Appl.\
Algebra {\bf 229} (2025), 107830

\bibitem{EJ} Enochs, E.E., Jenda, O.M.G.: \emph{Relative Homological
Algebra}, De Gruyter Expositions in Mathematics no. 30, New York:
Walter De Gruyter, 2000

\bibitem{FN}
Flores, R.J., Nucinkis, B.E.A.: \emph{On Bredon homology of
elementary amenable groups}, Proc.\ Amer.\ Math.\ Soc.\
{\bf 135}(1), (2007), 5--11

\bibitem{GG}
Gedrich, T.V., Gruenberg, K.W.: \emph{Complete
cohomological functors on groups},
Topology Appl.\ {\bf 25} (1987), 203--223

\bibitem{Hap}
Happel, D.:
On Gorenstein algebras,
in \emph{Representation Theory of Finite Groups and Finite-Dimensional Algebras},
Progr.\ Math.\ {\bf 95}, Birkh\"auser, Basel, 1991, pp.~389--404

\bibitem{HL}
Hillman, J.A., Linnell, P.A.:
\emph{Elementary amenable groups of finite Hirsch length are locally-finite by virtually-solvable},
J.\ Aust.\ Math.\ Soc.\ Series A {\bf 52} (1992), 237--241

\bibitem{H}
Holm, H.: \emph{Gorenstein homological dimensions}, J.\ Pure
Appl.\ Algebra {\bf 189} (2004), 167--193

\bibitem{KS}
Kaperonis I. Stergiopoulou D.D.: \emph{Finiteness criteria for
Gorenstein homological dimension and some invariants of groups},
Comm.\ Algebra, {\bf 53}(9) (2025), 3643--3661

\bibitem{K86}
Kropholler, P.H.: \emph{Cohomological
dimension of soluble groups}, J.\ Pure Appl.\ Algebra
{\bf 43} (1986), 281--287

\bibitem{K93}
Kropholler, P.H.: \emph{On groups of type
$\mathrm{FP}_{\infty}$}, J.\ Pure Appl.\ Algebra {\bf 90} (1993), 55--67

\bibitem{K-MP}
Kropholler, P.H., Mart\'{\i}nez-P\'{e}rez, C.:
\emph{Homological dimension of elementary amenable groups},
J.\ Reine Angew.\ Math.\ {\bf 766} (2020), 45--60

\bibitem{KM}
Kropholler, P.H., Mislin, G.: \emph{Groups acting on
finite dimensional spaces with finite stabilizers},
Comment.\ Math.\ Helv.\ {\bf 73} (1998), 122–136

\bibitem{L}
Lam, T.Y.: \emph{Lectures on Modules and Rings},
Graduate Texts in Mathematics {\bf 189}, Springer,
Berlin-Heidelberg-New York, 1998

\bibitem{Mal}
Mal'cev, A.I.: \emph{On certain classes of infinite solvable groups},
Amer.\ Math.\ Soc.\ Transl.\ {\bf 2}(2) (1956), 1--21

\bibitem{MP-N10}
Mart\'{\i}nez-P\'{e}rez, C., Nucinkis, B.E.A.:
\emph{Virtually soluble groups of type $FP_\infty$},
Comment.\ Math.\ Helv.\ {\bf 85} (2010), 135-–150

\bibitem{M}
Mislin, G.: \emph{On the classifying space for proper actions},
In: Aguad\'{e}, J., Broto, C., Casacuberta, C. (eds) Cohomological Methods
in Homotopy Theory, Progress in Mathematics, Birkh\"{a}user, Basel, vol 196, 2001

\bibitem{R}
Ren, W.: \emph{Gorenstein cohomological
dimension and stable categories for groups}, Comm.\ Algebra
{\bf 53}(5) (2025), 1866--1882

\bibitem{RY}
Ren, W., Yang, G.: \emph{Gorenstein homological
dimension and some invariants of groups}, J.\ Commut.\ Algebra
{\bf 17} (2025), 45–-62

\bibitem{SS}
\v{S}aroch, J., \v{S}t$\!$'$\!$ov\'{i}\v{c}ek, J.:
\emph{Singular compactness and definability for $\Sigma$-cotorsion
and Gorenstein modules}, Selecta Math.\ (N.S.) {\bf 26}, 2020,
Paper No.\ 23

\bibitem{Sim}
Simson, D.:
\emph{A remark on projective dimension of flat modules},
Math.\ Ann.\ {\bf 209} (1974), 181--182

\bibitem{Stal}
Stallings, J.R.: \emph{On torsion-free groups with infinitely many ends},
Ann.\ Math. {\bf 88}(2) (1968), 312--334

\bibitem{Stam}
Stammbach, U.: \emph{On the weak homological dimension
of the group algebra of solvable groups},  J.\ London
Math.\ Soc.\ {\bf 2} (1970), 567--570

\bibitem{Ster}
Stergiopoulou D.D.: \emph{Gorenstein homological dimension of group extensions},
arXiv:2608.08747

\bibitem{Swan}
Swan, R.G.: \emph{Groups of cohomological dimension one},
J.\ Algebra {\bf 12} (1969), 585--610

\bibitem{T14} Talelli, O.: \emph{On the Gorenstein and cohomological
dimension of groups},
Proc.\ Amer.\ Math.\ Soc.\ {\bf 142} (2014), 1175–-1180

\end{thebibliography}
\end{document}